\documentclass[a4paper]{article}
\usepackage{fullpage}
\usepackage{amsmath}
\usepackage{amssymb}
\usepackage{amsthm}
\usepackage{graphicx}
\usepackage{bbm}
\usepackage{enumerate}
\usepackage{microtype}
\usepackage{tocbibind}
\usepackage{xcolor}
\usepackage{tikz}
\usepackage[none]{hyphenat}

\definecolor{mycol}{RGB}{120, 20, 167} 
\usepackage[colorlinks=true,urlcolor=blue,linkcolor=blue,citecolor=mycol,plainpages=false,pdfpagelabels]{hyperref}
\allowdisplaybreaks

\newtheorem{theorem}{Theorem}[section]
\newtheorem{proposition}[theorem]{Proposition}
\newtheorem{lemma}[theorem]{Lemma}
\newtheorem{corollary}[theorem]{Corollary}
\newtheorem{remark}[theorem]{Remark}
\newtheorem{notation}[theorem]{Notation}
\newtheorem{definition}[theorem]{Definition}

\newcommand{\mex}{\operatorname{mex}}

\newcommand{\otype}{\operatorname{otyp}}
\newcommand{\ordinali}{\operatorname{On_2}}
\newcommand{\ordinal}{\operatorname{On}}

\newcommand{\boxedexp}[1]{^{%
		\tikz[baseline=(char.base)]{
			\node[
			draw, 
			rounded corners=2pt, 
			inner sep=1.5pt, 
			font=\normalfont 
			] (char) {$\scriptstyle #1$}; 
		}%
}}

\title{Some quadratically closed fields of nimbers}
\author{Lucia Rîșnoveanu \\[2mm]
\footnotesize Faculty of Mathematics and Computer Science, University of Bucharest,\\
\footnotesize Academiei 14, 010014 Bucharest, Romania \\[2mm]
\footnotesize Email: lucia.risnoveanu@yahoo.ro\\
}
\date{}

\begin{document}
	
	\maketitle
	\begin{abstract}
		In 1976, J. H. Conway introduced Nim arithmetic which establishes an algebraically closed field structure over the class of ordinals and proved that the first transcendental ordinal is $\omega^{\omega^\omega}$. The problem of finding the next transcendental ordinal is still open. Two years later, H. Lenstra proved that $\varepsilon_0$ is the next quadratically closed field ordinal. In this paper, we prove that $\{\varepsilon_\alpha \mid \alpha \leq \omega^{\omega^\omega} \}$ are the next quadratically closed field ordinals.
		
		\noindent {\em Mathematics Subject Classification 2020}: 03E10, 03F15, 12F05, 12F15, 12L99.
		
		\noindent {\em Keywords:} Ordinals, field theory, Nim arithmetic, quadratically closed fields.

	\end{abstract}

	\section{Introduction}
		
		Unlike standard ordinal arithmetic defined by Cantor, which lacks the properties of a commutative group or a field, Nim addition and multiplication—introduced by J. H. Conway in the book  \textit{On Numbers and Games}  \cite{onag}—endow ordinals with a surprisingly rich algebraic structure.
	
		Nim arithmetic forms an algebraically closed field structure on the class of ordinals. J. H. Conway proved that $ \omega^{\omega^{\omega}} $  is the algebraic closure of $2$. Thus, Nim arithmetic provides a constructive way to determine the algebraic closure of $ \mathbb{F}_2 $.

		J. H. Conway raised the question regarding the value of the next algebraically closed field ordinal, a problem that remains open to this day. If this next algebraically closed field were to be determined, then Nim arithmetic would provide a constructive model for the algebraic closure of $ \mathbb{F}_2(X) $.

		We will use results about standard ordinal arithmetic and Galois theory. We will also use results about Nim arithmetic, developed by J. H. Conway in \cite{onag} and explained in detail in \cite{siegel}. These results are written in Section~\ref{sec: prel}.
		
		In Subsection~\ref{subsec: 3.1} we prove that the algebraic closure of a field ordinal and its $n$-closure, for $n$ a positive integer, are all ordinals, and we show the relation between them. Also, we prove that $\omega_1$ is an algebraically closed field ordinal and that there is an uncountable infinity of countable transcendental ordinals. 
		In the rest of Section~\ref{sec: 3} we explain Lenstra's result \cite{nim-mult} that $\varepsilon_0$ is the quadratic closure of $\omega^{\omega^\omega}(\omega^{\omega^\omega})$. We give detailed proofs for completeness, as we later make serious use of some (ideas) of these lemmas.
		
		A. Siegel \cite{siegel} notes that $\varepsilon_1$ is the quadratic closure of $\varepsilon_0(\varepsilon_0)$ -- a result originally left as an exercise by H. Lenstra in \cite{nim-mult}. To date, no further work has been published concerning the quadratic closure of  $\varepsilon_1(\varepsilon_1)$, leaving it an open question whether this pattern still holds. In Section~\ref{sec: 4} we prove that this pattern still holds at least until $\varepsilon_{\omega^{\omega^{\omega}}}$. We also prove the identity $\varepsilon_{\omega^{\omega^{\omega}}} \boxedexp{3} = \omega^{\omega^{\omega^{\omega}+1}}  $.

		Throughout this paper, we will use the notation for Nim-sum and Nim-product used in \cite{siegel}.

	\section{Preliminaries} \label{sec: prel}

	\subsection{Standard ordinal arithmetic}
	
	We assume the basic properties of the usual operations on ordinals. In particular, for all $ \alpha, \ \beta, \ \gamma $ ordinals, $ (\alpha^\beta)^\gamma = \alpha^{\beta \times \gamma}$.
	
	\begin{lemma} \label{1+v=v}
		Let $v$ be an ordinal such that $ v \geq \omega $. Then $1+v = v$. Moreover, $n+v = v$ for any $n \in \omega$.
	\end{lemma}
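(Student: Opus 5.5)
The plan is to first treat the case $v = \omega$ and then reduce the general case to it by associativity of ordinal addition.

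For $v = \omega$ we use the definition of addition at limit ordinals: $1 + \omega = \sup_{k<\omega}(1+k)$. Since $1+k = k+1$ for finite $k$, this supremum is $\sup_{k<\omega}(k+1) = \omega$. The same computation gives $n + \omega = \sup_{k<\omega}(n+k) = \omega$ for any $n \in \omega$, because $n+k < \omega$ for every $k$ and the values $n+k$ are unbounded in $\omega$.

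For a general $v \geq \omega$, we invoke the subtraction property of ordinals. Since $\omega \leq v$, there is a unique ordinal $\delta$ with $v = \omega + \delta$. By associativity,
\[
n + v = n + (\omega + \delta) = (n + \omega) + \delta = \omega + \delta = v .
\]
The case $n=1$ is exactly the first claim, so the lemma follows.

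The argument has no real obstacle. The only point needing care is that the subtraction (difference) property of ordinals is a standard fact covered by the paper's assumption of basic ordinal arithmetic. If one prefers to avoid it, an alternative is transfinite induction on $v \geq \omega$:
\begin{itemize}
\item the base case $v = \omega$ is done above;
\item for a successor, $n + (\beta+1) = (n+\beta)+1 = \beta+1$;
\item for a limit $v > \omega$, $n + v = \sup_{\beta<v}(n+\beta) = \sup_{\omega\le\beta<v}\beta = v$, using the induction hypothesis on the cofinal set of $\beta$ with $\omega \le \beta < v$.
\end{itemize}
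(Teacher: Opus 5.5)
Your proof is correct, but your main argument takes a different route from the paper's. The paper proves $1+v=v$ by transfinite induction on $v\ge\omega$, working directly from the recursive clauses of ordinal addition: the base case is $\omega$, the successor step is $1+(u+1)=(1+u)+1$, and the limit step uses suprema. It then gets $n+v=v$ by induction on $n$. You instead settle only the case $v=\omega$, for all $n$ at once, and transfer it to arbitrary $v$ by writing $v=\omega+\delta$ and computing $n+(\omega+\delta)=(n+\omega)+\delta$.

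Your route is shorter and needs no transfinite induction. Its cost is reliance on the existence of the difference $\delta$ with $\omega+\delta=v$, together with associativity. Both are standard and covered by the paper's blanket assumption of basic ordinal arithmetic. The paper's route uses mainly the defining recursion, with associativity needed only implicitly in the final step from $1$ to $n$.

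Your fallback induction is essentially the paper's proof, done for all $n$ simultaneously. Your limit step, which restricts to the cofinal set of $\beta$ with $\omega\le\beta<v$, is stated more carefully than the paper's. The paper equates $\lim\{1+u\mid u<v\}$ with $\lim\{u\mid u<v\}$ even though $1+u\neq u$ for finite $u$; the two suprema still agree, so this is harmless.
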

	
	\begin{proof}
		We prove by induction on $v$. If $v = \omega$, then $1+v =\sup \{ 1+n \mid n \in \omega \} =v$. 
		If $v = u+1$, then $1+(u+1) =(1+ u)+1 = u+1=v$.
		If $v$ is a limit ordinal, then $1+v = \lim \{ 1+u  \mid u< v\} = \lim \{ u \mid u<v \} =v$. The fact that $n+v = v$ for any $n \in \omega$ follows immediately by induction on $n$.
	\end{proof}
	
	\begin{lemma} \label{ineg-ord-pt-claim}
		Let $\alpha$ be a limit ordinal and an ordinal $\beta < 2^{\alpha}$. Then $\beta + 2^{\alpha}=2^{\alpha} $.
	\end{lemma}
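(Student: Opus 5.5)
The plan is to use that $2^{\alpha}$ is a limit of smaller powers of $2$ when $\alpha$ is a limit ordinal, and that every $\beta<2^\alpha$ is absorbed by a later power of $2$ below $2^\alpha$. Concretely, $2^{\alpha}=\sup\{2^{\gamma}\mid \gamma<\alpha\}$ by the definition of ordinal exponentiation at limits. Since $\beta<2^\alpha$, there is some $\gamma_0<\alpha$ with $\beta<2^{\gamma_0}$. The inequality $\beta+2^\alpha\geq 2^\alpha$ is automatic, so the whole task is to show $\beta+2^\alpha\le 2^\alpha$.

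By continuity of ordinal addition in the right argument, $\beta+2^\alpha=\sup\{\beta+2^{\gamma}\mid \gamma<\alpha\}$, and it suffices to take the supremum over $\gamma_0\le\gamma<\alpha$. For such $\gamma$ we have $\beta<2^{\gamma_0}\le 2^{\gamma}$. Hence
\[
\beta+2^\gamma\le 2^\gamma+2^\gamma=2^\gamma\cdot 2=2^{\gamma+1}.
\]
Because $\alpha$ is a limit ordinal, $\gamma+1<\alpha$, so $2^{\gamma+1}\le 2^\alpha$. Taking the supremum gives $\beta+2^\alpha\le 2^\alpha$, which is the claim.

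The only slightly delicate step is the bound $\beta+2^\gamma\le 2^\gamma+2^\gamma$. This uses monotonicity of ordinal addition in the left argument (weak monotonicity is enough here), together with the identity $2^\gamma\cdot 2=2^{\gamma+1}$ from the exponent rules. Both are standard basic properties that the paper assumes. An alternative route would be to write $2^\alpha=2^{\gamma_0}\cdot 2^{\delta}$ with $\gamma_0+\delta=\alpha$ and show $\beta+2^{\gamma_0}\cdot\omega=2^{\gamma_0}\cdot\omega$, but the supremum argument above is more direct and I would use it.
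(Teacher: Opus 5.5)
Your proof is correct. It uses the same supremum-over-exponents idea as the paper, but it differs at exactly the step that matters.

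The paper first replaces $\beta$ by $2^{\beta}$. It then bounds $2^{\beta}+2^{\delta}\le 2^{\delta+1}$ for $\delta<\alpha$. That chain only closes when $\beta<\alpha$: for $\beta\ge\alpha$ one has $2^{\beta}+2^{\alpha}\ge 2^{\alpha}+2^{\alpha}=2^{\alpha+1}>2^{\alpha}$, so the paper's intermediate quantity already exceeds $2^{\alpha}$.

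That regime actually occurs in the paper. The lemma is later invoked in the proof of Theorem~\ref{claim-inch-patr} with $\beta$ equal to the limit exponent itself, to get $\beta+2^{\beta}=2^{\beta}$ when $\beta<2^{\beta}$.

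Your step avoids this. You take $\gamma_0<\alpha$ with $\beta<2^{\gamma_0}$, obtained directly from $\beta<2^{\alpha}=\sup_{\gamma<\alpha}2^{\gamma}$. Then $\beta+2^{\gamma}\le 2^{\gamma+1}$ holds for all $\gamma\ge\gamma_0$. So your argument covers the full hypothesis $\beta<2^{\alpha}$, including the case $\beta=\alpha$ that the paper needs. (Restricting the supremum to $\gamma\ge\gamma_0$ is fine because $\gamma\mapsto\beta+2^{\gamma}$ is nondecreasing.)
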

	
	\begin{proof}
		We have that $ 2^{\alpha} \leq \beta+ 2^{\alpha} \leq 2^{\beta} + 2^{\alpha} = \sup \{ 2^{\beta} + 2^{\delta} \mid \delta < \alpha \} \leq \sup \{  2^{\delta+1} \mid \delta < \alpha \} = 2^{\alpha} $.

	\end{proof}
	
	\begin{definition}
		An ordinal $\alpha$ is called an $\varepsilon$-number if $\alpha = 2^{\alpha}$.
	\end{definition}
	
	\begin{remark} \label{obs-epsilon-nr}
		An $\varepsilon$-number is a limit ordinal.
	\end{remark}
	
	\begin{proof} 
		Let $\varepsilon$ be an $\varepsilon$-number. Then $\varepsilon \geq \omega $. We denote $\varepsilon = \omega \times x + y$, where $x \geq 1$, $ y<\omega $, $x = 1+z$, $z \geq 0$. Then $ \varepsilon = 2^{\varepsilon} = \omega^x \times 2^y = \omega \times (\omega^z \times 2^y) $, so $y = 0$. Thus $\varepsilon = \omega \times x$ for an ordinal $x \geq 1$, so $\varepsilon$ is a limit ordinal.
	\end{proof}

	\begin{lemma} \label{lema-epsilon}
		Let $\varepsilon$ be an $\varepsilon$-number and a nonzero ordinal $v < \varepsilon$. Then $v \times \varepsilon = \varepsilon$.
	\end{lemma}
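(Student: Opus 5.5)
We prove that $v \times \varepsilon = \varepsilon$ by sandwiching $v\times\varepsilon$ between $\varepsilon$ and $\varepsilon$, using $\varepsilon = 2^{\varepsilon}$ and the continuity of ordinal exponentiation in the exponent. The lower bound is immediate: since $v \geq 1$, monotonicity of multiplication in the left factor gives $\varepsilon = 1 \times \varepsilon \leq v \times \varepsilon$.

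For the upper bound, first note that $v < 2^{v}$ for every ordinal $v$ (a standard fact, proved by a quick induction on $v$). Hence $v \le 2^{v}$, and therefore
\[
v \times \varepsilon \leq 2^{v} \times \varepsilon = 2^{v} \times 2^{\varepsilon} = 2^{v+\varepsilon},
\]
using $\varepsilon = 2^{\varepsilon}$ and the law $2^{a}\times 2^{b} = 2^{a+b}$. It remains to show $v + \varepsilon = \varepsilon$.

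This follows from Lemma~\ref{ineg-ord-pt-claim}. By Remark~\ref{obs-epsilon-nr}, $\varepsilon$ is a limit ordinal, and $v < \varepsilon = 2^{\varepsilon}$. Applying the lemma with $\alpha = \varepsilon$ and $\beta = v$ gives $v + 2^{\varepsilon} = 2^{\varepsilon}$, i.e.\ $v + \varepsilon = \varepsilon$. Consequently $v\times\varepsilon \le 2^{\varepsilon} = \varepsilon$, and combined with the lower bound, $v \times \varepsilon = \varepsilon$.

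The only point needing care is the rewriting $2^{v}\times 2^{\varepsilon} = 2^{v+\varepsilon}$, which is standard ordinal arithmetic. If one wants to avoid it, an alternative route is by continuity: $v \times \varepsilon = \sup_{\delta<\varepsilon} v\times 2^{\delta} \le \sup_{\delta<\varepsilon} 2^{v+\delta}$. Each $v+\delta$ is $<\varepsilon$ (apply Lemma~\ref{ineg-ord-pt-claim} again, or note $v+\delta < v+\varepsilon=\varepsilon$), so each term is $< 2^{\varepsilon} = \varepsilon$, and the supremum is again at most $\varepsilon$.
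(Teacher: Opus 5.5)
Your proof is correct and is essentially the paper's: both bound $v\times\varepsilon \le 2^{v}\times 2^{\varepsilon}=2^{v+\varepsilon}$ and use $v+\varepsilon=\varepsilon$. The paper re-derives $v+\varepsilon=\varepsilon$ inline by the same supremum computation that proves Lemma~\ref{ineg-ord-pt-claim}, whereas you cite that lemma, which is legitimate since $\varepsilon$ is a limit and $v<\varepsilon=2^{\varepsilon}$. One slip: $v<2^{v}$ is false in general; it fails exactly at the $\varepsilon$-numbers, e.g.\ $v=\omega$, and for $\varepsilon$ itself, contradicting the lemma's own hypothesis. You only use $v\le 2^{v}$, which is true, so the argument stands (and your ``alternative route'' still uses $2^{v}\times 2^{\delta}=2^{v+\delta}$, so it does not avoid the exponent law).
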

	
	\begin{proof}
		From Remark~\ref{obs-epsilon-nr}, $\varepsilon$ is a limit ordinal. Then $v+\varepsilon \leq 2^v + 2^{\varepsilon} = \sup \{ 2^v+2^{\delta} \mid \delta < \varepsilon \} \leq \sup \{ 2^{\delta +1} \mid \delta < \varepsilon \} = 2^{\varepsilon} = \varepsilon $. Thus $v \times \varepsilon \leq 2^v \times 2^{\varepsilon} = 2^{v + \varepsilon} \leq 2^{\varepsilon} = \varepsilon $, from which it follows that $ v \times \varepsilon = \varepsilon$.  
	\end{proof}
	
	\begin{proposition} \label{prop-eps-nr-indice-leg}
		We have that $\{ \varepsilon \mid 2^{\varepsilon} = \varepsilon \} = \{ \omega \} \cup \{ \varepsilon \mid \omega ^ \varepsilon = \varepsilon \}  $.
	\end{proposition}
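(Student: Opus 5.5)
We prove the two inclusions separately. The key fact is the identity $2^{\omega\times x} = (2^\omega)^x = \omega^x$, valid for every ordinal $x$ by the rule $(\alpha^\beta)^\gamma=\alpha^{\beta\times\gamma}$ from the preliminaries.

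For the inclusion $\supseteq$, first note that $2^\omega=\omega$, so $\omega$ belongs to the left-hand side. Now let $\varepsilon$ satisfy $\omega^\varepsilon=\varepsilon$. Then $\varepsilon\geq\omega$; indeed, $\varepsilon=0$ and finite $\varepsilon\geq1$ are impossible, since $\omega^0=1$ and $\omega^n>n$. We want to show $\omega\times\varepsilon=\varepsilon$, because then
\[
2^\varepsilon = 2^{\omega\times\varepsilon} = \omega^\varepsilon = \varepsilon .
\]
To get $\omega\times\varepsilon=\varepsilon$, write $\varepsilon=\omega^\varepsilon=\omega^{1+\varepsilon}$, using Lemma~\ref{1+v=v} since $\varepsilon\ge\omega$. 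Hence $\varepsilon=\omega^1\times\omega^\varepsilon=\omega\times\varepsilon$.

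For the inclusion $\subseteq$, let $2^\varepsilon=\varepsilon$ with $\varepsilon\neq\omega$. By the proof of Remark~\ref{obs-epsilon-nr}, $\varepsilon=\omega\times x$ for some $x\geq1$, so $\varepsilon=2^{\omega\times x}=\omega^x$. It remains to show $x=\varepsilon$, which gives $\omega^\varepsilon=\varepsilon$.

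We have $x\le\omega\times x=\varepsilon$, so it suffices to rule out $x<\varepsilon$. Suppose $x<\varepsilon$. Note first that $x\geq\omega$: if $x$ were finite, then $\varepsilon=\omega^x$ with $x\ge1$ finite, and $2^{\omega^x}=\omega^{\omega^{x-1}}$ differs from $\omega^x$ unless $\omega^{x-1}=x$, i.e.\ $x=1$, giving $\varepsilon=\omega$, which is excluded. Now apply Lemma~\ref{lema-epsilon} with $v=\omega<\varepsilon$; this is legitimate because $\varepsilon\neq\omega$ and $\varepsilon\ge\omega$. It gives $\omega\times\varepsilon=\varepsilon$, and then
\[
\omega^\varepsilon = 2^{\omega\times\varepsilon} = 2^\varepsilon = \varepsilon,
\]
directly. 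This actually bypasses the need to compare $x$ with $\varepsilon$ at all.

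So the cleanest route for $\subseteq$ is simply: $\varepsilon\neq\omega$ and $\varepsilon\ge\omega$ give $\omega<\varepsilon$; Lemma~\ref{lema-epsilon} gives $\omega\times\varepsilon=\varepsilon$; and then $\omega^\varepsilon=2^{\omega\times\varepsilon}=2^\varepsilon=\varepsilon$.

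The only point needing care is justifying $\varepsilon\geq\omega$ for an $\varepsilon$-number. This holds because $2^n>n$ for finite $n$, and the same bound gives it for solutions of $\omega^\varepsilon=\varepsilon$. The remaining steps are routine.
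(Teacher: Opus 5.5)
Your proof is correct and is essentially the paper's. For $\subseteq$ you apply Lemma~\ref{lema-epsilon} with $v=\omega<\varepsilon$ to get $\omega\times\varepsilon=\varepsilon$ and hence $\omega^\varepsilon=2^{\omega\times\varepsilon}=2^\varepsilon=\varepsilon$, exactly as the paper does. For $\supseteq$, which the paper dismisses as obvious (e.g.\ via $\varepsilon\le 2^\varepsilon\le\omega^\varepsilon=\varepsilon$), your derivation $\varepsilon=\omega^{1+\varepsilon}=\omega\times\varepsilon$ using Lemma~\ref{1+v=v} is a valid way to fill in the detail. The detour through $\varepsilon=\omega\times x=\omega^x$ and the case analysis on $x$ is never used, so you can simply delete it.
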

	
	\begin{proof}
		Obviously $  \{ \varepsilon \mid 2^{\varepsilon}= \varepsilon \} \supset \{ \omega \} \cup \{ \varepsilon \mid \omega ^ \varepsilon = \varepsilon \}    $. Let $\varepsilon > \omega   $ such that $ 2^{\varepsilon} = \varepsilon $. Then, from Lemma~\ref{lema-epsilon}, $\omega \times \varepsilon = \varepsilon$, so $ \omega ^ \varepsilon=  2^{\omega \times \varepsilon} = 2^\varepsilon = \varepsilon$.
	\end{proof}

	We enumerate the set $ \{ \varepsilon \mid \omega ^ \varepsilon = \varepsilon \}  $ as $ (\varepsilon_\alpha)_{\alpha \in \ordinal } $.

	\begin{proposition} \label{epsilon-0}
		Let $(a_n)_{n \in \omega}$ be the sequence with $a_0 = \omega$, $a_{n+1} = \omega^{a_n}$. Then $\varepsilon_0 = \sup \{ a_n \mid n \in \omega \}$.
	\end{proposition}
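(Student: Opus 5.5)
Let $s = \sup \{ a_n \mid n \in \omega \}$. The plan is to show two things: first, that $s$ is a fixed point of $\omega^{(\cdot)}$; second, that every fixed point $\varepsilon$ of $\omega^{(\cdot)}$ satisfies $\varepsilon \geq s$. Together these give $s = \min\{ \varepsilon \mid \omega^\varepsilon = \varepsilon\} = \varepsilon_0$, since the enumeration $(\varepsilon_\alpha)$ is increasing and $\varepsilon_0$ is the least element of this class.

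For the first part, I first observe that the sequence is strictly increasing. Indeed $a_0 = \omega < \omega^\omega = a_1$, and if $a_n < a_{n+1}$ then $a_{n+1} = \omega^{a_n} < \omega^{a_{n+1}} = a_{n+2}$, by strict monotonicity of $\beta \mapsto \omega^\beta$. Consequently $s$ is a limit ordinal and is not attained by any $a_n$. By continuity of ordinal exponentiation in the exponent at limit ordinals, we get $\omega^s = \sup\{ \omega^\delta \mid \delta < s \}$. Since the $a_n$ are cofinal in $s$ and $\omega^{(\cdot)}$ is monotone, this supremum equals $\sup\{\omega^{a_n} \mid n\in\omega\} = \sup\{a_{n+1} \mid n \in \omega\} = s$. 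Hence $\omega^s = s$. Note also that $s > \omega$, which is consistent with Proposition~\ref{prop-eps-nr-indice-leg}.

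For the second part, let $\varepsilon$ satisfy $\omega^\varepsilon = \varepsilon$. Then $\varepsilon \geq \omega$, because $\omega^\beta > \beta$ for all finite $\beta$ (for example $\omega^0 = 1 > 0$ and $\omega^n \geq \omega > n$). I show $a_n \leq \varepsilon$ by induction on $n$. The base case $a_0 = \omega \leq \varepsilon$ is what we just observed. For the inductive step, if $a_n \leq \varepsilon$ then $a_{n+1} = \omega^{a_n} \leq \omega^\varepsilon = \varepsilon$ by monotonicity. Therefore $s \leq \varepsilon$, and combining this with the first part gives $s = \varepsilon_0$.

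The only delicate step is the continuity argument in the first part, namely replacing the supremum over all $\delta < s$ by the supremum over the cofinal sequence $(a_n)$. This works because the map is monotone and $s$ is a limit ordinal that is not equal to any $a_n$, which is why the strict increase of the sequence has to be established first.
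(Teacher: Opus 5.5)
Your proposal is correct and follows essentially the same argument as the paper. You show that $\sup\{a_n \mid n\in\omega\}$ is a fixed point of $\omega^{(\cdot)}$ by continuity of exponentiation in the exponent. You then show by induction that $a_n \leq \varepsilon$ for any fixed point $\varepsilon$, with slightly more detail than the paper on strict increase and on why a fixed point is at least $\omega$.
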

	
	\begin{proof} 
		Obviously $(a_n)$ is an increasing sequence and $\omega^{\sup \{ a_n \mid n \in \omega \}} = \sup \{ \omega^{a_n} \mid n \in \omega \} = \sup \{ a_{n+1} \mid n \in \omega \} = \sup \{ a_n \mid n \in \omega \} $. Thus $ \varepsilon_0 \leq \sup \{ a_n \mid n \in \omega \} $. 
		
		We prove by induction on $n$ that $ \varepsilon_0 \geq a_n $. Since $\varepsilon_0 \geq 1$, it follows that $\varepsilon_0 = \omega^{\varepsilon_0} \geq  \omega$. If $ \varepsilon_0 \geq a_n $, then $\varepsilon_0 = \omega^{\varepsilon_0} \geq  \omega^{a_n} = a_{n+1}$. 
		
		Thus $\varepsilon_0 \geq \sup \{ a_n \mid n \in \omega \}$.
	\end{proof}
	
	\begin{proposition} \label{epsilon-0-tau}
		Let $\tau := \omega^{\omega^{\omega}}$. Let $(a_n)_{n \in \omega}$ be the sequence with $a_0 = \omega$, $a_{n+1} = \omega^{a_n}$. Let $(b_n)_{n \in \omega}$ be the sequence with $b_0 = \tau$, $b_{n+1} = \tau^{b_n}$. Then $\sup \{ a_n \mid n \in \omega \}  = \sup \{ b_n \mid n \in \omega \} $.
	\end{proposition}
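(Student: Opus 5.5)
By Proposition~\ref{epsilon-0}, the left-hand side is $\varepsilon_0$. So it suffices to show that $\sup \{ b_n \mid n \in \omega \} = \varepsilon_0$, and I will prove the two inequalities separately by interleaving the sequences $(a_n)$ and $(b_n)$.

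For $\sup \{ a_n \} \leq \sup \{ b_n \}$, I will prove $a_n \leq b_n$ by induction on $n$. The base case is $a_0 = \omega \leq \tau = b_0$. For the step, if $a_n \leq b_n$ then
\[
a_{n+1} = \omega^{a_n} \leq \omega^{b_n} \leq \tau^{b_n} = b_{n+1},
\]
using only monotonicity of exponentiation in the base and in the exponent.

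For $\sup \{ b_n \} \leq \sup \{ a_n \}$, the cleanest route is to show $b_n < \varepsilon_0$ for all $n$, again by induction.
\begin{itemize}
\item Base case: $b_0 = \tau = a_2 < \varepsilon_0$.
\item Inductive step: suppose $b_n < \varepsilon_0$. Then
\[
b_{n+1} = \tau^{b_n} = \left(\omega^{\omega^\omega}\right)^{b_n} = \omega^{\omega^\omega \times b_n},
\]
by the identity $(\alpha^\beta)^\gamma = \alpha^{\beta\times\gamma}$ recalled at the start of the section. It then suffices to show $\omega^\omega \times b_n < \varepsilon_0$, because $\varepsilon_0 = \omega^{\varepsilon_0}$ and exponentiation with base $\omega$ is strictly increasing, giving $b_{n+1} < \omega^{\varepsilon_0} = \varepsilon_0$.
\end{itemize}

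The main obstacle is closure of $\varepsilon_0$ under products, and I would get it directly from the sequence $(a_n)$. Pick $m$ with $b_n < a_m$ and $m \geq 2$, which is possible since $b_n < \varepsilon_0 = \sup a_m$ and the sequence is increasing. Then $\omega^\omega < a_m$ as well. We have $\omega^\omega \times b_n < a_m \times a_m$. Since $a_m = \omega^{a_{m-1}}$,
\[
a_m \times a_m = \omega^{a_{m-1} \times 2} \leq \omega^{a_{m-1}\times\omega} = \omega^{\omega^{1+a_{m-1}}}.
\]
By Lemma~\ref{1+v=v}, $1+a_{m-1} = a_{m-1}$, so this equals $\omega^{\omega^{a_{m-1}}} = \omega^{a_m} = a_{m+1} < \varepsilon_0$.

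Alternatively, one can invoke Lemma~\ref{lema-epsilon}-style reasoning: $v \times \varepsilon_0 = \varepsilon_0$ with $v = \omega^\omega$ suggests closure, but the explicit bound through $a_{m+1}$ is more direct.

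Combining the two inequalities yields $\sup \{ b_n \mid n \in \omega \} = \varepsilon_0 = \sup \{ a_n \mid n \in \omega \}$.
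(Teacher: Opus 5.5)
Your overall strategy is valid, but one equality in the closure step is false. You write $\omega^{a_{m-1}\times\omega}=\omega^{\omega^{1+a_{m-1}}}$, which claims $a_{m-1}\times\omega=\omega^{1+a_{m-1}}=a_m$. Since $m\ge 2$, we have $a_{m-1}=\omega^{a_{m-2}}$, so in fact $a_{m-1}\times\omega=\omega^{a_{m-2}}\times\omega=\omega^{a_{m-2}+1}$. For $m=2$ this is $\omega^{\omega+1}$, whereas $\omega^{1+a_1}=\omega^{\omega^\omega}$. It looks as though $a_{m-1}\times\omega$ was confused with $\omega\times\omega^{a_{m-1}}$.

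The bound you need is still true. From $a_{m-2}<a_{m-1}$ you get $a_{m-2}+1\le a_{m-1}$, hence $a_{m-1}\times\omega\le\omega^{a_{m-1}}=a_m$. Therefore $a_m\times a_m\le\omega^{a_{m-1}\times\omega}\le\omega^{a_m}=a_{m+1}$. Replace ``$=$'' by ``$\le$'' and add this justification. The rest of your argument is then correct: the choice of $m\ge 2$, the strict inequality $\omega^\omega\times b_n<\omega^\omega\times a_m\le a_m\times a_m$, the conclusion $b_{n+1}<\omega^{\varepsilon_0}=\varepsilon_0$, and the easy inequality $a_n\le b_n$.

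The paper does not pass through $\varepsilon_0$ at all. It proves the explicit interleaving $b_n\le a_{n+2}$ by induction. The key point there is the absorption $\omega^\omega\times\omega^{a_{n+1}}=\omega^{\omega+a_{n+1}}=\omega^{a_{n+1}}$, which comes from $\omega+a_{n+1}=a_{n+1}$ (via Lemma~\ref{1+v=v}).

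That same absorption would have made your squaring detour unnecessary. For $m\ge 2$ one has $\omega^\omega\times a_m=a_m$. So $b_n<a_m$ gives directly $\omega^\omega\times b_n<a_m$, and hence $b_{n+1}<a_{m+1}$.

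Your formulation, as closure of $\varepsilon_0$ under $x\mapsto\tau^x$, is slightly more conceptual but depends on Proposition~\ref{epsilon-0}. The paper's argument needs only the definitions of the two sequences and gives a sharper, index-explicit comparison.
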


	\begin{proof}
		Obviously $a_n \leq b_n$ for any $ n \in \omega $. Thus $\sup \{ a_n \mid n \in \omega \}  \leq \sup \{ b_n \mid n \in \omega \} $.
		
		We prove by induction that $b_n \leq a_{n+2}$ for any $n \in \omega$. We have $ b_0=a_2 $. If $b_n \leq a_{n+2}$, then $ b_{n+1} = \omega^{\omega^{\omega} \times b_n} \leq \omega^{\omega^{\omega} \times a_{n+2}} = \omega^{\omega^{\omega} \times \omega^{a_{n+1}}} = \omega^{\omega^{\omega+a_{n+1}} }$. But, from Lemma~\ref{1+v=v}, $ \omega + a_{n+1} = \omega + \omega^{a_n} = \omega + \omega^{1+a_n} = \omega \times (1+\omega^{a_n}) = \omega \times \omega^{a_n} = \omega ^{1+a_n}= \omega^{a_n} = a_{n+1}$, so $b_{n+1} \leq \omega^{\omega^{a_{n+1}} } = a_{n+3}$.
		
		Obviously $(a_n)_n$ and $ (b_n)_n $ are increasing, so $\sup \{ a_n \mid n \in \omega \}  \geq \sup \{ b_n \mid n \in \omega \} $.
	\end{proof}
	
	\subsection{Galois theory}
	
	\begin{lemma} \label{corp-num}
		Let $K$ be a countable field. Then $\overline{K}$ is countable.
	\end{lemma}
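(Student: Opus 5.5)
The plan is to write $\overline{K}$ as a countable union of finite sets, one for each nonzero polynomial over $K$.

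First, the polynomial ring $K[X]$ is countable. For each $n \in \omega$, the set of polynomials of degree at most $n$ is in bijection with $K^{n+1}$. A finite product of countable sets is countable, so each of these sets is countable. Then $K[X] = \bigcup_{n \in \omega} \{ f \mid \deg f \leq n \}$ is a countable union of countable sets, hence countable.

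Next, every element $a \in \overline{K}$ is algebraic over $K$, so there is a nonzero $f \in K[X]$ with $f(a) = 0$. Thus
\[
\overline{K} = \bigcup_{f \in K[X] \setminus \{0\}} Z(f),
\]
where $Z(f) = \{ a \in \overline{K} \mid f(a) = 0 \}$. A nonzero polynomial of degree $d$ over a field has at most $d$ roots in any field extension, so each $Z(f)$ is finite. Therefore $\overline{K}$ is a countable union of finite sets, and so it is countable. It is infinite, since an algebraically closed field is infinite, but that fact is not needed here.

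The only delicate point is the set-theoretic step "a countable union of countable sets is countable", which uses a weak form of choice. I will avoid it as follows. Since $K$ is countable, fix an injection $\iota \colon K \to \omega$. This gives an explicit injection from $K[X]$ into $\omega$, for instance by coding the finite coefficient sequence through prime factorization. In $\overline{K}$ itself, choose an enumeration of $K[X] \setminus \{0\}$ and, for each $f$, an ordering of its finitely many roots. Then send $a$ to the pair (least index of an $f$ with $f(a) = 0$, position of $a$ among the roots of that $f$). This map is an injection $\overline{K} \to \omega \times \omega$, which proves the claim.

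This is routine; I expect no real obstacle.
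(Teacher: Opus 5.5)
Your argument is correct and is the standard one. The paper gives no proof of its own and only refers to Aluffi, whose argument is this one: $K[X]$ is countable, and $\overline{K}$ is the union of the finite root sets $Z(f)$ over nonzero $f$. One caveat is that your last paragraph does not actually remove the use of choice, since fixing an ordering of the roots of every $f$ at once is itself an instance of countable choice for finite sets; this is harmless because the paper works in ZFC.
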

	
	\begin{proof}
		See \cite{allufi}.
	\end{proof}
	
	\begin{definition}
		Let $n \in \mathbb{N}^*$. We say that a field $F$ is $n$-closed if every non-zero polynomial in $F[X]$ of degree at most $n$ has at least one root in $F$.
	\end{definition}
	
	\begin{definition} 
		For a field $K$ and a positive integer $n$, we denote $\overline{K}^n = \bigcap_{F \text{ n-closed field} ,K \subset F \subset \overline{K}} F$.
		We call $\overline{K}^n$ the $n$-closure of $K$.
	\end{definition}
	
	\begin{proposition} \label{extindere-alg-corp-perfect}
	Let $K$ be a perfect field and $K \hookrightarrow L$ an algebraic extension. Then $L$ is a perfect field.
	\end{proposition}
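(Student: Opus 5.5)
The plan is to use the characterisation of perfect fields that is most convenient in the present context. A field $F$ is perfect if and only if either $\operatorname{char} F = 0$, or $\operatorname{char} F = p > 0$ and the Frobenius map $x \mapsto x^p$ is surjective on $F$. An equivalent formulation is that every algebraic extension of $F$ is separable. I will argue via separability, since this handles both characteristics at once. Characteristic $0$ is trivial, because every field of characteristic $0$ is perfect; so I concentrate on characteristic $p>0$.

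The key step is the following. Let $M/L$ be an algebraic extension; to show that $L$ is perfect, it suffices to show that $M/L$ is separable. Since $L/K$ is algebraic, the tower $K \subset L \subset M$ shows that $M/K$ is algebraic, by transitivity of algebraicity. Since $K$ is perfect, $M/K$ is separable. Now take $\alpha \in M$. Its minimal polynomial over $L$ divides its minimal polynomial over $K$, viewed in $L[X]$. The latter has distinct roots in $\overline{K}$, so the former does as well, and hence $\alpha$ is separable over $L$. Therefore every algebraic extension of $L$ is separable, so $L$ is perfect.

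Alternatively, one can give a direct Frobenius argument in characteristic $p$. Take $a \in L$. Then $K(a)/K$ is a finite separable extension, because $K$ is perfect. We have $K^p = K$, so $K(a)^p = K^p(a^p) = K(a^p)$. Since $a$ is separable over $K(a^p)$ and also purely inseparable over it (as a root of $X^p - a^p$), it follows that $a \in K(a^p)$. Hence $K(a) = K(a)^p$, which gives $a \in L^p$. Thus Frobenius is surjective on $L$.

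The only step needing care is the equivalence between the definition of "perfect" and the separability criterion, together with transitivity of algebraic extensions. Both are standard facts of field theory, which the paper treats as background (as with Lemma~\ref{corp-num}). So I would cite a standard reference such as \cite{allufi} for these two facts rather than reprove them.
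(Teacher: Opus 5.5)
Your proposal is correct, and both of your arguments work. The paper gives no proof of its own here; it simply refers to Aluffi's textbook. So you are not taking a different route so much as supplying what that citation covers.

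Of your two arguments, the first (separability of $M/K$, then the minimal polynomial over $L$ dividing the one over $K$) is the slicker one. It does, however, use the equivalence ``$F$ is perfect if and only if every algebraic extension of $F$ is separable'' in both directions: the easy direction for $K$, and the converse for $L$.

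The Frobenius argument is better matched to how the paper uses the notion. The paper works only in characteristic $2$, and ``perfect'' there means that every element has a square root. This is how perfectness is checked in Proposition~\ref{inch-perfecta-ord-corp-alg-inchis} and how it is used in Proposition~\ref{echiv-2-inchis}. With $p=2$, your second argument needs only the elementary fact that over a field with $K^2=K$ every irreducible polynomial is separable. Your step $a\in K(a^2)$, deduced from $a$ being both separable and purely inseparable over $K(a^2)$, is sound.

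Either version then justifies the later uses of the statement. These are Remark~\ref{obs-perfect-pana-la-transcendent2} and the perfectness of $\varepsilon_\alpha(\varepsilon_\alpha)$ invoked in Theorem~\ref{eps-corpuri-patratic-inchise}.
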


	\begin{proof}
		See \cite{allufi}.
	\end{proof}
	
	\begin{definition}
		Let $K \hookrightarrow L$ be an algebraic field extension. An element $x \in L$ is purely inseparable over $K$ if there exists $d \in \mathbb{N}^*$ such that $(X-x)^d$ is the minimal polynomial of $x$ over $K$. The extension $K \hookrightarrow L$ is purely inseparable if every element of $L$ is purely inseparable over $K$.  
	\end{definition}

	\begin{proposition} \label{pur-insep}
		Let $K$ be a field  of characteristic 2 and $K \hookrightarrow L$ an algebraic and purely inseparable field extension. Let $x \in L $. Then there exists $n \in \mathbb{N} $ such that the minimal polynomial of $x$ over $K$ is $(X-x)^{2^n}$. 
	\end{proposition}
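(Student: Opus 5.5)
By definition of purely inseparable, the minimal polynomial of $x$ over $K$ is $f=(X-x)^d$ for some $d\in\mathbb{N}^*$. The plan is to show that $d$ must be a power of $2$. We write $d=2^n m$ with $m$ odd and aim to prove $m=1$.

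The key step is the Frobenius identity in characteristic $2$. Since squaring is additive, $(X-x)^{2^n}=X^{2^n}-x^{2^n}$, and therefore
\[
f=(X-x)^{2^n m}=\left(X^{2^n}-x^{2^n}\right)^m .
\]
Set $y:=x^{2^n}$ and $g:=(X-y)^m\in L[X]$, so that $f(X)=g(X^{2^n})$.

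First we check that $g\in K[X]$. The coefficients of $f$ lie in $K$, and the coefficient of $X^{2^n k}$ in $f$ equals the coefficient of $X^k$ in $g$. So all coefficients of $g$ lie in $K$.

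Next we use that $m$ is odd. The coefficient of $X^{m-1}$ in $g$ is $-my$, which equals $y$ in characteristic $2$. Hence $y\in K$.

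Now $x$ is a root of $X^{2^n}-y\in K[X]$. This polynomial has degree $2^n$, and it is monic. Since $f$ is the minimal polynomial of $x$, $f$ divides it, so $d=2^n m\le 2^n$. This forces $m=1$, and therefore $f=(X-x)^{2^n}$.

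The main obstacle is the coefficient bookkeeping that transfers "coefficients in $K$" from $f$ to $g$. The way to handle it is to compare coefficients of $X^{2^n k}$, since $f$ involves only powers of $X^{2^n}$. An alternative would be a separability-degree argument, but the direct computation above is self-contained, so we use it.
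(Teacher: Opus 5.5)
Your proof is correct and follows the paper's argument. You write $d=2^n m$ with $m$ odd, use the Frobenius identity to get $(X^{2^n}-x^{2^n})^m\in K[X]$, read off $x^{2^n}\in K$ from the coefficient $m x^{2^n}$, and conclude $m=1$. The paper does the same, except it finishes by citing irreducibility of $(X^{2^r}-x^{2^r})^s$ rather than your degree bound from $f \mid X^{2^n}-x^{2^n}$; you also spell out the coefficient comparison that the paper leaves implicit.
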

	
	\begin{proof}
		Since $x$ is purely inseparable over $K$, it follows that there exists $d \in \mathbb{N} $ such that the minimal polynomial of $x$ over $K$ is $(X-x)^d$. Let $d = 2^r s$, where $d,s \in \mathbb{N}$ and $s$ is odd. Then $(X^{2^r}-x^{2^r})^s \in K[X]$, so $s x^{2^r} \in K$, from which it follows that $x^{2^{r}} \in K$. Since $(X^{2^r}-x^{2^r})^s \in K[X]$ is irreducible, it follows that $s=1$, so $d=2^r$. 
	\end{proof}
	
	\begin{proposition} \label{corolar-pur-insep}
		For a field $K$ of characteristic 2, we denote by $L$ a perfect closure of it. Then $K \hookrightarrow L$ is a purely inseparable field extension.
	\end{proposition}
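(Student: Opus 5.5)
To prove that $K \hookrightarrow L$ is purely inseparable, I will use the concrete description of a perfect closure in characteristic $2$. Fix an algebraic closure $\overline{K}$. Since $x \mapsto x^2$ is an injective ring homomorphism in characteristic $2$, every element of $\overline{K}$ has a unique square root in $\overline{K}$. Define
\[
K^{1/2^\infty} := \{ x \in \overline{K} \mid \exists\, n \in \mathbb{N},\ x^{2^n} \in K \}.
\]
The plan is to show three things: $K^{1/2^\infty}$ is a perfect field; any perfect field between $K$ and $\overline{K}$ contains it; and every element of it is purely inseparable over $K$. Then a perfect closure $L$, understood as the smallest perfect extension of $K$ inside $\overline{K}$ (equivalently, any extension isomorphic to it over $K$), equals $K^{1/2^\infty}$, and the statement follows.

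\textbf{Step 1: $K^{1/2^\infty}$ is a perfect field.} Take $x, y$ with $x^{2^n}, y^{2^m} \in K$ and put $N = \max(n,m)$. Then $(x+y)^{2^N} = x^{2^N} + y^{2^N} \in K$ by the Frobenius identity, $(xy)^{2^N} \in K$, and $(x^{-1})^{2^n} \in K$ for $x \neq 0$. So $K^{1/2^\infty}$ is a subfield. It is perfect because, for $x$ in it, the square root $\sqrt{x} \in \overline{K}$ satisfies $\sqrt{x}^{\,2^{n+1}} = x^{2^n} \in K$.

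\textbf{Step 2: minimality.} Let $F$ be a perfect field with $K \subset F \subset \overline{K}$, and take $x$ with $x^{2^n} \in K \subset F$. Taking square roots in $F$ a total of $n$ times, and using uniqueness of square roots in $\overline{K}$, gives $x \in F$. Hence $L = K^{1/2^\infty}$.

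\textbf{Step 3: pure inseparability.} Let $x \in L$ and choose $n$ with $a := x^{2^n} \in K$. Then $X^{2^n} - a = (X - x)^{2^n}$ in $\overline{K}[X]$. The minimal polynomial of $x$ over $K$ divides this polynomial, so it is monic and of the form $(X-x)^d$ for some $d \geq 1$. This is exactly the definition of $x$ being purely inseparable, and Proposition~\ref{pur-insep} then even yields that $d$ is a power of $2$.

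The only point needing care is Step 2 together with the identification of ``a perfect closure'' with this explicit subfield. If the paper instead means an arbitrary perfect algebraic extension minimal with respect to being perfect, I would first embed it into $\overline{K}$, which is possible since it is algebraic. I would then use Step 2 to see that its image contains $K^{1/2^\infty}$, and use minimality to get equality. Pure inseparability is preserved under this $K$-isomorphism, since minimal polynomials over $K$ are preserved.
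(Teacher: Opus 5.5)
Your proof is correct and takes essentially the same route as the paper. The paper also realizes the perfect closure as $\bigcup_n K_n$ with $K_n=\{x\in\overline{K}\mid x^{2^n}\in K\}$, identifies it as the smallest perfect subfield of $\overline{K}$ containing $K$, and deduces pure inseparability of $L$ from $x^{2^n}\in K$ via the $K$-isomorphism. Your version just supplies the details the paper leaves implicit: closure under the field operations, uniqueness of square roots for minimality, and the factorization $(X-x)^{2^n}$ constraining the minimal polynomial.
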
 
	
	\begin{proof}
		For every $n \in \mathbb{N}$, let $K_n = \{x \in \overline{K} \mid x^{2^n} \in K  \}$. Then $K_n$ is a subfield of $\overline{K}$ and $K_n \subset K_{n+1}$ for every $n \in \omega$. Let $F = \bigcup_{n \in \omega} K_n $. We have that $F$ is the smallest perfect subfield of $\overline{K}$ containing $K$, relative to the inclusion relation and so a perfect closure of $K$. Thus $L$ and $F$ are $K$-isomorphic. Since for every $x \in F $ there exists $n \in \omega $ such that $x^{2^n} \in K$, it follows that $K \hookrightarrow F$ is purely inseparable, hence $K \hookrightarrow L$ is a purely inseparable field extension.
	\end{proof}

	\subsection{Nim arithmetic}
	
	\begin{definition} 
		Let $S$ be a set of ordinals. The minimal excluded value of S is defined as:
		$$\mex(S) := \min \{ \alpha  \mid \alpha \notin S \}.$$
	\end{definition}

		In this article we will use the notation:
	$$ \{P_1(x), P_2(x), \dots , P_n(x) \mid x\} = \{P_1(x) \mid x \} \cup \{P_2(x) \mid x \} \cup \dots \cup \{P_n(x) \mid x \}.$$
	
	\begin{definition}
		Let $\alpha,\  \beta$ be two ordinals. Their Nim-sum and Nim-product are defined as: 
		$$\alpha \oplus \beta = \mex \{ \alpha' \oplus \beta, \alpha \oplus \beta' \mid \alpha' < \alpha, \beta' < \beta \};$$
		$$\alpha \otimes \beta = \mex \{ (\alpha' \otimes \beta) \oplus (\alpha \otimes \beta') \oplus (\alpha' \otimes \beta') \mid \alpha' < \alpha, \beta' < \beta \}.$$
	\end{definition}
	
	\begin{notation}
		For any ordinal $\alpha$ and positive integer $n$, $\alpha^{\boxedexp{n}} := \underbrace{\alpha \otimes \dots \otimes \alpha}_{ n \text{ times } \alpha}$.
	\end{notation}
	
	\begin{proposition}
		The class of ordinals with the Nim operations is a Field (i.e., a field whose domain is a proper Class) of characteristic 2, which is called $\ordinali$.
	\end{proposition}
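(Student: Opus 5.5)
We prove the field axioms for $(\ordinal,\oplus,\otimes)$ by transfinite induction on the arguments, following Conway's treatment in \cite{onag} (see also \cite{siegel}). Both operations are well defined by recursion on the pair $(\alpha,\beta)$ with the natural well-founded order, and commutativity is immediate from the symmetry of the definitions.

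\textbf{The additive group.} For $\oplus$ we show four things in turn:
\begin{enumerate}[(i)]
\item $\alpha\oplus 0=\alpha$, by induction, since the set in the $\mex$ is exactly $\{\alpha'\mid\alpha'<\alpha\}$;
\item cancellation: $\alpha\oplus\beta\neq\alpha\oplus\beta'$ for $\beta'<\beta$, directly from the $\mex$ definition;
\item associativity, by induction on $\alpha+\beta+\gamma$ (natural sum);
\item $\alpha\oplus\alpha=0$, by induction, since $\alpha'\oplus\alpha\neq0$ for $\alpha'<\alpha$ by cancellation.
\end{enumerate}
For associativity, every option of $(\alpha\oplus\beta)\oplus\gamma$ has the form $(\alpha\oplus\beta)'\oplus\gamma$ or $(\alpha\oplus\beta)\oplus\gamma'$. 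Using that every element below $\alpha\oplus\beta$ equals $\alpha'\oplus\beta$ or $\alpha\oplus\beta'$, the inductive hypothesis identifies the option set with that of $\alpha\oplus(\beta\oplus\gamma)$. We must also check that each value below the $\mex$ is attained, which is the standard "mex equals the set of all smaller values" argument. Together these give an abelian group of exponent $2$.

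\textbf{The ring structure.} For $\otimes$ we prove, by simultaneous induction:
\begin{enumerate}[(i)]
\item $\alpha\otimes1=\alpha$ and $\alpha\otimes0=0$;
\item distributivity $\alpha\otimes(\beta\oplus\gamma)=(\alpha\otimes\beta)\oplus(\alpha\otimes\gamma)$;
\item associativity.
\end{enumerate}
Each is an option-matching argument: expand both sides, use the inductive hypothesis on the smaller options, and observe that the two sides exclude the same values and realise all smaller ones. The key auxiliary fact is that every $\delta<\alpha\otimes\beta$ can be written as $(\alpha'\otimes\beta)\oplus(\alpha\otimes\beta')\oplus(\alpha'\otimes\beta')$. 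Characteristic $2$ follows from $1\oplus1=0$.

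\textbf{No zero divisors and inverses.} First we show $\alpha\otimes\beta\neq 0$ for $\alpha,\beta\neq0$: take $\alpha'=\beta'=0$, so the option $0$ is excluded. Then, to construct $\alpha^{-1}$ for $\alpha\neq0$, we use Conway's recursive definition:
$$\alpha^{-1}=\mex\Bigl\{\bigl(1\oplus(\alpha'\oplus\alpha)\otimes\beta\bigr)\otimes(\alpha')^{-1}\Bigm|0<\alpha'<\alpha,\ \beta \text{ an earlier option}\Bigr\}.$$
We show that $\alpha\otimes\alpha^{-1}=1$ by checking that the product's option set contains $0$ but not $1$.

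This inverse step is the main obstacle, since the recursion runs over the value being defined. As an alternative, we would use the fact that for $\alpha\ne0$ the map $x\mapsto\alpha\otimes x$ is injective on the class of ordinals (by no zero divisors and distributivity). We would then argue that its image contains $1$, by showing the image is downward closed in a suitable sense, or by restricting to a set-sized initial segment closed under the operations, such as $2^{2^\gamma}$. On that segment the field is finite-or-set-sized, and an injective map of the segment to itself is surjective once we know the segment is closed. If this closure requires the structure theory of later sections, we would simply cite \cite{onag,siegel}, as the paper does for the other Nim results.
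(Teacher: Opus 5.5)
Your outline follows Conway's argument, which is what the paper cites, but as written it has a genuine gap at the step you flag as the main obstacle, the inverse. First, your formula has no seed. With nothing to start from there is no ``earlier option'', so the set is empty, its $\mex$ is $0$, and your check that $0$ is an option of $\alpha\otimes\alpha^{-1}$ fails. The correct definition is as follows. Let $S$ be the smallest set containing $0$ and closed under the maps $f_\delta(\beta)=\bigl(1\oplus((\delta\oplus\alpha)\otimes\beta)\bigr)\otimes\delta^{-1}$ for $0<\delta<\alpha$ (the $\delta^{-1}$ exist by induction on $\alpha$), and put $\rho:=\mex S$. This is a set-sized closure of $\{0\}$, so nothing runs over the value being defined.

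Second, you do not say why $1$ is not an option of $\alpha\otimes\rho$, and that is the whole content of the step. Suppose $(\alpha'\otimes\rho)\oplus(\alpha\otimes\rho')\oplus(\alpha'\otimes\rho')=1$ with $\alpha'<\alpha$ and $\rho'<\rho$; then $\rho'\in S$.
If $\alpha'\neq0$, distributivity gives $\alpha'\otimes\rho=1\oplus((\alpha\oplus\alpha')\otimes\rho')$, so $\rho=f_{\alpha'}(\rho')\in S$, contradicting $\rho=\mex S$.
If $\alpha'=0$, the equation says $\alpha\otimes\rho'=1$. This is impossible because no element of $S$ is an inverse of $\alpha$. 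That holds for $0$, and it propagates along the generation of $S$ via the identity $(\alpha\otimes f_\delta(\beta))\oplus1=(\alpha\oplus\delta)\otimes\bigl(1\oplus(\alpha\otimes\beta)\bigr)\otimes\delta^{-1}$ and the absence of zero divisors.
Finally, $0\in S$ gives $\rho>0$, so the option $0$ is present (take $\alpha'=\rho'=0$), hence $\alpha\otimes\rho=1$. With the seed and this two-case argument your sketch is self-contained; without them the inverse step is, like the paper's proof, only a citation of Conway.

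Your fallback would not close the gap. An injective self-map of an infinite set need not be surjective, and closure under $\oplus,\otimes$ does not rescue this. By Proposition~\ref{urm-corp-1}, $\tau^\omega$ is a set-sized initial segment closed under $\oplus$ and $\otimes$ with no zero divisors, so $x\mapsto\tau\otimes x$ is injective on it; yet $\tau$ has no inverse there. Moreover, $2^{2^\gamma}$ need not be closed for infinite $\gamma$: for example $2^{2^{\omega+1}}=\omega^2$, while $\omega\otimes\omega=\omega^2$. Obtaining closed segments from Theorem~\ref{set} or Propositions~\ref{urm-corp-1}, \ref{urm-corp-2} would also be circular, since those results are formulated inside the Field $\ordinali$.

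A smaller point affects all your option-matching steps: the option sets of the two sides are generally not equal. For instance, $\alpha\oplus(\beta'\oplus\gamma)$ need not be an option of $\alpha\oplus(\beta\oplus\gamma)$, since $\beta'\oplus\gamma$ may exceed $\beta\oplus\gamma$. What works is the criterion that two $\mex$-values coincide as soon as neither is an option of the other. This must be combined with cancellation against every $\epsilon\neq\beta\oplus\gamma$, not only the smaller ones.
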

	
	\begin{proof}
		See \cite[Chapter 6]{onag}.
	\end{proof}
	
	The following theorem describes ordinals that are not groups/rings/fields/algebraically closed fields as the ``simplest" extension over themselves and also states that in certain conditions, Nim arithmetic is the same as standard ordinal arithmetic. 
	
	\begin{theorem} \label{set}(The simplest extension theorem)
		Let $\gamma$ be a nonzero ordinal. 
		\begin{enumerate}[a)]
			\item If $(\gamma, \oplus)$ is not a group, then $\gamma = \alpha \oplus \beta$, where $(\alpha, \beta)$ is the
			lexicographically least pair of ordinals with $\alpha \oplus \beta \notin \gamma$.
			\item If $(\gamma, \oplus)$ is a group, then $(\gamma \times \beta) \oplus \alpha = (\gamma \times \beta)+\alpha$, for all ordinals $\beta$ and all $\alpha < \gamma$.
			\item If $(\gamma, \oplus)$ is a group, but $(\gamma,\oplus,\otimes)$ is not a ring, then $\gamma = \alpha \otimes \beta$, where $(\alpha, \beta)$ is the
			lexicographically least pair of ordinals with $\alpha \otimes \beta \notin \gamma$.
			\item If $(\gamma,\oplus,\otimes)$ is a ring, and let $\delta \leq \gamma$ be an ordinal such that $(\delta,\oplus)$ is a group and every nonzero $\alpha \in \delta$ has a Nim inverse in $\gamma$, then $\gamma \otimes \alpha = \gamma \times \alpha$ for all $\alpha \in \delta$.
			\item If $(\gamma,\oplus,\otimes)$ is a ring, but not a field, then $\gamma \otimes \alpha = 1 $, where $\alpha$ is the
			least nonzero ordinal with no Nim inverse in $\gamma$.
			\item If $(\gamma,\oplus,\otimes)$ is a field and $n \in \mathbb{N}$ such that every polynomial of degree at most $n$ has a Nim root in $\gamma$, then for all $ \alpha_0, \dots, \alpha_n < \gamma$, we have that
			$\bigoplus_{i=0}^n (\gamma^{\boxedexp{i}} \otimes \alpha_i) = \sum_{i=n}^0 (\gamma^i \times \alpha_i)$. 
			\item If $(\gamma,\oplus,\otimes)$ is a field but not algebraically closed, then $\gamma$ is a Nim root of the lexicographically least polynomial over $\gamma$ with no Nim root in $\gamma$. This polynomial is also the lexicographically least irreducible polynomial over $\gamma$.
			\item If $(\gamma,\oplus,\otimes)$ is an algebraically closed field and $n \in \mathbb{N}$, then for all $ \alpha_0, \dots, \alpha_n < \gamma$, we have that
			$\bigoplus_{i=0}^n (\gamma^{\boxedexp{i}} \otimes \alpha_i) = \sum_{i=n}^0 (\gamma^i \times \alpha_i)$. 
			\item If $(\gamma,\oplus,\otimes)$ is an algebraically closed field, then $\gamma$ is transcendental over $\gamma$. Ordinals that are algebraically closed field are called transcendental.
			
		\end{enumerate}
	\end{theorem}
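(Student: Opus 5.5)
The plan is to prove all nine parts together by transfinite induction on $\gamma$, following Conway's argument in \cite[Chapter 6]{onag} as presented in \cite{siegel}. The basic tool is a mex characterisation. For any ordinal $\gamma$, the set of values $\alpha'\oplus\beta'$ (respectively of the Nim-product expressions) that the recursive definition excludes when computing at a pair above $\gamma$ is exactly controlled by what happens below $\gamma$. Parts a), c), e), g) say that $\gamma$ equals the first element that is \emph{forced out} of the corresponding substructure. Parts b), d), f), h) say that once $\gamma$ is closed under the relevant operation, Nim arithmetic with $\gamma$ agrees with ordinal arithmetic.

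For a), let $(\alpha,\beta)$ be the lexicographically least pair with $\alpha\oplus\beta\notin\gamma$. I will show that every $\delta<\gamma$ occurs as an option of $\alpha\oplus\beta$, so the mex is at least $\gamma$. I will also show that no option equals $\gamma$: the options are $\alpha'\oplus\beta$ and $\alpha\oplus\beta'$, and these lie in $\gamma$ by minimality of the pair. Hence $\alpha\oplus\beta=\gamma$. The covering of each $\delta<\gamma$ uses cancellation, i.e.\ $\delta\oplus\alpha$ is the unique solution of $\alpha\oplus x=\delta$, which in turn needs the inductive hypothesis that $\oplus$ is a group operation on smaller ordinals.

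For b), I will induct on $\beta$ and then on $\alpha$. The options of $(\gamma\times\beta)\oplus\alpha$ are exactly the ordinals below $\gamma\times\beta+\alpha$, and none equals it, so the mex is $\gamma\times\beta+\alpha$. Part c) is the multiplicative analogue of a). The key identity is that $(\alpha-\alpha')(\beta-\beta')$ expands into the option expression, and bilinearity below $\gamma$ makes the option map injective in the appropriate sense.

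Part d) proceeds the same way, using inverses in $\gamma$ to hit every ordinal below $\gamma\times\alpha$. Part e) follows from d), since the options of $\gamma\otimes\alpha$ cover everything below $1$'s successor pattern and exclude $1$. The real content is in f), g), h): every ordinal below $\gamma^{n+1}$ is represented as a polynomial in $\gamma$ of degree at most $n$ with coefficients below $\gamma$. The condition ``every polynomial of degree at most $n$ has a root in $\gamma$'' guarantees that $\gamma$ satisfies no relation of degree at most $n$ over $\gamma$, so the map from polynomials to ordinals is injective. Ordering the polynomials lexicographically by leading coefficient, the mex recursion then places them exactly at the Cantor normal form positions $\sum\gamma^i\times\alpha_i$.

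For g), let $p$ be the least polynomial with no root in $\gamma$. By f) applied with $n=\deg p-1$, all polynomial expressions in $\gamma$ of lower degree are distinct, and $p(\gamma)$ is computed as the mex of values that avoid $0$; hence $p(\gamma)=0$. Irreducibility and lexicographic minimality follow because any lexicographically smaller irreducible polynomial would have a root in $\gamma$. Parts h) and i) follow from f) for all $n$.

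The main obstacle I expect is f)/g): one has to carry out the Nim-product mex computation for $\gamma^{\boxedexp{n}}$ times coefficients, which requires careful bookkeeping of which excluded values arise. I would first establish the $n=1$ case as a template and then induct on degree.
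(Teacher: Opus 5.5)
The paper does not prove this theorem itself; it cites Siegel (Theorem 4.3 and Lemma 4.4). Your outline follows that Conway--Siegel route, but it has a genuine gap exactly at f)/g). In g), the step ``$p(\gamma)$ is computed as the mex of values that avoid $0$'' is not valid. $p(\gamma)$ is a Nim-sum of Nim-products, not a mex, and Nim-adding a constant does not commute with mex. What must be computed is $\gamma\boxedexp{N}=\gamma\otimes\gamma\boxedexp{N-1}$ with $N=\deg p$. The missing idea is to rewrite each option using $(\gamma'\otimes\beta)\oplus(\gamma\otimes\beta')\oplus(\gamma'\otimes\beta')=(\gamma\otimes\beta)\oplus((\gamma\oplus\gamma')\otimes(\beta\oplus\beta'))$ with $\beta=\gamma\boxedexp{N-1}$. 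By f) for $n=N-1$, as $\beta'$ runs below $\beta$, $\beta\oplus\beta'$ runs over $m(\gamma)$ for all monic $m$ of degree $N-1$. Hence the options of $\gamma\boxedexp{N}$ are exactly the $q(\gamma)$ with $\deg q<N$ such that $X\boxedexp{N}\oplus q$ has a root $\gamma'<\gamma$. Since f) also makes $q\mapsto q(\gamma)$ an order isomorphism from coefficient tuples (compared from the top coefficient down) onto the ordinal $\gamma^N$, the mex is $q_0(\gamma)$, where $X\boxedexp{N}\oplus q_0$ is the lexicographically least root-free monic polynomial of degree $N$. That is, $p(\gamma)=0$.

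The same computation is what proves f) itself. For $k\le n$ every monic polynomial of degree $k$ has a root in $\gamma$, so the options of $\gamma\boxedexp{k}$ exhaust $\gamma^k$, giving $\gamma\boxedexp{k}=\gamma^k$. Injectivity of $q\mapsto q(\gamma)$, which is all your sketch provides, cannot tell you which ordinal $q(\gamma)$ is. You also need $\gamma^k\otimes a=\gamma^k\times a$ for $a<\gamma$, by the argument of d). The options are $(\gamma^k\times a')\oplus(\delta\otimes(a\oplus a'))$, and multiplication by $a\oplus a'\neq 0$ permutes $\gamma^k$ because, inductively, $\gamma^k$ is the $\gamma$-span of $1,\gamma,\dots,\gamma\boxedexp{k-1}$. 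Part d) itself does not apply, since $\gamma^k$ is not a ring for $2\le k\le n$. Then b) turns the Nim-sum into the ordinal sum.

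Part e) does not follow from d). The least non-invertible $\alpha$ need not be a group: for $t$ transcendental, the ring $\gamma=t^{\omega+\omega}=1\oslash(t\oplus1)$ of Proposition~\ref{urm-corp-1} has $\alpha=t+1$. Then $\gamma\otimes\alpha'$ need not equal $\gamma\times\alpha'$; here $\gamma\otimes t=\gamma\oplus1=\gamma+1$. In fact no values of $\gamma\otimes\alpha'$ are needed. $0$ is an option, and suppose $(\gamma'\otimes\alpha)\oplus(\gamma\otimes\alpha')\oplus(\gamma'\otimes\alpha')=1$. Either $\alpha'=0$, and then $\gamma'<\gamma$ inverts $\alpha$; or $\alpha'\neq0$ is invertible in the ring $\gamma$, and then $\gamma=(1\oplus\gamma'\otimes(\alpha\oplus\alpha'))\oslash\alpha'\in\gamma$. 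Both are contradictions.

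Finally, in a) and c) you do not need every $\delta<\gamma$ to be an option. The hypothesis $\alpha\oplus\beta\notin\gamma$ (resp.\ $\alpha\otimes\beta\notin\gamma$) already gives $\geq\gamma$. Your cancellation remark would not prove it anyway, since $\delta\oplus\alpha$ need not lie below $\beta$. All that is needed is that every option lies in the group $\gamma$, which follows from minimality of the pair.
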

	
	\begin{proof}
		See \cite[Theorem 4.3 and Lemma 4.4]{siegel}.
	\end{proof}
	
	Due to the next theorem, we can always say when an ordinal is a group.
	
	\begin{theorem} \label{thm-grup}
		\begin{enumerate}[a)]
			\item Ordinals that are groups are precisely the powers of 2.
			
			\item Each ordinal can be written uniquely as a finite sum of descending powers of 2,
			and it is the same sum in both senses ($\oplus$ and $+$).
		\end{enumerate}
	\end{theorem}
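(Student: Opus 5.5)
We prove the two parts together by transfinite induction on the ordinal, using the simplest extension theorem (Theorem~\ref{set}) as the main tool. The induction hypothesis for part~b) at stage $\gamma$ is that every ordinal $\beta < \gamma$ has a unique representation $\beta = 2^{\alpha_1} + \dots + 2^{\alpha_k}$ with $\alpha_1 > \dots > \alpha_k$, and that this equals $2^{\alpha_1} \oplus \dots \oplus 2^{\alpha_k}$.

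\textbf{Existence and uniqueness of the ordinary sum.} This is standard ordinal arithmetic, namely Cantor normal form in base $2$.
\begin{itemize}
\item Let $\beta>0$ and let $\alpha_1$ be maximal with $2^{\alpha_1}\le\beta$. Write $\beta = 2^{\alpha_1}+\beta'$.
\item Then $\beta' < 2^{\alpha_1}$. Otherwise $\beta \geq 2^{\alpha_1}\times 2 = 2^{\alpha_1+1}$, contradicting the maximality of $\alpha_1$.
\item Recursing on $\beta'$ gives strictly decreasing exponents, so the process terminates.
\item Uniqueness follows by comparing the leading exponents and cancelling on the left.
\end{itemize}

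\textbf{Part a), first direction: every $2^\alpha$ is a group.} We argue by induction on $\alpha$.
\begin{itemize}
\item For limit $\alpha$, $2^\alpha$ is an increasing union of the groups $2^{\delta}$ with $\delta<\alpha$, so it is closed under $\oplus$.
\item For $\alpha=\delta+1$, put $\gamma=2^\delta$, which is a group by hypothesis. By Theorem~\ref{set}b), for $x,y<\gamma$ we have $\gamma\oplus x=\gamma+x$.
\item Every element of $2^{\delta+1}=\gamma\times 2$ therefore has the form $x$ or $\gamma\oplus x$ with $x<\gamma$.
\item Nim addition is associative and commutative with $\gamma\oplus\gamma=0$ (characteristic $2$). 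Hence sums of such elements are again $x'$ or $\gamma\oplus x'$ with $x'<\gamma$.
\item So $2^{\delta+1}$ is closed under $\oplus$. It is a group because every element is its own inverse.
\end{itemize}

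\textbf{Part a), second direction: a group must be a power of $2$.} Suppose $\gamma$ is a group and write $\gamma = 2^{\alpha_1}+\beta'$ with $0<\beta'<2^{\alpha_1}$.
\begin{itemize}
\item Since $2^{\alpha_1}$ is a group, Theorem~\ref{set}b) gives $2^{\alpha_1}\oplus\beta'=2^{\alpha_1}+\beta'=\gamma$.
\item Both $2^{\alpha_1}$ and $\beta'$ lie below $\gamma$, since $\beta'>0$.
\item So $\gamma$ is not closed under $\oplus$, a contradiction.
\end{itemize}

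\textbf{Part b).} Apply Theorem~\ref{set}b) with $\gamma=2^{\alpha_1}$ and $\beta=1$.
\begin{itemize}
\item This gives $2^{\alpha_1}+\beta'=2^{\alpha_1}\oplus\beta'$ for $\beta'<2^{\alpha_1}$.
\item By induction, $\beta'$ equals its ordinary sum of powers of $2$ and also the corresponding Nim sum.
\item Substituting, $\beta$ equals both $2^{\alpha_1}+\dots+2^{\alpha_k}$ and $2^{\alpha_1}\oplus\dots\oplus 2^{\alpha_k}$.
\item Uniqueness in the Nim sense follows from uniqueness in the ordinary sense, because the two sums coincide term by term.
\end{itemize}

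The main obstacle is the successor step of the first direction of part~a). There we must make sure that Theorem~\ref{set}b) applies only to $\alpha<\gamma$, and that associativity and commutativity of $\oplus$ are available. Both are supplied by the Field structure of $\ordinali$.
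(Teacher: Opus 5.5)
Your proof is correct: the paper gives no argument of its own here, only citing Conway (ONAG, Theorem~48), and your route is the standard one behind that citation. You use Theorem~\ref{set} b), i.e.\ $\gamma\oplus x=\gamma+x$ for a group $\gamma$ and $x<\gamma$, to see that $\gamma\times 2$ is again a group, that unions of chains of groups are groups, and that $2^{\alpha}+\beta'$ with $0<\beta'<2^{\alpha}$ is the Nim sum of two smaller ordinals, after which addition of distinct powers of $2$ is carry-free. Relying on Theorem~\ref{set} b) is not circular, since that part is proved by a direct $\mex$ induction using only closure of $\gamma$ under $\oplus$. What you leave implicit is routine: the base case $2^0=1$, and reading the Nim-uniqueness step as applying your identity to the ordinary sum of an arbitrary descending family of powers of $2$.
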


	\begin{proof}
		See \cite[Theorem 48]{onag}.
	\end{proof}

	\begin{lemma} \label{lema-produs-grupuri}
		Let $\alpha$ be a group ordinal. Let $\beta $ be a nonzero ordinal. Then $\beta $ is a group if and only if $ \alpha \times \beta$ is a group. 
	\end{lemma}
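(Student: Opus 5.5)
By Theorem~\ref{thm-grup}~a), the group ordinals are exactly the powers of $2$. Since $\alpha$ is a group we may write $\alpha = 2^{a}$ for some ordinal $a$. The statement then becomes: $\beta$ is a power of $2$ if and only if $2^a \times \beta$ is a power of $2$. Both directions should follow from the uniqueness of the base-$2$ expansion in Theorem~\ref{thm-grup}~b), together with standard exponent laws of ordinal arithmetic.

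For the forward direction, suppose $\beta = 2^b$. Then $2^a \times 2^b = 2^{a+b}$ by the usual law $\gamma^{x}\times\gamma^{y} = \gamma^{x+y}$, which the paper already uses freely, for instance in the proof of Lemma~\ref{lema-epsilon}. So the product is a power of $2$ and hence a group.

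For the converse, write $\beta$ by Theorem~\ref{thm-grup}~b) as a descending sum
\[
\beta = 2^{b_1} + 2^{b_2} + \dots + 2^{b_k}, \qquad b_1 > b_2 > \dots > b_k,\ k \geq 1,
\]
where $k \geq 1$ because $\beta \neq 0$. Left distributivity of ordinal multiplication over addition gives
\[
2^a \times \beta = 2^{a+b_1} + \dots + 2^{a+b_k}.
\]
Ordinal addition is strictly increasing in the right argument, so $a+b_1 > a+b_2 > \dots > a+b_k$. Hence this is again a descending sum of powers of $2$, with $k$ terms.

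If $2^a\times\beta$ is a group, it equals some $2^c$, which is a one-term expansion. Uniqueness in Theorem~\ref{thm-grup}~b) then forces $k=1$, so $\beta = 2^{b_1}$ is a power of $2$ and therefore a group.

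The only point needing care is the justification of the algebraic facts used: left distributivity $\gamma\times(x+y)=\gamma\times x+\gamma\times y$, strict monotonicity of $x\mapsto a+x$, and $2^a\times 2^b = 2^{a+b}$. All of these are standard properties of ordinal arithmetic assumed at the start of Section~\ref{sec: prel}, so I do not expect a genuine obstacle. The step most worth stating carefully is that the expansion obtained after multiplying is still strictly descending, since that is exactly what lets the uniqueness of Theorem~\ref{thm-grup}~b) apply.
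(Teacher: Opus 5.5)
Your proof is correct. The forward direction is the same as the paper's, but the converse takes a different route.

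The paper argues directly with exponents. From $2^c = 2^a\times\beta \geq 2^a$ it gets $c\geq a$, writes $c = a+b$, and concludes from $2^a\times\beta = 2^a\times 2^b$ that $\beta = 2^b$. That last step silently uses left cancellation, i.e.\ that $x\mapsto 2^a\times x$ is strictly increasing. You instead expand $\beta$ in base $2$, distribute $2^a$ across the terms, and use the uniqueness part of Theorem~\ref{thm-grup}~b) to force a single term.

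The paper's version is shorter and never touches the binary normal form. It needs only monotonicity of $2^x$, ordinal subtraction, and cancellation. Your version avoids subtraction and cancellation, reuses a theorem the paper already cites, and makes every arithmetic step explicit. It also yields slightly more: left multiplication by $2^a$ preserves the number of terms in the binary expansion, and the lemma is the one-term case.
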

	
	\begin{proof}
		From Theorem~\ref{thm-grup} a) it follows that there is an ordinal $a$ such that $\alpha = 2^a$.
		
		If $\beta$ is a group, then from Theorem~\ref{thm-grup} a) it follows that there is an ordinal $b$ such that $\beta = 2^b$, so $ \alpha \times \beta = 2^{a+b}$ is a group.
		
		If $\alpha \times \beta$ is a group, then from Theorem~\ref{thm-grup} a) it follows that there is an ordinal $c$ such that $\alpha \times \beta = 2^c$. Then $c \geq a$, so there is an ordinal $b$ such that $c = a+b$, so $\beta = 2^b$ is a group.
	\end{proof}

	\begin{theorem} \label{primul-transcendent}
		The first transcendental ordinal is $\omega^{\omega^\omega}$. 
	\end{theorem}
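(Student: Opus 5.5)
The plan is to follow Conway's original route and use the simplest extension theorem (Theorem~\ref{set}) repeatedly to locate the field ordinals below $\omega^{\omega^\omega}$ and the degrees they realize. First I establish the tower of quadratic extensions. By Theorem~\ref{thm-grup}, $2^{2^n}$ is a group for each $n$. I then show by induction on $n$ that $2^{2^n}$ is a field and that $2^{2^{n+1}}$ is a quadratic extension of it, with $2^{2^n}$ a root of $X^2+X+\alpha_n$, where $\alpha_n=\prod_{k<n}2^{2^k}$. This is the classical finite Nim-field computation ($\mathbb F_{2^{2^n}}$), carried out via parts c)--g) of Theorem~\ref{set}. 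Taking the union, $\omega=\sup 2^{2^n}$ is a field, namely the union $\bigcup_n \mathbb F_{2^{2^n}}$, which contains exactly the finite extensions of $\mathbb F_2$ of $2$-power degree.

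Next, for each odd prime $p$ (the $k$-th odd prime $p_k$), I would identify the field ordinal $\kappa_k=\omega^{\omega^{k-1}}$ and show the following. The field $\kappa_k$ is closed under all extensions of degree $q^j$ for primes $q<p_k$. Its lexicographically least polynomial without a root is $X^{p_k}-\alpha_{p_k}$ for a suitable explicit $\alpha_{p_k}<\kappa_k$ (for $p=3$, $\alpha=2$, giving $\omega^{\boxedexp{3}}=2$). By Theorem~\ref{set} g), $\kappa_k$ is then a root of that polynomial. Part f) then gives $\bigoplus \kappa_k^{\boxedexp i}\otimes a_i=\sum \kappa_k^i a_i$ for $i<p_k$, so $\kappa_k^{p_k}$ is the next field-like stage. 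Iterating, $\kappa_k^{\omega}=\omega^{\omega^{k}}=\kappa_{k+1}$ is a field closed under all degrees $p_k^j$. This uses the fact that over finite fields the extension of degree $p^j$ is generated by adjoining successive $p$-th roots of a suitable element, which is cyclic.

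Finally, $\omega^{\omega^\omega}=\sup_k\kappa_k$ is a union of fields. Since over $\mathbb F_2$ every finite extension is cyclic of degree $\prod q^{j}$, the union contains $\mathbb F_{2^m}$ for every $m$, hence it is $\overline{\mathbb F_2}$ and algebraically closed. It is the \emph{first} transcendental ordinal because every smaller ordinal lies below some $\kappa_k^{j}$, and that field misses the degree-$p_k^{j+1}$ extension (or the $p_{k+1}$ extension). Hence no smaller ordinal is an algebraically closed field.

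The main obstacle is the explicit determination of the excess $\alpha_p$: one must show that the lexicographically least rootless polynomial over $\kappa_k$ really is $X^p-\alpha_p$, which amounts to checking that $\alpha_p$ is not a $p$-th power in $\kappa_k$ while all smaller candidates are, and that no lexicographically smaller polynomial of degree $<p$ lacks a root. Since the paper explicitly defers to Conway for this, I would cite \cite[Chapter 6]{onag} and \cite{siegel} for the excess computation and the degree bookkeeping rather than reproduce it.
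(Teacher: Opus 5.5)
Your outline is correct. It is Conway's argument, and the paper's own proof consists only of a citation of Siegel's Theorem 4.6, which rests on that same argument, so you are following the paper's route. One small point: the explicit value of the excess $\alpha_p$ is not really the obstacle you make it out to be, because by Proposition~\ref{urm-corp-2} the next field ordinal after a non-transcendental $\alpha$ is $\alpha^n$ with $n$ the degree of the least rootless polynomial, and that degree (the least prime $q$ for which the current subfield of $\overline{\mathbb{F}_2}$ still admits a degree-$q$ extension) follows from the Galois theory of finite fields alone.
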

	
	\begin{proof}
		See \cite[Theorem 4.6]{siegel}.
	\end{proof}
	
	\begin{notation}
		Let $\tau := \omega^{\omega^\omega}$.
	\end{notation}
	
	As opposed to group ordinals, field ordinals do not have a nice description and the next field ordinal after a given field ordinal depends on the nature of the field ordinal. 
	
	\begin{proposition} \label{urm-corp-1}
		If $\alpha$ is a transcendental ordinal, then the next field ordinal is $\alpha^{\omega + \omega \times \alpha}$. Moreover, $ \alpha^{\omega + \omega \times \alpha} = \alpha(\alpha) $. We have that, for all $\alpha'< \alpha$, 
		
		\begin{itemize}
			\item[\small$\bullet$] $\alpha^{\omega + \omega \times \alpha'} = \{ P(\alpha) \oslash (\bigotimes_{i=1}^m (\alpha \oplus \beta_i)^{\boxedexp{m_i}}) \mid m \geq 0,\ P \in \alpha[x],\ \beta_i < \alpha',\ m_i \in \mathbb{N}^*, \text{ for every } i \in \{1,\dots, m\} \}$ is a ring in which the smallest element without an inverse is $\alpha + \alpha'$;
			
			\item[\small$\bullet$] $\alpha^{\omega + \omega \times \alpha'} = 1 \oslash (\alpha \oplus \alpha') $;
			
			\item[\small$\bullet$] For all $n \in \omega $:
			
			\begin{itemize}
				\item[\small$\bullet$] $\alpha^{\omega + \omega \times \alpha' + n} = \{ P(1 \oslash (\alpha + \alpha')) \oplus \bigoplus_{i=1}^m P_i(1 \oslash (\alpha + \beta_i)) \oplus Q(\alpha) \mid Q \in \alpha[x],\ P \in \alpha[x] \text{ of degree at most } n,\  m \geq 0,\ P_i \in \alpha[x],\text{ for every } i \in \{1,\dots, m\},\ \beta_i < \alpha', \text{ for every } i \in \{1,\dots, m\} \}$;
				\item[\small$\bullet$] $\alpha^{\omega + \omega \times \alpha' + n} \otimes a = \alpha^{\omega + \omega \times \alpha' + n} \times a$, for all $ a < \alpha$; 
				\item[\small$\bullet$] $\alpha^{\omega + \omega \times \alpha' + n} = (\alpha^{\omega + \omega \times \alpha'})^{\boxedexp{n+1}}$.
			\end{itemize}

		\end{itemize}

	\end{proposition}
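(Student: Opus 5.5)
We argue by transfinite induction on $\alpha' \le \alpha$, proving all bullet points simultaneously for each $\alpha'$. At $\alpha' = \alpha$ this gives that $\alpha^{\omega+\omega\times\alpha}$ is the ring of all fractions $P(\alpha)\oslash Q(\alpha)$ with every denominator invertible, i.e. the field $\alpha(\alpha)$. The case $\alpha'=0$ of the first bullet reads $\alpha^{\omega}=\alpha[\alpha]$. This follows from Theorem~\ref{set} h): since $\alpha$ is algebraically closed, $\bigoplus \alpha^{\boxedexp{i}}\otimes a_i=\sum_{i=n}^0\alpha^i\times a_i$, and these sums exhaust $\alpha^\omega$. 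By Theorem~\ref{set} i), $\alpha$ is transcendental over $\alpha$, so $\alpha^\omega$ is a ring, not a field.

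\emph{Inverse step.} Assume $R_{\alpha'}:=\alpha^{\omega+\omega\times\alpha'}$ equals the displayed set of fractions, whose denominators are products of powers of $\alpha\oplus\beta$ with $\beta<\alpha'$. We must show the least nonzero non-invertible element is $\alpha\oplus\alpha'=\alpha+\alpha'$ (Theorem~\ref{set} b)). Every element of $R_{\alpha'}$ below $\alpha+\alpha'$ is a constant $c<\alpha$, which is invertible, or $\alpha\oplus\beta$ with $\beta<\alpha'$, which is invertible by hypothesis. On the other hand, $\alpha\oplus\alpha'$ has no inverse in the set. Indeed, $\alpha(\alpha)$ is a rational function field, so $1/(\alpha+\alpha')$ has denominator $x+\alpha'$, which is not a product of the allowed factors because $\mathbb{F}$-linear factors are prime and distinct. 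Theorem~\ref{set} e) then gives $R_{\alpha'}\otimes(\alpha\oplus\alpha')=1$, i.e. the second bullet.

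\emph{Filling in between.} Put $t=1\oslash(\alpha+\alpha')$. We show inductively on $n$ that $\alpha^{\omega+\omega\times\alpha'+n}$ is the set $S_n$ of elements $P(t)\oplus(\text{element of } R_{\alpha'})$ with $\deg P\le n$, and that it equals $t^{\boxedexp{n+1}}$. By partial fractions, every element of the localization $R_{\alpha'}[t]$ decomposes uniquely as $Q(\alpha)\oplus\sum P_i(1\oslash(\alpha+\beta_i))\oplus P(t)$ with $P(0)=0$, which matches the displayed description. Each $S_n$ is a group, since it is closed under $\oplus$. By Theorem~\ref{set} d), applied with $\gamma=\alpha^{\omega+\omega\times\alpha'+n}$ and $\delta=\alpha$ (elements below $\alpha$ are invertible in $\alpha$), we get $\gamma\otimes a=\gamma\times a$. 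The key computation is the claim $\alpha^{\omega+\omega\times\alpha'+n}=t^{\boxedexp{n+1}}$. Assume the group $\gamma_n=\alpha^{\omega+\omega\times\alpha'+n}$ coincides with $S_{n-1}\oplus\{\text{multiples of } t^{\boxedexp{n}}\}$. Then the next group ordinal $\gamma_n\times\alpha$ consists of the elements $\gamma_n\otimes a\oplus s$. We need to show that $\gamma_n$ itself equals $t^{\boxedexp{n+1}}$, i.e. that $t^{\boxedexp{n+1}}$ is the mex of what is generated so far.

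We do this with the ring version of the simplest extension theorem. The ordinal $\gamma_n$ is a group but not closed under $\otimes$, because $t\otimes t^{\boxedexp{n}}\notin S_{n-1}$. By Theorem~\ref{set} c), $\gamma_n=a\otimes b$ for the lexicographically least pair with product outside. We must check that this least pair is $(t,\,t^{\boxedexp{n}})$, or some pair giving the same product modulo elements already present. Handling the sums of partial fractions is where care is needed. For $n$ to pass to limits, note that the union over $n$ is the full ring $R_{\alpha'+1}$ at $\omega+\omega\times(\alpha'+1)$. At limit $\alpha'$ we take unions, which preserves the first bullet.

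The main obstacle is this identification of the lexicographically least non-closed product and the verification that $S_n$ is exactly an initial segment. First I would order elements by their leading pole term at $t$ and apply Theorem~\ref{set} c) directly.
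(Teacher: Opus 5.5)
Your overall architecture matches the paper's: an outer induction on $\alpha'$, the inverse of $\alpha\oplus\alpha'$ via Theorem~\ref{set}~e), and an inner induction on $n$ via Theorems~\ref{set}~b) and~c). The inner induction, however, has two genuine holes.

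First, you get $\gamma_n\otimes a=\gamma_n\times a$ for $\gamma_n=\alpha^{\omega+\omega\times\alpha'+n}$ from Theorem~\ref{set}~d). That theorem requires $\gamma_n$ to be a ring, and for $n\geq 1$ it is not: $t$ and $t\boxedexp{n}$ lie in $S_n=\gamma_n$, but their product does not. You say yourself a few lines later that $\gamma_n$ is not closed under $\otimes$. The identity is still true, and you need it. Together with $\gamma_m=t\boxedexp{m+1}$ for $m\le n$, the base-$\alpha$ expansion of ordinals below $\gamma_{n+1}$ and Theorem~\ref{set}~b), it is exactly what shows that the ordinal $\gamma_{n+1}$ is the set $S_{n+1}$ (the initial-segment issue you flag). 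It has to be proved directly, as the paper does. By induction on $a<\alpha$, $\gamma_n\otimes a=\mex\{(\gamma_n\otimes a')\oplus(\beta\otimes(a\oplus a'))\mid \beta<\gamma_n,\ a'<a\}$. Since $a\oplus a'$ is a nonzero element of the field $\alpha$, the map $\beta\mapsto\beta\otimes(a\oplus a')$ permutes $S_n$, because it rescales coefficients and preserves degrees. Theorem~\ref{set}~b) then turns the mex into $\gamma_n\times a$.

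Second, and more seriously, the heart of the proposition, $\gamma_n=t\boxedexp{n+1}$, is left as ``the main obstacle''. You never identify the lexicographically least pair, and the hedge ``or some pair giving the same product modulo elements already present'' is not something Theorem~\ref{set}~c) permits. The missing observation is short:
\begin{enumerate}[(1)]
\item The ordinals below $t$ are exactly the ring $R_{\alpha'}$, and $R_{\alpha'}\otimes S_n\subseteq S_n$. By partial fractions, multiplying by a fraction with no pole at $\alpha'$ cannot raise the order of the pole at $\alpha'$. So no pair with first coordinate $<t$ has product outside $\gamma_n$.
\item The ordinals below $t\boxedexp{n}=\gamma_{n-1}$ form $S_{n-1}$, and $t\otimes S_{n-1}\subseteq S_n$.
\item $t\boxedexp{n+1}\notin S_n$ by uniqueness of the partial-fraction decomposition, i.e.\ by transcendence of $\alpha$. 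Note that it is $S_n$, not $S_{n-1}$ as you wrote, that must be avoided.
\end{enumerate}
Hence the least pair is $(t,t\boxedexp{n})$, and Theorem~\ref{set}~c) gives $\gamma_n=t\otimes t\boxedexp{n}$.

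At the end you should also state two things. Algebraic closedness of $\alpha$ makes every nonzero denominator a product of factors $x\oplus\beta$ with $\beta<\alpha$, so the union at $\alpha'=\alpha$ is all of $\alpha(\alpha)$. And any field ordinal $\xi>\alpha$ contains $\alpha(\alpha)$, which gives the minimality claim.
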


	\begin{proof}
		
		We prove by induction on $\alpha' < \alpha$ that:
		
		\begin{itemize}
			\item[\small$\bullet$] $\alpha^{\omega + \omega \times \alpha'}$ is a ring in which the smallest element without an inverse is $\alpha + \alpha'$;
			
			\item[\small$\bullet$] $\alpha^{\omega + \omega \times \alpha' + n} = \{ P(1 \oslash (\alpha + \alpha')) \oplus \bigoplus_{i=1}^m P_i(1 \oslash (\alpha + \beta_i)) \oplus Q(\alpha) \mid Q \in \alpha[x],\ P \in \alpha[x] \text{ of degree at most } n,\  m \geq 0,\ P_i \in \alpha[x],\text{ for every } i \in \{1,\dots, m\},\ \beta_i < \alpha', \text{ for every } i \in \{1,\dots, m\} \}, \text{ for every } n \in \mathbb{N}$.
		\end{itemize}

		For $\alpha' = 0$:
		
		From Theorem~\ref{set} h), we know that
		$a_0 \oplus (\alpha \otimes a_1) \oplus (\alpha^{\boxedexp{2}} \otimes a_2) \oplus \dots \oplus (\alpha^{\boxedexp{n}} \otimes a_n) = (\alpha^n \times a_n)+  \dots + (\alpha \times a_1) + a_0 $, for every $ n \geq 0,\  a_0, \dots, a_n < \alpha$.

		It follows that $\alpha^\omega$ is a ring, because:
		\begin{align*}
			\alpha^\omega &= \{ (\alpha^n \times a_n)+  \dots + (\alpha \times a_1) + a_0 \mid n \geq 0,\ a_0, \dots, a_n < \alpha \} \\ &
			= \{ a_0 \oplus (\alpha \otimes a_1) \oplus \dots \oplus (\alpha^{\boxedexp{n}} \otimes a_n) \mid n \geq 0,\ a_0, \dots, a_n < \alpha \} \\ &
			= \{ P(\alpha) \mid P \in \alpha[x]  \}.
		\end{align*}

		Since $\alpha$ is a field, any $ \gamma < \alpha$ has an inverse in $\alpha$, and thus also in $\alpha^\omega$. Assume towards a contradiction that there exists $ \beta \in \alpha^\omega$ such that $\beta \otimes \alpha = 1$. Let $\beta = a_0 \oplus (a_1 \otimes \alpha) \oplus \dots \oplus (a_n \otimes \alpha^{\boxedexp{n}})$, $n \geq 0,\ a_0, \dots, a_n < \alpha$.
		
		Then
		$1 \oplus (a_0 \otimes \alpha) \oplus (a_1 \otimes \alpha^{\boxedexp{2}}) \oplus \dots \oplus (a_n \otimes \alpha^{\boxedexp{n+1}}) = 0$, hence $\alpha$ is algebraic over $\alpha$, a contradiction.
		
		Thus $\alpha^\omega$ is a ring in which the smallest element without an inverse is $\alpha$. Then $\alpha^\omega = 1 \oslash \alpha$.
		
		We prove by induction on $n$ that:
		
		\begin{itemize}
			\item[\tiny$\bullet$] $\alpha^{\omega + n} = \{ P(1 \oslash \alpha) \oplus Q(\alpha) \mid Q \in \alpha[x],\ P \in \alpha[x] \text{ of degree at most } n \}$;
			
			\item[\tiny$\bullet$] $\alpha^{\omega + n} \otimes a = \alpha^{\omega + n} \times a $, for every $ a < \alpha$;
			
			\item[\tiny$\bullet$] $\alpha^{\omega + n} = (\alpha^\omega)^{\boxedexp{n+1}}$.
			
		\end{itemize}

		For $n = 0$, for the first and third bullet point, we have proved this. The second bullet point will not need a base case.
		
		Assume it is true for every $ m < n$. We prove it for $n$.
		
		We know that $\alpha^{\omega+n} = \{ \alpha^{\omega+n-1} \times a_{n-1} + \dots + \alpha^\omega \times a_0 + a \mid a_0, \dots, a_{n-1} < \alpha, a < \alpha^\omega \}$.

		Because $\alpha^\omega, \alpha^{\omega+1}, \dots, \alpha^{\omega+n-1}$ are groups and $a < \alpha^\omega, \alpha^\omega \times a_0 + a < \alpha^{\omega+1}, \dots$, we have from Theorem~\ref{set} b) that
		$\alpha^{\omega+n-1} \times a_{n-1} + \dots + \alpha^\omega \times a_0 + a = (\alpha^{\omega+n-1} \times a_{n-1}) \oplus \dots \oplus (\alpha^\omega \times a_0) \oplus a$.

		But, by induction, we have that
		$\alpha^{\omega+n-1} \times a_{n-1} + \dots + \alpha^\omega \times a_0 + a = (\alpha^{\omega+n-1} \otimes a_{n-1}) \oplus \dots \oplus (\alpha^\omega \otimes a_0) \oplus a$.
		
		Since by induction we know that $\alpha^{\omega+m-1} = (\alpha^\omega)^{\boxedexp{m}}$, for every $1 \leq m \leq n$ and $\alpha^\omega = 1 \oslash \alpha$, it follows that: 
		
		\begin{align*}
			\alpha^{\omega+n} &= \{ (\alpha^{\omega +n-1} \otimes a_{n-1}) \oplus \dots \oplus (\alpha^\omega \otimes a_0) \oplus a \mid a_0, \dots, a_{n-1} < \alpha, a < \alpha^\omega \} \\ &
			= \{ P(1 \oslash \alpha) \oplus Q(\alpha) \mid Q \in \alpha[x],\ P \in \alpha[x] \text{ of degree at most } n \}.
		\end{align*}

		Let $a<\alpha$. To prove that $\alpha^{\omega+n} \otimes a = \alpha^{\omega+n} \times a$,
		assume that it is true for any $a' < a$.

		We know that:
		
		\begin{align*}
			\alpha^{\omega+n} \otimes a &= \mex \{ (\beta \otimes a) \oplus (\alpha^{\omega+n} \otimes a') \oplus (\beta \otimes a') \mid \beta < \alpha^{\omega+n}, a' < a \} \\ &
			= \mex \{ (\alpha^{\omega+n} \times a') \oplus (\beta \otimes (a \oplus a')) \mid \beta < \alpha^{\omega+n}, a' < a \}.
		\end{align*}

		Let $a'<a$. Since $a \oplus a' \neq 0$ and $a \oplus a'$ has an inverse in $\alpha$.
		Put $ \zeta := a \oplus a'$. 
		
		Since $\alpha^{\omega+n} = \{ P(1 \oslash \alpha) \oplus Q(\alpha) \mid Q \in \alpha[x],\ P \in \alpha[x] \text{ of degree at most } n \}$ and $f: \alpha[x] \to \alpha[x]$ defined by $f(P)(x) = P(x) \otimes \zeta$ is bijective and degree-preserving, it follows that $\{ \beta \otimes \zeta \mid \beta < \alpha^{\omega+n} \} = \alpha^{\omega+n}$.
		
		Thus $\alpha^{\omega+n} \otimes a = \mex \{ (\alpha^{\omega+n} \times a') \oplus \beta \mid \beta < \alpha^{\omega+n}, a' < a \}$
		$= \mex \{ (\alpha^{\omega+n} \times a') + \beta \mid \beta < \alpha^{\omega+n}, a' < a \} = \alpha^{\omega+n} \times a$,
		because $\alpha^{\omega+n}$ is a group and from Theorem~\ref{set} b).
		
		Since $\alpha$ is a group, it follows from Theorem~\ref{thm-grup} that $\alpha^{\omega+n}$ is a group. We prove that $(\alpha^\omega, \alpha^{\omega+n-1})$ is the lexicographically least pair of elements from $\alpha^{\omega+n}$ with the Nim product greater than or equal to $\alpha^{\omega+n}$.
		From the characterizations of $\alpha^{\omega+n-1}, \ \alpha^{\omega+n}, \ \alpha^\omega$ and from the fact that $\alpha^\omega = 1 \oslash \alpha$, we have that for any $ \beta < \alpha^\omega, \gamma < \alpha^{\omega+n}: \beta \otimes \gamma < \alpha^{\omega+n}$ and for any $ \beta < \alpha^{\omega+n-1}: \alpha^\omega \otimes \beta < \alpha^{\omega+n}$.
		
		It remains to prove that $\alpha^\omega \otimes \alpha^{\omega+n-1} \geq \alpha^{\omega+n}$.
		By induction, $\alpha^{\omega+n-1} = (\alpha^\omega)^{\boxedexp{n}}$, so $\alpha^\omega \otimes \alpha^{\omega+n-1} = (\alpha^\omega)^{\boxedexp{n+1}} = (1 \oslash \alpha)^{\boxedexp{n+1}}$.
		If, towards a contradiction: $(1 \oslash \alpha)^{\boxedexp{n+1}} = P(1 \oslash \alpha) \oplus Q(\alpha)$, where $Q \in \alpha[x]$ and $P \in \alpha[x]$ of degree at most $n$, then, multiplying by $\alpha^{\boxedexp{n+1}}$, it follows that there exists a nonzero $ R \in \alpha[x]$ such that $R(\alpha) = 0$, a contradiction with the fact that $\alpha$ is transcendental.
		Then, $\alpha^{\omega+n} = \alpha^\omega \otimes \alpha^{\omega+n-1} = (\alpha^\omega)^{\boxedexp{n+1}}$ and the induction is finished and the verification step for the induction on $\alpha'$ is proved.
		
		Let us prove the induction step:

		If $\alpha'$ is a successor ordinal, let $\alpha' = \beta^+$.
		
		Then:
		\begin{align*}
			\alpha^{\omega + \omega \times \alpha'} &= \sup \{ \alpha^{\omega + \omega \times \beta + n} \mid n \in \omega \} \\ &
			= \{P(1 \oslash (\alpha + \beta)) \oplus \bigoplus_{i=1}^m P_i(1 \oslash (\alpha + \beta_i)) \oplus Q(\alpha) \mid Q \in \alpha[x],\  P \in \alpha[x],\ m \geq 0, \ P_i \in \alpha[x], \\ & \quad  \text{ for every } i \in \{1,\dots, m\},\  \beta_i < \beta,  \text{ for every } i \in \{1,\dots, m\} \} \\ &
			= \alpha [\alpha, \{ 1 \oslash (\alpha + \beta) \mid \beta < \alpha' \}] = \{ P(\alpha) \oslash (\bigotimes_{i=1}^m (\alpha \oplus \beta_i)^{\boxedexp{m_i}}) \mid m \geq 0,\ P \in \alpha[x], \ \beta_i < \alpha',\  \\ & \quad m_i \in \mathbb{N}^*,  \text{ for every } i \in \{1,\dots, m\} \} \text{ is a ring}.
		\end{align*}

		If $\alpha'$ is a limit ordinal, then:
		\begin{align*}
			\alpha^{\omega + \omega \times \alpha'} &= \sup \{ \alpha^{\omega + \omega \times \beta} \mid \beta < \alpha' \} \\ &
			= \{ \bigoplus_{i=1}^m P_i(1 \oslash (\alpha + \beta_i)) \oplus Q(\alpha) \mid Q \in \alpha[x],\ m \geq 0,\ P_i \in \alpha[x], \text{ for every } i \in \{1,\dots, m\},\\ & \quad \beta_i < \alpha', \text{ for every } i \in \{1,\dots, m\} \} \\ &
			= \alpha [\alpha, \{ 1 \oslash (\alpha + \beta) \mid \beta < \alpha' \}] = \{ P(\alpha) \oslash (\bigotimes_{i=1}^m (\alpha \oplus \beta_i)^{\boxedexp{m_i}}) \mid m \geq 0,\ P \in \alpha[x],\ \beta_i < \alpha',\\ & \quad m_i \in \mathbb{N}^*, \text{ for every } i \in \{1,\dots, m\} \} \text{ is a ring}.
		\end{align*}

		We prove the other part of the induction regardless of whether $\alpha'$ is a successor or limit ordinal.
		
		We know from induction that $1 \oslash (\alpha + \beta) = \alpha^{\omega + \omega \times \beta}$, for every $ \beta < \alpha'$.
		Then, any ordinal smaller than $\alpha + \alpha'$ has an inverse in $\alpha^{\omega + \omega \times \alpha'}$.
		Assume towards a contradiction that $\alpha + \alpha'$ has an inverse in $\alpha^{\omega + \omega \times \alpha'}$.

		We know that $\alpha + \alpha' = \alpha \oplus \alpha'$, because $\alpha$ is a group.
		Then, $(\alpha \oplus \alpha') \otimes (P(\alpha) \oslash (\bigotimes_{i=1}^m (\alpha \oplus \beta_i)^{\boxedexp{m_i}})) = 1$, for some $P \in \alpha[x], \beta_i < \alpha', m_i \in \mathbb{N}^*, i \in \{1,\dots, m\}$.
		Thus $(\alpha \oplus \alpha') \otimes P(\alpha) = \bigotimes_{i=1}^m (\alpha \oplus \beta_i)^{\boxedexp{m_i}}$.
		
		Let $Q(x) = (x \oplus \alpha') \otimes P(x)$ and $R(x) = \bigotimes_{i=1}^m (x \oplus \beta_i)^{\boxedexp{m_i}}$, where $Q, R \in \alpha[x]$.

		Since $\alpha' \neq \beta_i$, for every $ i \in \{0,\dots, m\} $, we have that $ Q \neq R $, so $ Q \oplus R \neq 0,\  Q \oplus R \in \alpha[x]$.
		But $(Q \oplus R)(\alpha) = 0$, a contradiction with the fact that $\alpha$ is transcendental.
		Thus $\alpha + \alpha'$ does not have an inverse in $\alpha^{\omega + \omega \times \alpha'}$, from Theorem~\ref{set} e) it follows that $\alpha^{\omega + \omega \times \alpha'} \otimes (\alpha + \alpha') = 1$.
		
		We prove by induction on $n$ that:
		\begin{itemize}
			\item[\tiny$\bullet$] $\alpha^{\omega + \omega \times \alpha' + n} = \{ P(1 \oslash (\alpha + \alpha')) \oplus \bigoplus_{i=1}^m P_i(1 \oslash (\alpha + \beta_i)) \oplus Q(\alpha) \mid Q \in \alpha[x],\ P \in \alpha[x] \text{ of degree at most } n,\ m \geq 0,\ P_i \in \alpha[x], \text{ for every } i \in \{1,\dots, m\},\ \beta_i < \alpha', \text{ for every } i \in \{1,\dots, m\} \}$;
			
			\item[\tiny$\bullet$] $\alpha^{\omega + \omega \times \alpha' + n} \otimes a = \alpha^{\omega + \omega \times \alpha' + n} \times a$, for every $ a < \alpha$; 
			\item[\tiny$\bullet$] $\alpha^{\omega + \omega \times \alpha' + n} = (\alpha^{\omega + \omega \times \alpha'})^{\boxedexp{n+1}}$.
			
		\end{itemize}
		
		For $n = 0$, for the first and third bullet point, we have proved this. The second bullet point will not need a base case.

		Assume it is true for any $m \leq n-1$ and we prove it for $n$.
		We have that
		$\alpha^{\omega + \omega \times \alpha' + n} = \{ \alpha^{\omega + \omega \times \alpha' + n-1} \times a_{n-1} + \dots + \alpha^{\omega + \omega \times \alpha'} \times a_0 + a \mid a_0, \dots, a_{n-1} < \alpha,\ a < \alpha^{\omega + \omega \times \alpha'} \}$.

		Since $\alpha^\beta$ is a group for any ordinal $ \beta$, using Theorem~\ref{set} b) and the induction step, we have that
		$\alpha^{\omega + \omega \times \alpha' + n-1} \times a_{n-1} + \dots + \alpha^{\omega + \omega \times \alpha'} \times a_0 + a = (\alpha^{\omega + \omega \times \alpha' + n-1} \otimes a_{n-1}) \oplus \dots \oplus (\alpha^{\omega + \omega \times \alpha'} \otimes a_0) \oplus a$.
		
		Using that $\alpha^{\omega + \omega \times \alpha' + m} = (1 \oslash (\alpha + \alpha'))^{\boxedexp{m+1}}$ and the characterization of $\alpha^{\omega + \omega \times \alpha'}$, it follows that
		$\alpha^{\omega + \omega \times \alpha' + n} = \{ P(1 \oslash (\alpha + \alpha')) \oplus \bigoplus_{i=1}^m P_i(1 \oslash (\alpha + \beta_i)) \oplus Q(\alpha) \mid Q \in \alpha[x],\ P \in \alpha[x] \text{ of degree at most } n,\ m \geq 0, P_i \in \alpha[x], \text{ for every } i \in \{1,\dots, m\},\ \beta_i < \alpha', \text{ for every } i \in \{1,\dots, m\} \}$.
		
		We prove that $\alpha^{\omega + \omega \times \alpha' + n} \otimes a = \alpha^{\omega + \omega \times \alpha' + n} \times a$, for every $ a < \alpha$ by induction on $a$.
		We know that $\alpha^{\omega + \omega \times \alpha' + n} \otimes a = \mex \{ (\alpha^{\omega + \omega \times \alpha' + n} \otimes a') \oplus (\beta \otimes (a \oplus a')) \mid a' < a, \beta < \alpha^{\omega + \omega \times \alpha' + n} \}$.
		Because $a \oplus a' \in \alpha \setminus \{0\}$,\  $f: \alpha[x] \to \alpha[x],\ f(P)(x) = P(x) \otimes (a \oplus a')$ is bijective and degree-preserving, we have that
		$\{ \beta \otimes (a \oplus a') \mid \beta < \alpha^{\omega + \omega \times \alpha' + n} \} = \alpha^{\omega + \omega \times \alpha' + n}$.
		By induction, from the fact that $\alpha^{\omega + \omega \times \alpha' + n}$ is a group and from Theorem~\ref{set} b), we have that
		$\alpha^{\omega + \omega \times \alpha' + n} \otimes a = \mex \{ (\alpha^{\omega + \omega \times \alpha' + n} \times a') + \beta \mid a' < a, \ \beta < \alpha^{\omega + \omega \times \alpha' + n} \}$
		$= \alpha^{\omega + \omega \times \alpha' + n} \times a$.
		
		Since $\alpha$ is a group, $\alpha^{\omega + \omega \times \alpha' + n}$ is a group. We prove that $(\alpha^{\omega + \omega \times \alpha'}, \alpha^{\omega + \omega \times \alpha' + n-1})$ is the lexicographically least pair of elements from $\alpha^{\omega + \omega \times \alpha' + n}$ with the Nim product greater than or equal to $\alpha^{\omega + \omega \times \alpha' + n}$.
		
		Since $\alpha^{\omega + \omega \times \alpha'} = 1 \oslash (\alpha + \alpha')$ and from the characterization of $\alpha^{\omega + \omega \times \alpha' + n}$ and $\alpha^{\omega + \omega \times \alpha' + n-1}$, it follows that $\alpha^{\omega + \omega \times \alpha'} \otimes \beta \in \alpha^{\omega + \omega \times \alpha' + n}$, for every $ \beta < \alpha^{\omega + \omega \times \alpha' + n-1}$.
		From the characterization of $\alpha^{\omega + \omega \times \alpha' + n}$ and $\alpha^{\omega + \omega \times \alpha'}$ it follows that $\beta \otimes \gamma \in \alpha^{\omega + \omega \times \alpha' + n}$, for every $ \beta < \alpha^{\omega + \omega \times \alpha'},\  \gamma < \alpha^{\omega + \omega \times \alpha' + n}$.
		
		Assume towards a contradiction that $\alpha^{\omega + \omega \times \alpha'} \otimes \alpha^{\omega + \omega \times \alpha' + n-1} = P(\alpha) \oslash ((\bigotimes_{i=1}^m (\alpha \oplus \beta_i)^{\boxedexp{m_i}}) \otimes (\alpha \oplus \alpha')^{\boxedexp{n'}}),$ where $P \in \alpha[x]$,\ $m_i \in \mathbb{N}^*$,\ $\beta_i < \alpha'$, for every $ i \in \{1,\dots, m\}$, $n' \leq n$.
		From induction $\alpha^{\omega + \omega \times \alpha'} \otimes \alpha^{\omega + \omega \times \alpha' + n-1} = (1 \oslash (\alpha + \alpha'))^{\boxedexp{n+1}}$.
		
		The equality becomes $P(\alpha) \otimes (\alpha + \alpha')^{\boxedexp{n - n' + 1}} = \bigotimes_{i=1}^m (\alpha + \beta_i)^{\boxedexp{m_i}}$, which is in contradiction with $\alpha$ transcendental, because $\alpha' \neq \beta_i$, for every $ i \in \{1,\dots, m\}$ and $n - n' + 1 > 0$.
		Thus, the induction is finished.
		
		Therefore:
		\begin{align*}
			\alpha^{\omega + \omega \times \alpha} &= \sup \{ \alpha^{\omega + \omega \times \alpha' + n} \mid \alpha' < \alpha, n \in \omega \} \\ &
			= \{ \bigoplus_{i=1}^m P_i(1 \oslash (\alpha + \beta_i)) \oplus Q(\alpha) \mid Q \in \alpha[x],\ m \geq 0,\ P_i \in \alpha[x],\\ & \quad \text{ for every } i \in \{0,\dots, m\},\ \beta_i < \alpha, \text{ for every } i \in \{1,\dots, m\} \} \\ &
			= \{ P(\alpha) \oslash (\bigotimes_{i=1}^m (\alpha \oplus \beta_i)^{\boxedexp{m_i}}) \mid m \geq 0,\ P \in \alpha[x],\ \beta_i < \alpha,\ m_i \in \mathbb{N}^*, \text{ for every } i \in \{1,\dots, m\} \}.
		\end{align*}

		But $\alpha$ is an algebraically closed field, so $\alpha^{\omega + \omega \times \alpha} = \{ P(\alpha) \oslash Q(\alpha) \mid P,\ Q \in \alpha[x], \ Q \neq 0 \} = \alpha(\alpha)$, so it is a field.
		
		Let $\xi$ be a field ordinal greater than $\alpha$.
		Then $\alpha(\alpha) \subseteq \xi$, so $\alpha^{\omega + \omega \times \alpha} \leq \xi$.
		We have proved that the next field ordinal after $\alpha$ is $\alpha^{\omega + \omega \times \alpha}$.
	\end{proof}

	Next, we treat the case where $\alpha$ is not algebraically closed. This case has a shorter proof, because we already know how ordinals within $\alpha$ add and multiply, due to Theorem~\ref{set} f).

	\begin{proposition} \label{urm-corp-2}
		Let $\alpha$ be a field ordinal that is not transcendental. Then the next field ordinal is $\alpha^n$, where $n$ is the degree of the lexicographically least polynomial from $\alpha[x]$ with no Nim roots in $\alpha$. Moreover, $ \alpha^n = \alpha(\alpha) $.
	\end{proposition}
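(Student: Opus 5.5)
By Theorem~\ref{set} g), since $\alpha$ is a field that is not algebraically closed, $\alpha$ is a Nim root of the lexicographically least polynomial $f \in \alpha[x]$ with no Nim root in $\alpha$. This $f$ is also the lexicographically least irreducible polynomial, so $f$ is the minimal polynomial of $\alpha$ over the field $\alpha$. If $n = \deg f$, then $\alpha(\alpha) = \alpha[\alpha]$ has degree $n$ over $\alpha$. Its elements are exactly the expressions $\bigoplus_{i=0}^{n-1} \alpha^{\boxedexp{i}} \otimes a_i$ with $a_i < \alpha$, each such element being written in exactly one way. The plan is to identify this set with the ordinal $\alpha^n$ and then argue minimality.

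\textbf{Identifying $\alpha(\alpha)$ with $\alpha^n$.} By the choice of $f$ as lexicographically least without a root, every polynomial over $\alpha$ of degree at most $n-1$ has a Nim root in $\alpha$. (Lexicographic order compares degree first, and $n \geq 2$ since degree-1 polynomials always have roots.) Hence Theorem~\ref{set} f) applies with $n-1$, giving
\[
\bigoplus_{i=0}^{n-1} (\alpha^{\boxedexp{i}} \otimes a_i) = \sum_{i=n-1}^{0} (\alpha^i \times a_i)
\]
for all $a_0, \dots, a_{n-1} < \alpha$. By Cantor normal form in base $\alpha$, the right-hand sums range over exactly the ordinals below $\alpha^n$, so $\alpha^n = \{ P(\alpha) \mid P \in \alpha[x],\ \deg P \leq n-1 \} = \alpha(\alpha)$. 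In particular $\alpha^n$ is a field ordinal.

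\textbf{Minimality.} Let $\xi > \alpha$ be any field ordinal. Then $\xi$ is a subfield of $\ordinali$ that contains both $\alpha$ (as a set of elements) and the element $\alpha$. Hence $\xi \supseteq \alpha(\alpha) = \alpha^n$, so $\xi \geq \alpha^n$. Therefore $\alpha^n$ is the next field ordinal after $\alpha$.

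\textbf{Main obstacle.} The only delicate point is the hypothesis check for Theorem~\ref{set} f): we must confirm that "lexicographically least polynomial with no root" forces all polynomials of smaller degree to have roots in $\alpha$. This depends on the convention, assumed here as in \cite{siegel}, that the lexicographic order on polynomials compares degree first. A secondary point is that Theorem~\ref{set} f) is stated with $n$ and sums up to index $n$, so it must be applied with index $n-1$ rather than $n$. With these checks, the remaining steps are routine.
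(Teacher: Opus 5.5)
Your proposal is correct and follows essentially the same route as the paper. Theorem~\ref{set} g) makes $\alpha$ a root of the least rootless polynomial $f$ of degree $n$; Theorem~\ref{set} f), applied with index $n-1$, identifies the ordinals below $\alpha^n$ (via their base-$\alpha$ expansions) with the Nim-polynomial expressions of degree at most $n-1$ in $\alpha$, i.e.\ with $\alpha(\alpha)$; and minimality follows because every field ordinal $\xi > \alpha$ contains $\alpha(\alpha)$. Your explicit check that the degree-first lexicographic order supplies the hypothesis of f) is a detail the paper leaves implicit.
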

	
	\begin{proof}
		Let $f$ be the lexicographically least polynomial from $\alpha[x]$ with no Nim roots in $\alpha$. It follows that $f$ is monic. Let:
		$$f(x) = x^{\boxedexp{n}} \oplus a_{n-1} \otimes x^{\boxedexp{n-1}} \oplus \dots \oplus a_0 \text{, where }  a_0, \dots, a_{n-1} < \alpha.$$
		From Theorem~\ref{set} g) we have that $f(\alpha) = 0$.

		But $\alpha^n = \{ \alpha^{n-1} \times b_{n-1} + \dots + b_0 \mid b_0, \dots, b_{n-1} < \alpha \}$
		$= \{ \alpha^{\boxedexp{n-1}} \otimes b_{n-1} \oplus \dots \oplus b_0 \mid b_0, \dots, b_{n-1} < \alpha \}$
		$= \alpha(\alpha)$, from Theorem~\ref{set} f).
		So, $\alpha^n$ is a field.

		Let $\xi$ be a field ordinal greater than $\alpha$.
		Then $\alpha(\alpha) \subseteq \xi$, so $\alpha^n \leq \xi$.
		We have proved that the next field ordinal after $\alpha$ is $\alpha^n$.
	\end{proof}
	
	\begin{proposition} \label{corpuri}
		Let $\alpha,  \beta$ field ordinals, $\alpha \leq \beta$. Then there is an ordinal $\gamma$ such that $\beta = \alpha^{\gamma}$.
	\end{proposition}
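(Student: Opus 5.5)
We argue by transfinite induction along the field ordinals between $\alpha$ and $\beta$. The aim is the stronger claim that every field ordinal $\beta \geq \alpha$ is an ordinal power $\alpha^{\gamma}$. The base case $\beta = \alpha$ is $\gamma = 1$.

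For the successor step, let $\beta$ be a field ordinal of the form $\alpha^{\gamma}$, and let $\beta'$ be the next field ordinal. If $\beta$ is transcendental, Proposition~\ref{urm-corp-1} gives
\[
\beta' = \beta^{\omega + \omega \times \beta} = (\alpha^{\gamma})^{\omega+\omega\times\beta} = \alpha^{\gamma \times (\omega + \omega \times \beta)},
\]
using $(\alpha^\beta)^\gamma = \alpha^{\beta\times\gamma}$. If $\beta$ is not transcendental, Proposition~\ref{urm-corp-2} gives $\beta' = \beta^{n} = \alpha^{\gamma \times n}$. In either case $\beta'$ is again a power of $\alpha$.

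For the limit step, suppose $\beta > \alpha$ is a field ordinal that is not the next field ordinal after any field ordinal. We claim $\beta = \sup S$, where $S$ is the set of field ordinals in $[\alpha, \beta)$.
\begin{enumerate}[a)]
\item $S$ is nonempty and has no maximum. If $\delta$ were its maximum, the next field ordinal after $\delta$ would be $\leq \beta$, hence equal to $\beta$, which is excluded.
\item $\sup S = \beta$. Put $\sigma := \sup S \leq \beta$. A union of a chain of subfields is a subfield, and Nim operations on elements below $\sigma$ are computed inside some field ordinal in $S$. So $\sigma$ is itself a field ordinal with $\alpha \leq \sigma \leq \beta$. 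If $\sigma < \beta$ then $\sigma \in S$, and $\sigma$ would be the maximum of $S$, contradicting a). Hence $\sigma = \beta$.
\end{enumerate}
By the induction hypothesis each element of $S$ equals $\alpha^{\gamma_\delta}$ for some $\gamma_\delta$. The map $\gamma \mapsto \alpha^\gamma$ is strictly increasing and continuous, since $\alpha \geq 2$ (we have $\alpha \geq 2$ as $\alpha$ is a field ordinal). Setting $\gamma := \sup \gamma_\delta$, continuity gives $\alpha^{\gamma} = \sup_\delta \alpha^{\gamma_\delta} = \beta$.

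To make this a genuine induction, index the field ordinals $\geq \alpha$ increasingly as $(\phi_\xi)_\xi$ and induct on $\xi$. The successor and limit cases above are then exactly the two cases of this induction.

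The main obstacle is the limit case. One must check carefully that a field ordinal with no immediate predecessor among field ordinals is the supremum of the smaller ones. This reduces to the fact that a supremum of field ordinals is a field ordinal, which is a routine closure check.

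The degenerate case $\alpha = 1$ needs separate handling, and we assume field ordinals are $\geq 2$. If $1$ counts as a field ordinal, the statement fails for $\alpha = 1 < \beta$, since $1^{\gamma} = 1$. So we read the statement for $\alpha \geq 2$, or note that $2 = 2^1$ handles the smallest field.
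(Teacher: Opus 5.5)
Your proof is correct and is the same argument as the paper's, which just cites Propositions~\ref{urm-corp-1} and~\ref{urm-corp-2} for the successor step and the fact that a supremum of field ordinals is a field ordinal for the limit step. You have simply made the transfinite induction and the continuity of $\gamma \mapsto \alpha^{\gamma}$ explicit, and your caveat about $\alpha = 1$ is unnecessary, since $1 = \{0\}$ is not a field and so every field ordinal is at least $2$.
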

	
	\begin{proof}
		It follows from Proposition~\ref{urm-corp-1}, Proposition~\ref{urm-corp-2} and from the fact that the supremum of a set of field ordinals is also a field ordinal. 
	\end{proof}
	
	\begin{proposition} \label{urm-corp-dupa-inch-alg}
		If $\alpha$ is a transcendental ordinal, the next field ordinal is $\alpha^{\alpha}$. Moreover, $ \alpha^{\alpha} = \alpha(\alpha) $.
	\end{proposition}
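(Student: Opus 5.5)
The plan is to reduce the statement to Proposition~\ref{urm-corp-1}. That result already says that the next field ordinal after a transcendental $\alpha$ is $\alpha^{\omega+\omega\times\alpha}$, and that this ordinal equals $\alpha(\alpha)$. So it suffices to prove the purely ordinal-arithmetic identity $\omega+\omega\times\alpha=\alpha$ for every transcendental ordinal $\alpha$. Both claims of the statement then follow by substituting into Proposition~\ref{urm-corp-1}.

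To get a handle on $\alpha$, I first locate it relative to $\tau$.
\begin{enumerate}[1.]
\item By Theorem~\ref{primul-transcendent}, $\alpha\geq\tau$.
\item Both $\tau$ and $\alpha$ are field ordinals, so Proposition~\ref{corpuri} gives an ordinal $\gamma\geq 1$ with $\alpha=\tau^{\gamma}$.
\item Since $\tau=\omega^{\omega^\omega}=2^{\omega\times\omega^\omega}=2^{\omega^{\omega}}$ (using $(\alpha^\beta)^\gamma=\alpha^{\beta\times\gamma}$ and $\omega=2^\omega$), we get $\alpha=2^{a}$ with $a:=\omega^\omega\times\gamma$.
\end{enumerate}

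Next I show $\omega\times\alpha=\alpha$. We have $\omega\times\alpha=2^{\omega}\times 2^{a}=2^{\omega+a}$, so it is enough to check $\omega+a=a$. Since $\omega+\omega^\omega=\omega^\omega$ (for instance as in the proof of Proposition~\ref{epsilon-0-tau}: $\omega+\omega^{\omega}=\omega\times(1+\omega^{\omega})=\omega\times\omega^\omega=\omega^\omega$, by Lemma~\ref{1+v=v}), it follows that
\[\omega+\omega^\omega\times\gamma=\omega+\omega^\omega+\omega^\omega\times\gamma'=\omega^\omega\times\gamma\]
where $\gamma=1+\gamma'$. Hence $\omega\times\alpha=\alpha$.

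It remains to show $\omega+\alpha=\alpha$. Here $a=\omega^\omega\times\gamma$ is a limit ordinal and $\omega<2^a$, so Lemma~\ref{ineg-ord-pt-claim} gives $\omega+2^{a}=2^a$. Combining the two steps, $\omega+\omega\times\alpha=\omega+\alpha=\alpha$, hence $\alpha^{\omega+\omega\times\alpha}=\alpha^\alpha$, and Proposition~\ref{urm-corp-1} finishes the proof.

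The only delicate point is the bookkeeping in the reduction $\alpha=2^{\omega^\omega\times\gamma}$. Absorbing the $\omega$ requires an exponent that is at least $\omega^2$ in the appropriate sense, not merely a limit ordinal; the lower bound $\alpha\geq\tau$ from Theorem~\ref{primul-transcendent}, combined with Proposition~\ref{corpuri}, is exactly what provides this.
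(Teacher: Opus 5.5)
Your proof is correct and follows essentially the same route as the paper: reduce to Proposition~\ref{urm-corp-1}, use Theorem~\ref{primul-transcendent} and Proposition~\ref{corpuri} to write $\alpha=\tau^{\gamma}=\omega^{\omega^\omega\times\gamma}$, and then verify $\omega+\omega\times\alpha=\alpha$ by ordinal arithmetic. The differences are cosmetic: you work in base $2$ and absorb the leading $\omega$ via Lemma~\ref{ineg-ord-pt-claim}, whereas the paper computes $\omega+\omega\times\alpha=\omega\times(1+\alpha)=\omega\times\omega^{\delta}=\omega^{1+\delta}=\omega^{\delta}$ with $\delta=\omega^\omega\times\gamma$.
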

	
	\begin{proof}
	
	We know that the first transcendental ordinal is $\omega^{\omega^{\omega}}$.
 	From Proposition~\ref{corpuri}, it follows that $\alpha = \omega^{\omega^{\omega} \times \beta}$ for some ordinal $\beta$.
	
	Thus, $\alpha = \omega^{\omega^{\omega} \times \beta}=\omega^{\delta}$, where $\delta = \omega^{\omega} \times \beta$.  It follows that $\alpha$ and $\delta$ are limit ordinals.

	We know that the next field is $\alpha^{\omega + \omega \times \alpha}$. But $\omega + \omega \times \alpha = \omega \times (1+ \alpha) = \omega \times \alpha = \omega \times \omega^{\delta} = \omega^{1+\delta} = \omega^{\delta} = \alpha $, because $\alpha$ and $\delta$ are limit ordinals. Thus the next field is $\alpha^{\alpha}$.
	
\end{proof}

	\section{The next quadratically closed field ordinal} \label{sec: 3}
	
	\subsection{About the next transcendental} \label{subsec: 3.1}
	
	Proposition~\ref{urm-corp-1} and Proposition~\ref{urm-corp-2} imply that $\alpha(\alpha)$ is an ordinal, for any field ordinal $\alpha$. This result is the key to the next proposition.

	\begin{proposition} \label{inch-alg-e-ordinal}
		Let $u$ be a field ordinal. Then the algebraic closure of $u$ in $\ordinali$ is an ordinal and is an algebraic closure of $u$.
	\end{proposition}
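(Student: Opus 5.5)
The plan is to let $A$ denote the set of ordinals that are algebraic over $u$ in $\ordinali$, i.e. the relative algebraic closure of $u$ inside the Field $\ordinali$. I will show that $A$ is an ordinal, by exhibiting it as the union of a chain of field ordinals. First, the standard facts. Since $\ordinali$ is algebraically closed, every polynomial in $u[x]$ splits in $\ordinali$. So $A$ is a subfield of $\ordinali$ that is algebraic over $u$ and in which every polynomial over $u$, and hence over $A$, splits. Thus $A$ is algebraically closed, and $A$ is an algebraic closure of $u$. It remains to prove that $A$ is an initial segment of the ordinals, and that it is a set.

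For the ordinal part, I would build a transfinite increasing sequence of field ordinals $u_0 = u$, and $u_{\xi+1} = u_\xi(u_\xi)$ as long as $u_\xi$ is not algebraically closed. By Propositions~\ref{urm-corp-1} and \ref{urm-corp-2}, $u_{\xi+1}$ is again an ordinal, and indeed the next field ordinal. At limits I take the supremum $u_\lambda = \bigcup_{\xi<\lambda} u_\xi$, which is a field ordinal as a union of a chain of fields. I then claim, by induction on $\xi$, that $u_\xi \subseteq A$ whenever the construction has not yet passed an algebraically closed stage.

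\begin{itemize}
\item Limit step: this is immediate.
\item Successor step: if $u_\xi \subseteq A$ and $u_\xi$ is not algebraically closed, Proposition~\ref{urm-corp-2} applies. It says that $u_\xi$ is a root of the lexicographically least polynomial over $u_\xi$ with no root in $u_\xi$ (Theorem~\ref{set} g)), and that $u_{\xi+1} = u_\xi^n = u_\xi(u_\xi)$ is a finite extension of $u_\xi$. Hence $u_{\xi+1}$ is algebraic over $u_\xi$, and so over $u$ by transitivity of algebraicity. Therefore $u_{\xi+1} \subseteq A$.
\end{itemize}

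Note that we never use Proposition~\ref{urm-corp-1} here. That proposition treats the transcendental case, which is exactly where we stop.

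Next I must show the process stops, at the first $\xi$ with $u_\xi$ algebraically closed. The sequence is strictly increasing and is contained in $A$. Also $A$ is a set, since its cardinality is at most $\max(|u|,\aleph_0)$: it is contained in the set of roots of the set-many polynomials over $u$. (Lemma~\ref{corp-num} covers the countable case, and the general bound is standard.) So the sequence cannot continue through all ordinals. Hence there is a least $\xi$ such that $u_\xi$ is algebraically closed, and then $u_\xi$ is a transcendental ordinal with $u \subseteq u_\xi \subseteq A$. Finally, $A \subseteq u_\xi$: every element of $A$ is algebraic over $u \subseteq u_\xi$, and $u_\xi$ is algebraically closed, so every root of its minimal polynomial over $u$ already lies in $u_\xi$. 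Thus $A = u_\xi$ is an ordinal.

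The main obstacle I expect is the termination and size argument. It is what guarantees that we reach an algebraically closed stage rather than running through all ordinals. This needs the cardinality bound on $A$, which I would prove by counting polynomials over $u$ and their finitely many roots. A secondary subtlety is checking that the limit stages are fields and remain algebraic over $u$; both are clear from the union description.
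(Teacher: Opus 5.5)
Your proposal is correct and is essentially the paper's proof. It uses the same chain $u_0=u$, $u_{\xi+1}=u_\xi(u_\xi)$ with suprema at limits, keeps it inside the algebraic closure via Theorem~\ref{set}~g) and Proposition~\ref{urm-corp-2}, and halts it because that closure is a set while the ordinals form a proper class. The paper phrases this by contradiction (if $F$ were not an ordinal the chain would exhaust $\ordinali$) and then proves algebraic closedness separately with Theorem~\ref{set}~g). Your direct version instead yields $A=u_\xi$, so algebraic closedness comes for free, and your opening appeal to the algebraic closedness of $\ordinali$ is redundant; this is just as well, since the paper only records that fact as a consequence of this very proposition.
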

	
	\begin{proof}
		Let $F \subset \ordinali$ be the algebraic closure of $u$ in $\ordinali$.
		Assume towards a contradiction that  $F$ is not an ordinal.
		
		Let $\alpha$ be a field ordinal such that $u \subset \alpha \subset F$. If $\alpha$ is transcendental, then $F\subset \alpha$ so $F=\alpha$. Since $F$ is not an ordinal, $\alpha$ is not transcendental. Then from Theorem~\ref{set} g), it follows that $\alpha$ is a Nim root of the lexicographically least irreducible polynomial over $\alpha$, so $\alpha \in F$. 
		
		For all ordinals $\alpha$, we define the ordinals $u_{\alpha}$ such that $u_0 =u$, $u_{\beta + 1}= u_{\beta}(u_{\beta})$ and $u_{\alpha}= \sup\{u_{\beta} \mid \beta < \alpha \}$, for $\alpha$ limit ordinal. 
		We prove by induction that $u_{\alpha} \geq \alpha $. For $\alpha \in \{0,1,2\} $ is obvious. For $\beta>1$, since $u_{\beta+1} \geq u_{\beta}^2 \geq \beta^2$, from Proposition~\ref{corpuri}, it follows that $u_{\beta+1} \geq \beta +1$. For $\alpha$ limit ordinal, we have that $u_{\alpha}= \sup\{u_{\beta} \mid \beta < \alpha \} \geq \sup\{\beta \mid \beta < \alpha\} = \alpha $.
		
		We prove by induction that $u_{\alpha} \subset F$ from which it follows that $u_{\alpha} \in F$, $\alpha \in F$. 
		
		We know that $u_0 \subset F$. 
		
		If $\alpha = \beta + 1$ and $u_{\beta} \subset F $, then $u_{\beta} \in F $, so $u_{\beta}(u_{\beta}) \subset F$, so $u_{\beta+1} \subset F$. 
		
		If $\alpha$ is a limit ordinal and $u_{\beta} \subset F $, for all $\beta < \alpha$, then $\sup \{u_{\beta} \mid \beta < \alpha \} \subset F $, so $u_{\alpha} \subset F$. 
		
		So $F = \ordinali$. But $F$ is a set and $\ordinali$ is a proper class, contradiction.
		
		We prove that $F$ is an algebraic closure of $u$.
		We have to prove that $F$ is algebraically closed. If it is not, then from Theorem~\ref{set} g), it follows that $F$ is a Nim root of the lexicographically least irreducible polynomial over $F$. So $F$ is algebraic over $F$. Then $u \subset F(F)$ is an algebraic extension, so $F$ is algebraic over $u$, so $F \in F$, contradiction.
	\end{proof}
	
	It follows that the next transcendental ordinal after $\tau$ is $\overline{\tau(\tau)}$ and that $\ordinali$ is an algebraically closed Field.
	
	An analogous result can be proven about the $n$-closure of a field ordinal.
	\begin{proposition} \label{n-inch-e-ordinal}
		Let $u$ be a field ordinal, $n \in \mathbb{N}^*$. Then the $n$-closure of $u$ in $\ordinali$ (which is an $n$-closure of $u$) is an ordinal.
	\end{proposition}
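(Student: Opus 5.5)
We follow the pattern of the proof of Proposition~\ref{inch-alg-e-ordinal}, with ``algebraic over $u$'' replaced by ``lying in every $n$-closed field between $u$ and $\overline{u}$''. By Proposition~\ref{inch-alg-e-ordinal}, the algebraic closure $A$ of $u$ in $\ordinali$ is an ordinal and an algebraic closure of $u$. So we work inside $A$ and set
\[ F = \bigcap_{E \text{ $n$-closed},\ u \subset E \subset A} E . \]
This $F$ is the $n$-closure of $u$ in $\ordinali$. It is itself $n$-closed: for a polynomial over $F$ of degree at most $n$, choose irreducible factors of degree at most $n$. Any root of such a factor lies in every $n$-closed $E \supseteq F$, up to the usual argument of choosing one root compatibly. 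The cleaner route is to note that $F$ equals the union of the chain built below, which is $n$-closed directly. Either way, $F$ is an $n$-closure of $u$ in the sense of the definition.

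\textbf{Main step.} Define a transfinite chain of field ordinals: $v_0 = u$, $v_{\beta+1} = v_\beta(v_\beta)$ whenever $v_\beta$ is not $n$-closed, and $v_\lambda = \sup_{\beta<\lambda} v_\beta$ at limits. Each $v_\beta$ is a field ordinal by Proposition~\ref{urm-corp-1}, Proposition~\ref{urm-corp-2}, and the fact that a supremum of field ordinals is a field ordinal. The chain is strictly increasing and contained in the set $A$, so it must stop at some $v_\theta$ that is $n$-closed.

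We show $v_\theta \subseteq F$ by induction: each $v_\beta \subseteq E$ for every $n$-closed $E$ with $u \subset E \subset A$.
\begin{itemize}
\item At limits this is immediate.
\item At successors, suppose $v_\beta \subseteq E$ and $v_\beta$ is not $n$-closed. Let $f$ be the lexicographically least polynomial over $v_\beta$ with no root in $v_\beta$. Its degree is at most $n$, because some polynomial of degree at most $n$ has no root, and degree is the first thing compared lexicographically. This is the point to check carefully against the convention in Theorem~\ref{set} g). By Theorem~\ref{set} g), $v_\beta$ is a root of $f$.
\end{itemize}

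The key difficulty is showing $v_\beta \in E$. $E$ is $n$-closed, so $f$ has some root in $E$, but we need the specific root $v_\beta$. Argue by minimality. Factor $f$ over $E$; since $f$ is irreducible over $v_\beta$ of degree $d \le n$, it is enough to show $E$ contains all roots of $f$. Inductively, after splitting off a root in $E$, the quotient still has degree at most $n$ over $E$ and so also has a root in $E$. Hence $f$ splits completely in $E$, and $v_\beta$, being a root of $f$ in $A$, lies in $E$. Therefore $v_\beta(v_\beta) = v_\beta^d$ as a set lies in $E$, which gives $v_{\beta+1} \subseteq E$.

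\textbf{Conclusion.} We now have $v_\theta \subseteq F$. Since $v_\theta$ is $n$-closed and lies between $u$ and $A$, it is one of the fields in the intersection, so $F \subseteq v_\theta$. Hence $F = v_\theta$ is an ordinal. The expected main obstacle is the successor step, namely ensuring $\deg f \le n$ and that the canonical root $v_\beta$, rather than merely some root, lies in every $n$-closed $E$. The splitting argument above resolves this.
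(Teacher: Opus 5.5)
Your proof is correct and follows essentially the paper's argument: the chain $v_{\beta+1}=v_\beta(v_\beta)$, Theorem~\ref{set} g) (with degree compared first, so $\deg f\le n$), and the fact that each new generator lies in every $n$-closed intermediate field. The differences are organizational. You argue directly: the chain stays inside the set $\overline{u}$, so it must reach an $n$-closed stage, and you obtain $F=v_\theta$ by double inclusion; the paper instead argues by contradiction, using $u_\alpha \geq \alpha$ to force $F=\ordinali$. Your splitting argument also spells out why ``thus $\alpha \in F$'' holds, which the paper leaves implicit, and it makes your hedged opening remark about $F$ being $n$-closed unnecessary.
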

	
	\begin{proof}
		Let $F \subset \ordinali$ be the $n$-closure of $u$ in $\ordinali$.
		Assume towards a contradiction that $F$ is not an ordinal.
		
		Let $\alpha$ be a field ordinal such that $u \subset \alpha \subset F$. Since $F$ is not an ordinal, $\alpha$ is not $n$-closed. Then, from Theorem~\ref{set} g), it follows that $\alpha$ is a root of the  smallest lexicographically irreducible polynomial over $\alpha$ which has degree at most $n$, thus $\alpha \in F$. 
		
		For each ordinal $\alpha$, we recursively define the ordinals $u_{\alpha}$ such that $u_0 =u$, $u_{\beta + 1}= u_{\beta}(u_{\beta})$ and $u_{\alpha}= \sup\{u_{\beta} \mid \beta < \alpha \}$, for $\alpha$ a limit ordinal. We have that $u_{\alpha} \geq \alpha $, for every ordinal $\alpha$.

		We prove by induction on $\alpha$ that $u_{\alpha} \subset F$ from which it follows that $u_{\alpha} \in F$. 
		
		We know that $u_0 \subset F$. 
		
		If $\alpha = \beta + 1$ and $u_{\beta} \subset F $, then $u_{\beta} \in F $, so $u_{\beta}(u_{\beta}) \subset F$, hence $u_{\beta+1} \subset F$. 
		
		If $\alpha$ is a limit ordinal and $u_{\beta} \subset F $, for every $\beta < \alpha$, then $\sup \{u_{\beta} \mid \beta < \alpha \} \subset F $, thus $u_{\alpha} \subset F$. 
		
		Therefore $F = \ordinali$. However, $F$ is the $n$-closure of an ordinal, hence a set, while $\ordinali$ is a proper class, contradiction.
	\end{proof}
	
	\begin{lemma} \label{reuniune-corp-n-inchis}
		Let $u$ be a field ordinal. Then $\overline{u} = \sup\{ \overline{u}^n \mid n \in \mathbb{N}^*  \}$.
	\end{lemma}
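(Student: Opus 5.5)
We need to show $\overline{u} = \sup\{ \overline{u}^n \mid n \in \mathbb{N}^* \}$, where by Proposition~\ref{inch-alg-e-ordinal} and Proposition~\ref{n-inch-e-ordinal} all of $\overline{u}$ and $\overline{u}^n$ are ordinals. The plan is to show that the union $S := \bigcup_{n \in \mathbb{N}^*} \overline{u}^n$ is an algebraically closed field lying between $u$ and $\overline{u}$. Since the supremum of a set of ordinals is their union, this gives $S = \sup\{ \overline{u}^n \mid n \in \mathbb{N}^* \}$, and we then conclude $S = \overline{u}$.

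\textbf{Step 1: the $n$-closures form a chain inside $\overline{u}$.} By definition, $\overline{u}^n$ is the intersection of all $n$-closed fields $F$ with $u \subset F \subset \overline{u}$; here $\overline{u}$ is the algebraic closure of $u$ in $\ordinali$. Two facts follow directly.
\begin{itemize}
\item $\overline{u}$ is itself such an $F$ for every $n$, so $\overline{u}^n \subset \overline{u}$.
\item Any $(n+1)$-closed field is $n$-closed, so the family of fields intersected for $n+1$ is contained in the family for $n$. Hence $\overline{u}^n \subset \overline{u}^{n+1}$.
\end{itemize}
Thus $S$ is an increasing union of subfields of $\overline{u}$ (each $\overline{u}^n$, being an intersection of fields, is a field). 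Therefore $S$ is a field with $u \subset S \subset \overline{u}$, and it equals the supremum of the ordinals $\overline{u}^n$.

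\textbf{Step 2: $S$ is algebraically closed.} Let $p \in S[X]$ be nonzero of degree $d \geq 1$. Its finitely many coefficients all lie in some $\overline{u}^m$, because the union is increasing. Put $k = \max(m,d)$. Then $p \in \overline{u}^k[X]$ has degree at most $k$.

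We need $\overline{u}^k$ to be $k$-closed. This holds because any intersection of $k$-closed subfields of the algebraically closed field $\overline{u}$ is again $k$-closed. Let me argue this carefully, since it is the main obstacle: the roots in different fields $F$ might a priori differ. Take $q \in \overline{u}^k[X]$ of degree at most $k$, and consider its finitely many roots in $\overline{u}$. Suppose no root lies in $\overline{u}^k$. Then for each root $r$ there is a $k$-closed field $F_r$ in the family with $r \notin F_r$. Fix one such field $F$ from the family; it is $k$-closed and contains $q$'s coefficients, so $q$ has some root $r_0 \in F$. By assumption $r_0 \notin \overline{u}^k$, so some other $F'$ in the family excludes $r_0$, and this alone yields no contradiction.

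So a direct intersection argument does not obviously work, and I would avoid relying on it. Instead I would argue as follows. Any nonconstant $q \in \overline{u}^k[X]$ of degree at most $k$ has an irreducible factor $g$ of degree at most $k$. Adjoining a root of $g$ gives a field extension of degree at most $k$. If $\overline{u}^k$ failed to contain a root, then by Theorem~\ref{set} g) applied to the ordinal field $\overline{u}^k$ (Proposition~\ref{n-inch-e-ordinal}), the ordinal $\overline{u}^k$ would be a root of some irreducible polynomial of degree at most $k$ over it. Then $\overline{u}^k(\overline{u}^k)$ would lie in every $k$-closed $F$ of the family: such an $F$ contains $\overline{u}^k$ and has a root of that irreducible polynomial, and since the extension is simple and generated by a root, the isomorphism type is fixed, so it embeds. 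This is exactly the mechanism used in the proof of Proposition~\ref{n-inch-e-ordinal}, and I would mirror it: the key point is that the element $\overline{u}^k$ itself would belong to $\overline{u}^k$, a contradiction.

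Granting that $\overline{u}^k$ is $k$-closed, $p$ has a root in $\overline{u}^k \subset S$. Hence $S$ is algebraically closed.

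\textbf{Step 3: conclusion.} The field $S$ is algebraically closed and sits between $u$ and $\overline{u}$. Since $\overline{u}$ is algebraic over $u$, it is algebraic over $S$, and an algebraically closed field has no proper algebraic extensions. Therefore $S = \overline{u}$, which is the claimed equality $\overline{u} = \sup\{ \overline{u}^n \mid n \in \mathbb{N}^* \}$.
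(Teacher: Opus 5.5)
Your overall plan is the paper's: the $\overline{u}^n$ form an increasing chain inside $\overline{u}$. A polynomial over the union has its coefficients in some $\overline{u}^k$ with $k$ at least its degree, so it has a root there. Hence the union is algebraically closed and equals $\overline{u}$. The paper simply takes for granted that $\overline{u}^k$ is $k$-closed.

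The problem is your justification of that step in Step~2, which does not work. From ``$F$ is $k$-closed and contains $\overline{u}^k$'' you only get that $F$ contains \emph{some} root $r$ of the irreducible polynomial $g$ that the ordinal $\overline{u}^k$ satisfies. This gives a $\overline{u}^k$-embedding $\overline{u}^k(\overline{u}^k)\to F$ sending $\overline{u}^k\mapsto r$. That is not containment: nothing forces the particular element $\overline{u}^k$ to lie in $F$. This is exactly the objection you raised against the direct intersection argument a few sentences earlier. So the conclusion $\overline{u}^k\in\overline{u}^k$ is unsupported, and Step~2 rests on an unproved ``granting that''.

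The missing idea is short and makes the detour through Theorem~\ref{set}~g) unnecessary. An $n$-closed field $F$ splits every nonconstant polynomial of degree at most $n$ over $F$ into linear factors: take a root, divide out the linear factor, and the quotient again has degree at most $n$, so repeat. Since $F\subset\overline{u}$, every root in $\overline{u}$ of such a polynomial already lies in $F$.

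Now take $q\in\overline{u}^k[X]$ nonconstant of degree at most $k$. For every $k$-closed $F$ with $u\subset F\subset\overline{u}$ we have $q\in F[X]$. So all roots of $q$ in $\overline{u}$ (there is at least one) lie in $F$, hence in the intersection $\overline{u}^k$. This proves that $\overline{u}^k$ is $k$-closed, and in fact that polynomials of degree at most $k$ split in it. That is what the paper's sentence ``all roots of $f$ are in $\overline{u}^m$'' uses.

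The same observation is what legitimately gives ``$\alpha\in F$'' in the proof of Proposition~\ref{n-inch-e-ordinal}, which you wanted to mirror. The ordinal $\alpha$ is a root in $\overline{u}$ of a polynomial of degree at most $n$ over $\alpha$, so every $n$-closed field of the family contains it. With this inserted, your Steps~1--3 are correct.
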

	
	\begin{proof}
		From Proposition~\ref{inch-alg-e-ordinal} and Proposition~\ref{n-inch-e-ordinal}, it follows that $\overline{u}$, $\overline{u}^n$, for all $n$ are ordinals.
		It is obvious that $ u= \overline{u}^1 \subset \overline{u}^2 \subset \dots \subset \overline{u}^n \subset \dots \subset \overline{u}  $, so $\sup\{ \overline{u}^n \mid n \in \mathbb{N}^*  \} \subset \overline{u}$. 
		
		We prove that $F =\sup\{ \overline{u}^n \mid n \in \mathbb{N}^*  \}$ is an algebraically closed field. It is obvious that it is a field. Let $f \in F[X] $. There is an integer $m \in \omega$ such that $m > \deg(f) $ and all coefficients of $f$ are in $\overline{u}^m$. Then all roots of $f$ are in $\overline{u}^m$, so in $F$. It follows that $F = \overline{u}$.
	\end{proof}
	
	Therefore, in order to find the next transcendental ordinal after $\tau$, we need to find $ \overline{\tau(\tau)}^n$, for $n \in \mathbb{N}^*$. In Subsection~\ref{sec: 3.4}, we find $ \overline{\tau(\tau)}^2$. Before making the first steps into the search of $ \overline{\tau(\tau)}^2$, we give an example of another transcendental ordinal.

	\begin{definition} \label{def-s-x}
		Let $x$ be an ordinal. We define $s(x)$ to be the minimal field ordinal that is strictly greater than $x$.
	\end{definition}
	
	The above is well-defined because from Propositions~\ref{urm-corp-1} and~\ref{urm-corp-2} and from the fact that the supremum of a set of field ordinals is a field ordinal, it follows that there exist arbitrarily large field ordinals.
	
	\begin{lemma} \label{lema-s-x}
		Let $x$ be an ordinal. Then $x < s(x) \leq x^x$.
	\end{lemma}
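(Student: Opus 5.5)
Since $s(x)$ is by definition a field ordinal strictly greater than $x$, the inequality $x < s(x)$ is immediate. All the work is in the upper bound $s(x) \leq x^x$. The idea is to reduce to the largest field ordinal not exceeding $x$ and then apply Propositions~\ref{urm-corp-1}, \ref{urm-corp-2} and \ref{urm-corp-dupa-inch-alg}.

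First, a restriction on $x$. For $x \in \{0,1\}$ we have $x^x = 1$ while $s(x) = 2$, so the upper bound fails there. I will therefore prove the bound for $x \geq 2$ and note that this is the intended range.

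For $x \geq 2$, let $\alpha$ be the supremum of all field ordinals $\leq x$. The set is nonempty because $2$ is a field ordinal, and a supremum of field ordinals is a field ordinal, so $2 \leq \alpha \leq x$. By maximality of $\alpha$, no field ordinal lies in $(\alpha, x]$. Hence $s(x)$ is the next field ordinal after $\alpha$.

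Next I bound that next field ordinal by $\alpha^\alpha$, in three cases.
\begin{enumerate}[(i)]
\item If $\alpha$ is transcendental, Proposition~\ref{urm-corp-dupa-inch-alg} gives the next field ordinal as exactly $\alpha^\alpha$.
\item If $\alpha$ is infinite but not transcendental, Proposition~\ref{urm-corp-2} gives the next field ordinal as $\alpha^n$ for a finite $n$. Then $\alpha^n < \alpha^\omega \leq \alpha^\alpha$.
\item If $\alpha$ is finite, then $\alpha$ is a finite field of characteristic $2$. Degree-one polynomials always have roots, so the lexicographically least polynomial without a root has degree $n \geq 2$. Some polynomial $x^{\boxedexp{2}} \oplus x \oplus a$ has no root, by the usual trace argument: the additive map $y \mapsto y^{\boxedexp{2}} \oplus y$ has kernel $\{0,1\}$, so it is not surjective. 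Provided the lexicographic order compares degree first, this forces $n = 2$. Since $\alpha \geq 2$, we get $\alpha^2 \leq \alpha^\alpha$.
\end{enumerate}

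Finally I use monotonicity of ordinal exponentiation. From $2 \leq \alpha \leq x$ we get $\alpha^\alpha \leq x^\alpha \leq x^x$: the first step is monotonicity in the base, the second monotonicity in the exponent, which needs $x \geq 1$. Combining with the case analysis gives $s(x) \leq \alpha^\alpha \leq x^x$.

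The main obstacle is case (iii). It depends on the precise meaning of "lexicographically least polynomial" in Theorem~\ref{set} g), namely that degree is compared first. If that convention is unclear, I would instead use the classical fact, contained in Theorem~\ref{primul-transcendent}'s proof, that the field ordinals below $\omega$ are exactly $2^{2^k}$. The next field after $2^{2^k}$ is then $2^{2^{k+1}} = (2^{2^k})^2 \leq \alpha^\alpha$, which gives the same bound.
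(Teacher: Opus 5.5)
Your proof is correct and follows the paper's argument: take the largest field ordinal $\beta \le x$, observe $s(x) = s(\beta) \le \beta^\beta$ via Propositions~\ref{urm-corp-dupa-inch-alg} and~\ref{urm-corp-2}, and finish by monotonicity of exponentiation. Your explicit finite/infinite case split spells out a step the paper leaves implicit, and your restriction to $x \geq 2$ is a genuine correction: for $x \in \{0,1\}$ one has $s(x) = 2 > 1 = x^x$, because the paper's proof tacitly assumes some field ordinal $\le x$ exists (this is harmless for its later use in Proposition~\ref{omega-1-transc}).
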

	
	\begin{proof}
		Let $\beta = \sup \{ \alpha \leq x \mid \alpha \text{ field} \}$. Since the supremum of a set of field ordinals is a field ordinal, it follows that $ \beta \in \{ \alpha \leq x \mid \alpha \text{ field} \} $. Then $s(x) = s(\beta)$, because in the set $\{ \alpha \mid \beta < \alpha \leq x  \} $ there are no field ordinals. 
		From Propositions~\ref{urm-corp-dupa-inch-alg} and~\ref{urm-corp-2} it follows that $s(\beta) \leq \beta^\beta $, so $\beta \leq x < s(x) = s(\beta) \leq \beta^\beta \leq x^x$.
	\end{proof}
	
	\begin{proposition} \label{omega-1-transc}
		The ordinal $\omega_1$ is transcendental.
	\end{proposition}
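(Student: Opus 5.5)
The plan is to show that $\omega_1$ is a field ordinal and then that every polynomial over $\omega_1$ has a root in $\omega_1$. By Theorem~\ref{set} h)--i), this is exactly what it means for $\omega_1$ to be transcendental.

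\emph{Step 1: $\omega_1$ is a supremum of countable field ordinals.} For a countable ordinal $x$, Lemma~\ref{lema-s-x} gives $x < s(x) \leq x^x$. Since $x^x$ is countable whenever $x$ is, $s(x)$ is a countable field ordinal strictly above $x$. Hence the countable field ordinals are unbounded in $\omega_1$. Their supremum is therefore $\omega_1$, and since a supremum of field ordinals is a field ordinal, $\omega_1$ is a field ordinal.

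\emph{Step 2: algebraic closedness.} Let $f \in \omega_1[X]$ be nonzero. The polynomial $f$ has finitely many coefficients, each a countable ordinal. Let $\beta$ be their maximum and set $u = s(\beta)$. Then $u$ is a countable field ordinal, because $s(\beta) \leq \beta^\beta < \omega_1$, and $f \in u[X]$.

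By Proposition~\ref{inch-alg-e-ordinal}, the algebraic closure $\overline{u}$ of $u$ in $\ordinali$ is an ordinal and is an algebraic closure of $u$. By Lemma~\ref{corp-num}, $\overline{u}$ is countable, so as an ordinal it satisfies $\overline{u} < \omega_1$. Since $\overline{u}$ is algebraically closed, $f$ has a root in $\overline{u} \subset \omega_1$. Thus $\omega_1$ is algebraically closed, i.e.\ transcendental.

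\emph{Main obstacle.} The only delicate point is the cardinality argument: we must know that the subfield $\overline{u}$, as a set of ordinals, is itself an ordinal. This is exactly why Proposition~\ref{inch-alg-e-ordinal} is needed. Once $\overline{u}$ is an ordinal of countable cardinality, it is automatically $<\omega_1$. The rest is routine bookkeeping with countable ordinal exponentiation.
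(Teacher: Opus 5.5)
Your proof is correct and follows the paper's argument. Step 2 matches the paper exactly: you bound the coefficients by $\beta$, pass to $s(\beta)\le\beta^\beta<\omega_1$, and take its algebraic closure, which is an ordinal by Proposition~\ref{inch-alg-e-ordinal} and countable by Lemma~\ref{corp-num}. The one difference is cosmetic: in Step 1 you get that $\omega_1$ is a field as the supremum of the unbounded family of countable field ordinals, whereas the paper checks closure directly by placing $x\oplus y$, $x\otimes y$, $1\oslash y$ inside $s(y)<\omega_1$; both rest on Lemma~\ref{lema-s-x}. Also, in Step 2 ``nonzero'' should read ``nonconstant''.
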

	
	\begin{proof}
		We prove that $\omega_1$ is a field.
		Let $x \leq y < \omega_1$. Then $y < s(y) \leq y^y < \omega_1$, from Lemma~\ref{lema-s-x}. Since $s(y)$ is a field, it follows that $ x \oplus y, \ x \otimes y, \ 1 \oslash y < s(y) < \omega_1 $, so $\omega_1$ is a field.
		
		Let $p \in \omega_1[X]$. We denote by $a_0, a_1, \dots, a_n$ the coefficients of $p$ and $a = \max \{ a_0, \dots, a_n \}$. From Lemma~\ref{lema-s-x}, we have that $a< s(a) \leq a^a$. It follows that $s(a)< \omega_1$. From Proposition~\ref{inch-alg-e-ordinal} it follows that the algebraic closure of $s(a)$ is an ordinal, which we denote by $\alpha$.
		From Lemma~\ref{corp-num} it follows that $\alpha < \omega_1$. Since $\alpha $ is algebraically closed and $p \in \alpha[X]$, it follows that the roots of $p$ are in $\alpha$, and thus also in $\omega_1$.
	\end{proof}
	
	The following proposition states that there are uncountably many transcendental ordinals between $\tau$ and $\omega_1$.
	
	\begin{proposition} \label{multi-transc}
		We have that $ \otype \{ \alpha < \omega_1 \mid \alpha \text{ transcendental ordinal}  \} = \omega_1 $.
	\end{proposition}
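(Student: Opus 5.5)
We show that the set $T=\{\alpha<\omega_1 \mid \alpha \text{ transcendental}\}$ is unbounded in $\omega_1$ and closed under suprema of countable subsets. An unbounded subset of $\omega_1$ already has order type $\omega_1$, since $\otype(T)\le\omega_1$ and a subset of order type some countable $\gamma<\omega_1$ would be bounded by regularity of $\omega_1$. So unboundedness is the real content.

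\textbf{Unboundedness.} Fix $x<\omega_1$.
\begin{enumerate}[1)]
	\item By Lemma~\ref{lema-s-x}, $x<s(x)\le x^x<\omega_1$, and $s(x)$ is a field ordinal.
	\item By Proposition~\ref{inch-alg-e-ordinal}, the algebraic closure $\alpha:=\overline{s(x)}$ of $s(x)$ in $\ordinali$ is an ordinal. It is an algebraically closed field ordinal, i.e.\ transcendental.
	\item Since $s(x)$ is countable, Lemma~\ref{corp-num} shows that $\alpha$ is countable, so $\alpha<\omega_1$. (This is exactly the argument in the proof of Proposition~\ref{omega-1-transc}.)
	\item Hence $x<s(x)\le\alpha<\omega_1$ with $\alpha\in T$.
\end{enumerate}
As $x$ was arbitrary, $T$ is unbounded in $\omega_1$.

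\textbf{Order type.} The main step is to conclude that $\otype(T)=\omega_1$. Since $T\subseteq\omega_1$, we have $\otype(T)\le\omega_1$. Suppose $\otype(T)=\gamma<\omega_1$. Then $T$ is countable, so $\sup T<\omega_1$, because a countable union of countable ordinals is countable. This contradicts unboundedness.

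Alternatively, we can avoid the cardinality argument by building the sequence explicitly. Set $t_0=\tau$, $t_{\beta+1}=\overline{s(t_\beta)}$, and $t_\lambda=\sup_{\beta<\lambda}t_\beta$ at limits. At limit stages the supremum of an increasing chain of algebraically closed fields is again an algebraically closed field, so $t_\lambda$ is transcendental. The countability of $t_\lambda$ for countable $\lambda$ follows by induction. The map $\beta\mapsto t_\beta$ is then a strictly increasing injection of $\omega_1$ into $T$, giving $\otype(T)\ge\omega_1$.

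I do not expect a real obstacle. The only points needing care are checking that $\overline{s(x)}$ lies strictly above $x$ and stays countable, and using the regularity of $\omega_1$ (or the explicit transfinite construction) to pass from unboundedness to order type $\omega_1$.
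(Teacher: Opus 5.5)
Your proof is correct and is essentially the paper's argument. The paper supposes $\beta=\sup T<\omega_1$ and takes the algebraic closure of the field $\beta^\beta$, whereas you take $\overline{s(x)}$ for arbitrary $x<\omega_1$ (which spares you checking that $\beta^\beta$ is a field); both then conclude via Proposition~\ref{inch-alg-e-ordinal}, Lemma~\ref{corp-num} and the fact that countably many countable ordinals have countable supremum. Note only that your alternative transfinite construction does not actually avoid this last fact, since it is exactly what makes $t_\lambda$ countable at limit stages.
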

	
	\begin{proof}
		Let $\gamma = \otype \{ \alpha < \omega_1 \mid \alpha \text{ transcendental ordinal}  \} \leq \omega_1 $. 
		
		Let $\beta = \sup \{ \alpha < \omega_1 \mid \alpha \text{ transcendental ordinal}  \} $. If $\beta < \omega_1$, we have that $\beta^\beta < \omega_1$. From Proposition~\ref{inch-alg-e-ordinal} and Lemma~\ref{corp-num} it follows that the algebraic closure of $\beta^\beta$ is an ordinal smaller than $\omega_1$, a contradiction with the choice of $\beta$. So $\beta = \omega_1$. If $\gamma < \omega_1$, since $\omega_1 = \sup \{ \alpha < \omega_1 \mid \alpha \text{ transcendental ordinal}  \}$ and a countable union of countable sets is countable, we arrive at a contradiction. Therefore $\gamma = \omega_1$.
	\end{proof}
	
	\subsection{The perfect closure of the next field after a transcendental}
	
	Before finding the quadratic closure of $\tau(\tau)$, it is useful to know its perfect closure. It turns out that finding its perfect closure is much easier and we can also find some identities, just like in Proposition~\ref{urm-corp-1}. 
	
	\begin{proposition} \label{inch-perfecta-ord-corp-alg-inchis}
		Let $\alpha $ be an algebraically closed field ordinal. Then the perfect closure of $\alpha^\alpha$ is $\alpha^{\alpha \times \omega}$. Moreover, we have that $(\alpha^\alpha)\boxedexp{2} = \alpha$, $\alpha^{\alpha \times 2^i}$ is a field, $\alpha^{\alpha \times 2^i} = \alpha ^{\alpha \times 2^{i-1}}(\alpha ^{\alpha \times 2^{i-1}})$, and $ (\alpha^{\alpha \times 2^i})\boxedexp{2} = \alpha^{\alpha \times 2^{i-1}}$, for every $i \in \mathbb{N}^*$.
	\end{proposition}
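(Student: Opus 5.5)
We argue by induction on $i$, reusing the simplest extension theorem (Theorem~\ref{set}) in the same way as in Propositions~\ref{urm-corp-1} and~\ref{urm-corp-2}. Put $\gamma_0 := \alpha^\alpha$ and $\gamma_i := \alpha^{\alpha\times 2^i}$. By Proposition~\ref{urm-corp-dupa-inch-alg}, $\gamma_0=\alpha(\alpha)$ is a field. It is not perfect, because $\alpha$ is transcendental over the algebraically closed field $\alpha$: if $\alpha = y^{\boxedexp{2}}$ with $y = P(\alpha)\oslash Q(\alpha)$, then $\alpha\otimes Q(\alpha)^{\boxedexp{2}} = P(\alpha)^{\boxedexp{2}}$. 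Comparing degrees parity gives a contradiction, since the left side has odd degree in $\alpha$ and the right side even degree. Hence $\gamma_0$ is not $2$-closed.

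The first key step is to show that the lexicographically least polynomial over $\gamma_0$ with no root in $\gamma_0$ is $x^{\boxedexp{2}}\oplus\alpha$. For this we check that every polynomial of degree $1$ has a root, and that every monic $x^{\boxedexp{2}}\oplus c$ with $c<\alpha$ has a root. The latter holds because such $c$ lies in the algebraically closed field $\alpha$, hence is a square there. Since $\alpha$ is the least element of $\gamma_0$ that is not a square, and any monic quadratic lexicographically smaller than $x^{\boxedexp{2}}\oplus\alpha$ has the form $x^{\boxedexp{2}}\oplus 0\otimes x\oplus c$ with $c<\alpha$, the claim follows. (One should double-check the lexicographic convention of Theorem~\ref{set} g), namely which coefficient is compared first. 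The polynomial $x^{\boxedexp{2}}\oplus x \oplus c$ type with zero middle coefficient should come first.)

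Theorem~\ref{set} g) then gives $\gamma_0^{\boxedexp{2}}=\alpha$. Proposition~\ref{urm-corp-2} gives that the next field is $\gamma_0^2=\alpha^{\alpha\times 2}=\gamma_0(\gamma_0)$, which equals $\gamma_0(\sqrt\alpha)$.

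The induction step repeats this argument. Assume $\gamma_{i}=\gamma_{i-1}(\gamma_{i-1})$ with $\gamma_i^{\boxedexp{2}}=\gamma_{i-1}$, so that $\gamma_i=\alpha(\alpha^{1/2^i})$, where $\gamma_i$ plays the role of $\alpha^{1/2^i}$. Then $\gamma_i\cong\alpha(t)$ is again a rational function field over $\alpha$ in the transcendental $t=\gamma_i$. The same parity argument shows $t$ is not a square. Every element smaller than $\gamma_i$ lies in $\gamma_{i-1}$, and is a square in $\gamma_i$, because Frobenius maps $\gamma_i=\gamma_{i-1}(\sqrt{\gamma_{i-1}})$ onto $\gamma_{i-1}$; more concretely, $\gamma_{i-1}^{\boxedexp{1/2}}=\alpha(\alpha^{1/2^{i}})=\gamma_i$.

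The main obstacle is this lexicographic-minimality step. We need that no quadratic or linear polynomial lexicographically below $x^{\boxedexp{2}}\oplus\gamma_i$ lacks a root in $\gamma_i$. Quadratics $x^{\boxedexp{2}}\oplus b\otimes x\oplus c$ with $b\neq0$ are separable, and might precede $x^{\boxedexp{2}}\oplus\gamma_i$ if the middle coefficient is compared before the constant. I would check Siegel's convention: the comparison goes from the highest-degree coefficient downward, so $b=0<b'$ puts $x^{\boxedexp{2}}\oplus c$ first, and the constant term $\gamma_i$ is minimal among nonsquares. With that, Theorem~\ref{set} g) and Proposition~\ref{urm-corp-2} give $\gamma_{i+1}=\gamma_i^2=\alpha^{\alpha\times2^{i+1}}$, which is a field with $\gamma_{i+1}^{\boxedexp{2}}=\gamma_i$.

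Finally, $\alpha^{\alpha\times\omega}=\sup_i\gamma_i=\bigcup_i\gamma_i$. This union is perfect, since every element of $\gamma_i$ has its square root in $\gamma_{i+1}$. Each $\gamma_i$ is contained in any perfect field containing $\gamma_0$, by iterated square roots of $\alpha$ and uniqueness of square roots in characteristic $2$. Therefore $\alpha^{\alpha\times\omega}$ is the perfect closure of $\alpha^\alpha$; this can also be seen via the description of the perfect closure as $\bigcup K_n$ in Proposition~\ref{corolar-pur-insep}.
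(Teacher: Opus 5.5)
Your argument is correct and follows the paper's proof closely. You use the degree-parity argument to show $\alpha$ is not a square in $\alpha(\alpha)$. You apply Theorem~\ref{set}~g) together with Proposition~\ref{urm-corp-2} at each stage, with the same top-down lexicographic convention the paper uses implicitly. You get square roots of all elements below the new generator from the Frobenius image of the previous field, and you take the perfect closure to be the union $\bigcup_i \alpha^{\alpha\times 2^i}$. The only real variation is in the inductive step: you reuse the parity argument in $\alpha(t)$ to see that the generator is a non-square, where the paper writes $\eta=\eta_1\oplus\eta_2\otimes\alpha^{\alpha\times 2^{i-1}}$ and observes that $\eta\boxedexp{2}$ lies in $\alpha^{\alpha\times 2^{i-1}}$.

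You should fix the indexing in your inductive paragraph, where field and element indices are shifted by one. The field $\alpha^{\alpha\times 2^i}$ equals $\alpha(t)$ with $t=\alpha^{\alpha\times 2^{i-1}}$, not $t=\alpha^{\alpha\times 2^{i}}$, which is not even an element of that field. So the polynomial to test over $\alpha^{\alpha\times 2^i}$ is $X\boxedexp{2}\oplus\alpha^{\alpha\times 2^{i-1}}$. Also, your induction hypothesis $\gamma_i\boxedexp{2}=\gamma_{i-1}$ at $i=1$ is only available if you start the induction from $\gamma_0\boxedexp{2}=\alpha$, reading $\gamma_{-1}$ as $\alpha$.
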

	
	\begin{proof}
		
		From Proposition~\ref{urm-corp-dupa-inch-alg}, we know that $\alpha^{\alpha} = \alpha(\alpha) $ is the next field. Since $\alpha$ is algebraically closed, it follows that for every $\beta < \alpha$, the polynomial $X\boxedexp{2} \oplus \beta$ has roots in $\alpha$, and thus also in $\alpha^\alpha$.
		
		If $X\boxedexp{2} - \alpha$ were to have the root $\eta$ in $\alpha^\alpha$, then $\eta = P(\alpha) \oslash Q(\alpha) $, where $P, \ Q \in \alpha[X]$, $Q \neq 0$. We have that:
		
		$$ (P(\alpha) \oslash Q(\alpha))\boxedexp{2} = \alpha. $$
		
		Then, $ P(\alpha)\boxedexp{2} = \alpha \otimes Q(\alpha)\boxedexp{2} $, but since $\alpha$ is transcendental over $\alpha$, it follows that $P(X)^2 = X Q(X)^2$, but the left-hand side has an even degree, while the right-hand side has an odd degree. Thus, $X\boxedexp{2} \oplus \alpha$ has no roots in $\alpha^\alpha$. 
		
		From Theorem~\ref{set} g) it follows that $(\alpha^\alpha)\boxedexp{2} = \alpha$, and from Theorem~\ref{urm-corp-2} it follows that $\alpha^{\alpha \times 2}  =  \alpha^\alpha(\alpha^\alpha)$ is the next field after $\alpha^\alpha$.

		Next, we prove by induction on $i \in \mathbb{N}^*$ that:
		\begin{enumerate} [a)]
			\item The ordinal $\alpha^{\alpha \times 2^i}$ is a field and $\alpha^{\alpha \times 2^i} = \alpha ^{\alpha \times 2^{i-1}}(\alpha ^{\alpha \times 2^{i-1}})$;
			\item For every $\beta < \alpha^{\alpha \times 2^{i-1}}$, the polynomial $X\boxedexp{2} \oplus \beta $ has roots in $ \alpha^{\alpha \times 2^i} $;
			\item $ (\alpha^{\alpha \times 2^i})\boxedexp{2} = \alpha^{\alpha \times 2^{i-1}}$.
		\end{enumerate}
		
		For $i=1$, we proved that $\alpha^{\alpha \times 2} = \alpha^\alpha(\alpha^\alpha)$ is a field. Let $\beta < \alpha^\alpha$. Then $\beta = P(\alpha) \oslash Q(\alpha) $, where $P, \ Q \in \alpha[X]$, $Q \neq 0$. Since $P(X) = a_0 \oplus \dots \oplus a_n\otimes X\boxedexp{n}$, where $a_0, \dots , a_n \in \alpha$, and $\alpha$ is algebraically closed, it follows that there exist $b_0, \dots , b_n \in \alpha $ such that $a_0 = b_0\boxedexp{2}, \dots , \ a_n = b_n\boxedexp{2}$, so $P(\alpha)= b_0\boxedexp{2} \oplus \dots \oplus b_n\boxedexp{2}\otimes (\alpha^\alpha)\boxedexp{2n} = (b_0 \oplus \dots \oplus b_n\otimes (\alpha^\alpha)\boxedexp{n})\boxedexp{2} = c\boxedexp{2}$, where $ c \in \alpha^{\alpha \times 2 }$. Analogously, $Q(\alpha) = d\boxedexp{2}$, where $ d \in \alpha^{\alpha \times 2 }$. So $X\boxedexp{2} \oplus \beta $ has the root $c \oslash d$ in $ \alpha^{\alpha \times 2} $. 
		
		Assume that there exists $\eta \in \alpha^{\alpha \times 2}$ such that $ \eta \boxedexp{2} = \alpha^\alpha $. Then there exist $\eta_1, \eta_2 \in \alpha^\alpha$ such that $\eta = \eta_1 \oplus \eta_2 \otimes \alpha^\alpha$. We have that:
		
		$$ \alpha^\alpha = \eta\boxedexp{2} = \eta_1\boxedexp{2} \oplus \eta_2\boxedexp{2}\otimes \alpha \in  \alpha^\alpha \text{, contradiction.}$$
		
		By Theorem~\ref{set} g), it follows that $ (\alpha^{\alpha \times 2})\boxedexp{2} = \alpha^\alpha $.
		
		To prove the induction step, we assume it is true for $i-1$ and prove it for $i \geq 2$. From the induction hypothesis, we know that $\alpha ^{\alpha \times 2^{i-1}}$ is a field in which the lexicographically least irreducible polynomial is $X\boxedexp{2} \oplus \alpha^{\alpha \times 2^{i-2}} $. We have that $\alpha^{\alpha \times 2^i} = \alpha ^{\alpha \times 2^{i-1}}(\alpha ^{\alpha \times 2^{i-1}})$ is the next field after $\alpha ^{\alpha \times 2^{i-1}}$. 
		
		Let $\beta < \alpha ^{\alpha \times 2^{i-1}}$. Then there exist $\beta_1, \ \beta_2 \in \alpha ^{\alpha \times 2^{i-2}}$ such that $\beta = \beta_1 \oplus \beta_2 \otimes \alpha ^{\alpha \times 2^{i-2}}$. 
		From the induction hypothesis, there exist $\beta'_1, \ \beta'_2 \in \alpha ^{\alpha \times 2^{i-1}}$
		such that $\beta_1 = {\beta'_1}\boxedexp{2}, \ \beta_2 = {\beta'_2}\boxedexp{2}$. Then $\beta = (\beta'_1 \oplus \beta'_2 \otimes \alpha ^{\alpha \times 2^{i-1}})\boxedexp{2}$. Since $ \beta'_1 \oplus \beta'_2 \otimes \alpha ^{\alpha \times 2^{i-1}} \in \alpha^{\alpha \times 2^i}$, it follows that the polynomial $X\boxedexp{2} \oplus \beta $ has roots in $ \alpha^{\alpha \times 2^i} $.
		
		Assume there exists $\eta \in \alpha^{\alpha \times 2^i} $ such that $ \eta \boxedexp{2} = \alpha^{\alpha \times 2^{i-1}} $. Then there exist $\eta_1, \ \eta_2 \in \alpha ^{\alpha \times 2^{i-1}}$ such that $\eta = \eta_1 \oplus \eta_2 \otimes \alpha ^{\alpha \times 2^{i-1}}$. We have that:
		
		$$ \alpha^{\alpha \times 2^{i-1}} = \eta \boxedexp{2} = {\eta_1}\boxedexp{2} \oplus {\eta_2}\boxedexp{2} \otimes \alpha ^{\alpha \times 2^{i-2}} \in \alpha^{\alpha \times 2^{i-1}} \text{, contradiction.}$$
		
		By Theorem~\ref{set} g), it follows that $ (\alpha^{\alpha \times 2^i})\boxedexp{2} = \alpha^{\alpha \times 2^{i-1}}$.
		
		We have obtained from induction that for $ i \in \mathbb{N}^* $, the $i$-th field after $\alpha^\alpha$ is $\alpha^{\alpha \times 2^{i}}$. Then $\sup\{\alpha^{\alpha \times 2^{i}} \mid i \in \mathbb{N}^* \} = \alpha^{\alpha \times \omega}$ is a field. Let $\xi \in \alpha^{\alpha \times \omega} $. Then there exists $ i \in \mathbb{N}^* $ such that $ \xi \in \alpha^{\alpha \times 2^{i}} $. From ii) of the induction, it follows that $X\boxedexp{2} - \xi$ has a root in $ \alpha^{\alpha \times 2^{i+1}} \subset \alpha^{\alpha \times \omega}  $. So $\alpha^{\alpha \times \omega}$ is a perfect field. 
		
		Let $F \subset \ordinali$ be a perfect field such that $\alpha^\alpha \subset F$. Then, since every element of $F$ has its square root in $F$, it follows that $\alpha^\alpha \in F$, because $ \alpha \in F$, so $ \alpha^\alpha(\alpha^\alpha ) \subset F$. Then $ \alpha^{\alpha \times 2 } \subset F$ and $ \alpha^{\alpha \times 2 } \in F$. Inductively it follows that for any $n \in \mathbb{N}$, $ \alpha^{\alpha \times 2^n } \subset F$ and $ \alpha^{\alpha \times 2^n } \in F$. So $\alpha ^ {\alpha \times \omega } \subset F$. Thus, $\alpha ^ {\alpha \times \omega } $ is the perfect closure of $\alpha^\alpha$.

	\end{proof}
	
	\begin{remark} \label{obs-perfect-pana-la-transcendent2}
		From Proposition~\ref{inch-perfecta-ord-corp-alg-inchis}, it follows that the perfect closure of $\omega^{\omega^{\omega^{\omega}}}$ is $ \tau^{\tau \times \omega} = \omega^{\omega^{\omega^{\omega}+1}}$. From Proposition~\ref{extindere-alg-corp-perfect}, it follows that any algebraic extension of $\omega^{\omega^{\omega^{\omega}+1}}$ is perfect, so any field ordinal greater than or equal to $\omega^{\omega^{\omega^{\omega}+1}}$ and smaller than the second transcendental ordinal is perfect.
	\end{remark}

	\begin{proposition} \label{rel-inch-perf}
		Let $t$ be a transcendental ordinal. We have that $ (t^{t \times \omega}) \boxedexp{2} \oplus t^{t \times \omega} \oplus t =0 $. In particular, $t^{t \times \omega}$ is not quadratically closed.
	\end{proposition}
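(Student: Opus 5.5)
The plan is to obtain the identity from Theorem~\ref{set} g). Set $F=t^{t\times\omega}$. By Proposition~\ref{inch-perfecta-ord-corp-alg-inchis}, $F$ is a field (the perfect closure of $t^t$), and it is not algebraically closed, since it is algebraic over $t^t$. I would show that the lexicographically least polynomial over $F$ with no root in $F$ is $X\boxedexp{2}\oplus X\oplus t$. Theorem~\ref{set} g) then gives $F\boxedexp{2}\oplus F\oplus t=0$. The ``in particular'' statement follows at once, because a quadratic polynomial over $F$ has no root in $F$.

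\textbf{Step 1 (smaller polynomials have roots).} I use the ordering of Theorem~\ref{set} g), in which lower degree comes first and, within a degree, coefficients are compared starting from the top. Every polynomial of degree $1$ has a root. Among monic quadratics $X\boxedexp{2}\oplus (a\otimes X)\oplus b$, the first ones are those with $a=0$. Each $X\boxedexp{2}\oplus b$ has a root because $F$ is perfect (Proposition~\ref{inch-perfecta-ord-corp-alg-inchis}). Next come $a=1$ and $b<t$. Here $X\boxedexp{2}\oplus X\oplus b$ has a root in $t\subset F$, because $t$ is algebraically closed. So the first candidate that might fail is $X\boxedexp{2}\oplus X\oplus t$.

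\textbf{Step 2 (no root in $F$).} This is the main step. Suppose $\eta\in F$ with $\eta\boxedexp{2}\oplus\eta=t$. Then $\eta$ lies in some $K_i=t^{t\times 2^i}$. Write $s_1=t^t$ and $s_j=t^{t\times 2^{j-1}}$. Proposition~\ref{inch-perfecta-ord-corp-alg-inchis} gives $K_i=t(s_i)$ with $s_i\boxedexp{2}=s_{i-1}$ and $s_1\boxedexp{2}=t$, so $s:=s_i$ satisfies $s\boxedexp{N}=t$ with $N=2^i$. Since $t$ is transcendental over $t$ and $t\in t(s)$ is algebraic over $t(s)$, the element $s$ is transcendental over $t$. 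Hence $K_i$ is a rational function field over the algebraically closed field $t$.

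I then descend in the exponent. If $\eta\boxedexp{2}\oplus\eta=s\boxedexp{M}$ with $M$ even, then $\eta'=\eta\oplus s\boxedexp{M/2}$ satisfies $\eta'\boxedexp{2}\oplus\eta'=s\boxedexp{M/2}$, because we are in characteristic 2. Iterating from $M=N$ yields $\zeta\in t(s)$ with $\zeta\boxedexp{2}\oplus\zeta=s$.

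Now write $\zeta=P(s)\oslash Q(s)$ with $P,Q\in t[X]$ coprime. This gives $P^2+PQ=XQ^2$ as polynomials, using that $s$ is transcendental over $t$. From this, $Q\mid P^2$, so $Q$ is a nonzero constant $c$. Then $P^2+cP=c^2X$. The degree $2\deg P$ of the left side, or the value $0$ when $P$ is constant, can never equal $1$, which is a contradiction.

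I expect the only delicate point to be Step 2. It requires checking carefully that $K_i$ really is the rational function field $t(s_i)$ with $s_i$ transcendental, and that the lexicographic convention in Theorem~\ref{set} g) puts all $X\boxedexp{2}\oplus b$ and all $X\boxedexp{2}\oplus X\oplus b$ with $b<t$ before $X\boxedexp{2}\oplus X\oplus t$. The descent trick is what avoids the degree coincidence that would otherwise appear when $N$ is even.
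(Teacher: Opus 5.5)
Your proof is correct. Step~1 is the same reduction the paper uses, and the difference lies in Step~2.

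The paper never identifies $t^{t\times 2^i}$ as a rational function field in a new variable. Instead it inducts on $i$. A root $x\in t^{t\times 2^{i+1}}$ is written as $t^{t\times 2^i}\otimes\alpha\oplus\beta$ with $\alpha,\beta<t^{t\times 2^i}$. Since $(t^{t\times 2^i})\boxedexp{2}<t^{t\times 2^i}$, every term of $x\boxedexp{2}\oplus x\oplus t$ other than $t^{t\times 2^i}\otimes\alpha$ lies below $t^{t\times 2^i}$, which forces $\alpha=0$. In effect, $x=x\boxedexp{2}\oplus t$ and squaring maps $t^{t\times 2^{i+1}}$ into $t^{t\times 2^i}$, so a root can never first appear at a higher level. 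Everything is pushed down to $t(t)$, where the same degree computation as your final step finishes the proof.

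You instead keep the field fixed and move the constant. After identifying $t^{t\times 2^i}=t(s)$ with $s\boxedexp{2^i}=t$ and $s$ transcendental, the shift $\eta\mapsto\eta\oplus s\boxedexp{M/2}$ shows $t\oplus s\in P[t^{t\times 2^i}]$. One degree computation in $t(s)$ then suffices.

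What each route buys:
\begin{itemize}
\item The paper's route uses only the tower structure from Proposition~\ref{inch-perfecta-ord-corp-alg-inchis}. It is a concrete form of the purely-inseparable argument in Proposition~\ref{L-pur-insep}.
\item Your route needs the extra identification of each level as a rational function field. In exchange it gives a uniform computation at every level and shows exactly how the even-exponent degree coincidence is removed.
\end{itemize}

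One wording fix: $s$ is transcendental over $t$ simply because $s\boxedexp{N}=t$ and the element $t$ is transcendental over the field $t$. The clause about $t$ being algebraic over $t(s)$ is not the relevant reason.
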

	
	\begin{proof}
		Since $t^{t \times \omega}$ is a perfect field and $t $ is an algebraically closed field, any polynomial smaller (in the lexicographical sense) than $X \boxedexp{2} \oplus X \oplus t$ has roots in $t^{t \times \omega}$. By Theorem~\ref{set} g), it is enough to prove that $X \boxedexp{2} \oplus X \oplus t$ is irreducible over $ t^{t \times \omega} $. Being a polynomial of degree 2, it is enough to prove that there is no $x \in t^{t \times \omega} $ such that $x \boxedexp{2} \oplus x \oplus t = 0$.

		We prove by induction on $i$ that there is no $x \in t^{t \times 2^i} $ such that $x \boxedexp{2} \oplus x \oplus t = 0$.

		Assume there exists $x \in t(t) $ such that $x \boxedexp{2} \oplus x \oplus t = 0$. Then, $x = p(t) \oslash q(t)$, where $p, q \in t[X]$, $q$ monic, $(p,q)=1$. So $p \boxedexp{2} \oplus p \otimes q \oplus X \otimes q \boxedexp{2} = 0$, from which it follows that $ q \mid p \boxedexp{2}$, so $q=1$. It follows that $ p \boxedexp{2} \oplus p \oplus X =0 $. If $p$ is constant, then $\deg(p \boxedexp{2} \oplus p \oplus X ) =1 $, contradiction. If $\deg (p) \geq 1$, then $\deg(p \boxedexp{2} \oplus p \oplus X ) \geq 2$, contradiction. Thus, there is no $x \in t(t) $ such that $x \boxedexp{2} \oplus x \oplus t = 0$. 
		
		Assume there exists $x \in t^{t \times 2^{i+1}}  $ such that $x \boxedexp{2} \oplus x \oplus t = 0$. Then $x = t ^ {t \times 2^i} \otimes \alpha \oplus \beta$, where $\alpha, \beta < t^{t \times 2^i}$. So $ t^{t \times 2^i} \otimes \alpha \oplus (\beta \oplus \beta \boxedexp{2} \oplus (t^{t \times 2^i})\boxedexp{2} \otimes \alpha \boxedexp{2} \oplus t  ) = 0 $. Since $(t^{t \times 2^i}) \boxedexp{2} < t^{t \times 2^i} $, it follows that $ \beta \oplus \beta \boxedexp{2} \oplus (t^{t \times 2^i})\boxedexp{2} \otimes \alpha \boxedexp{2} \oplus t     < t^{t \times 2^i} $, so $\alpha = 0$, thus $x < t^{t \times 2^i}$, contradiction with the induction hypothesis.

	\end{proof}

	\begin{remark} 
		We have that $ (\tau^{\tau \times \omega}) \boxedexp{2} \oplus \tau^{\tau \times \omega} \oplus \tau =0 $.
	\end{remark}

	\subsection{The quadratic closure of a perfect field ordinal}
	
	The results in this subsection were published by H. Lenstra in \cite{nim-mult}.
	
	\begin{definition} \label{def-P}
		For an ordinal $x$, we denote $P(x) = x\boxedexp{2} \oplus x$. We denote by $x^*$ the smallest ordinal $y$ such that $P(y)=x$. 
	\end{definition}

	\begin{remark}
		In Definition~\ref{def-P} we know that $\{y \mid P(y)=x\}$ is non-empty, since there exist roots of the polynomial $P(X) \oplus x$ in $\ordinali$.
	\end{remark}
	
	\begin{remark}
		For any ordinal $x$, the roots of the polynomial $P(X) \oplus x$ are $x^*$ and $x^*\oplus 1$. We have that $x^* \oplus 1>x^*$. 
		
	\end{remark}
	
	\begin{proposition} \label{lipseste-1-la-star}
		Let $x$ be an ordinal. We write $x = 2^{\beta_1}+\dots + 2^{\beta_n}$, where $\beta_1 > \dots > \beta_n $. Then $\beta_n = 0$ if and only if $x \oplus 1 < x$.
	\end{proposition}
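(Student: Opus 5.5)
The plan is to use Theorem~\ref{thm-grup}~b) together with Theorem~\ref{set}~b). Write $x = 2^{\beta_1}+\dots+2^{\beta_n}$ with $\beta_1>\dots>\beta_n$. By Theorem~\ref{thm-grup}~b) this sum coincides with $2^{\beta_1}\oplus\dots\oplus 2^{\beta_n}$. Then $x\oplus 1$ is obtained by Nim-adding $1 = 2^0$, and the two directions follow by a case split on whether $\beta_n = 0$.

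\emph{Case $\beta_n = 0$.} Here $2^{\beta_n} = 1$ and $1\oplus 1 = 0$, so by associativity and commutativity of $\oplus$ we get $x\oplus 1 = 2^{\beta_1}\oplus\dots\oplus 2^{\beta_{n-1}}$. By Theorem~\ref{thm-grup}~b) again, this equals the ordinal sum $2^{\beta_1}+\dots+2^{\beta_{n-1}}$. This is strictly smaller than $2^{\beta_1}+\dots+2^{\beta_{n-1}}+1 = x$, since adding $1$ on the right strictly increases an ordinal. Hence $x\oplus 1<x$.

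\emph{Case $\beta_n>0$ (including $x=0$, where $n=0$).} Put $\gamma = 2^{\beta_n}$, which is a group ordinal by Theorem~\ref{thm-grup}~a). Write $x = \gamma\times\delta$, where $\delta = 2^{\beta_1'}+\dots+2^{\beta_{n-1}'}+1$ and each $\beta_i'$ is defined by $\beta_n+\beta_i'=\beta_i$; this works because $\gamma\times 2^{\beta'} = 2^{\beta_n+\beta'}$ and multiplication distributes on the left. Since $1<\gamma$, Theorem~\ref{set}~b) gives $x\oplus 1 = (\gamma\times\delta)\oplus 1 = (\gamma\times\delta)+1 = x+1 > x$. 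For $x=0$ this is simply $0\oplus 1 = 1>0$.

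Combining the two cases gives the equivalence, since exactly one of $x\oplus1<x$ and $x\oplus1>x$ holds ($x\oplus 1\neq x$). The only mildly delicate point is justifying the factorization $x=\gamma\times\delta$ in the second case, which rests on the existence of the left-subtraction ordinals $\beta_i'$. As an alternative that avoids the factorization, in the second case one can apply Theorem~\ref{set}~b) with group ordinal $2^{\beta_n}$ and $\beta$ chosen so that $2^{\beta_n}\times\beta = x$.
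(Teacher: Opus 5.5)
Your proof is correct and uses the same case split as the paper; the case $\beta_n=0$ is identical to the paper's argument. In the case $\beta_n>0$ the paper skips the factorization $x=\gamma\times\delta$ and Theorem~\ref{set}~b). It notes that appending $2^0$ keeps the exponents $\beta_1>\dots>\beta_n>0$ strictly decreasing, so Theorem~\ref{thm-grup}~b) applies once more and gives directly $x\oplus1=2^{\beta_1}\oplus\dots\oplus2^{\beta_n}\oplus2^0=2^{\beta_1}+\dots+2^{\beta_n}+2^0=x+1$. Your detour through left subtraction and distributivity is valid, but the step you flagged as delicate is not needed.
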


	\begin{proof}
		If $\beta_n = 0$, then $x \oplus 1 = 2^{\beta_1} \oplus \dots \oplus 2^{\beta_n} \oplus 1 = 2^{\beta_1} \oplus \dots \oplus 2^{\beta_{n-1}} = 2^{\beta_1}+\dots + 2^{\beta_{n-1}} < x$.
		
		If $\beta_n > 0$, then $x \oplus 1 = 2^{\beta_1} \oplus \dots \oplus 2^{\beta_n} \oplus 1 = 2^{\beta_1}+\dots + 2^{\beta_{n}} +2^0 = x+1 >x$.
	\end{proof}
	
	\begin{proposition}
		For any ordinals $x$, $y$, we have that $(x \oplus y)^*= x^* \oplus y^*$.
	\end{proposition}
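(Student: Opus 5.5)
The plan is to use two facts: $P$ is additive in characteristic $2$, and Proposition~\ref{lipseste-1-la-star} tells us which of the two roots of $P(X)\oplus x$ is the smaller one.

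\emph{Step 1: $P$ is additive.} In $\ordinali$, which has characteristic $2$, we have $(a\oplus b)\boxedexp{2}=a\boxedexp{2}\oplus b\boxedexp{2}$. Hence $P(a\oplus b)=P(a)\oplus P(b)$ for all ordinals $a,b$. Applying this to $a=x^*$ and $b=y^*$ gives $P(x^*\oplus y^*)=x\oplus y$. So $x^*\oplus y^*$ is a root of $P(X)\oplus(x\oplus y)$. By the preceding remark, this means $x^*\oplus y^*\in\{(x\oplus y)^*,\,(x\oplus y)^*\oplus 1\}$.

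\emph{Step 2: characterise the smaller root.} Write an ordinal $z$ by Theorem~\ref{thm-grup} b) as $z=2^{\beta_1}+\dots+2^{\beta_n}$ with $\beta_1>\dots>\beta_n$, and say that $z$ has the ``$2^0$ term'' if $\beta_n=0$. For any $x$ the two roots are $x^*$ and $x^*\oplus 1$. Adding $1$ with $\oplus$ toggles the $2^0$ term, so exactly one of the two roots has a $2^0$ term. By Proposition~\ref{lipseste-1-la-star}, a root $r$ with a $2^0$ term satisfies $r\oplus 1<r$, so it is the larger of the two. Therefore $x^*$ is characterised as the unique root of $P(X)\oplus x$ whose expansion has no $2^0$ term. (The case $x^*=0$ is consistent with this, since $0$ has the empty expansion.)

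\emph{Step 3: conclude.} By Step 2, neither $x^*$ nor $y^*$ has a $2^0$ term. By Theorem~\ref{thm-grup} b), the Nim-sum of two finite sums of distinct powers of $2$ is computed by cancelling the powers that occur in both and keeping the others. So a power $2^0$ can only appear in $x^*\oplus y^*$ if it appears in exactly one summand, which it does not. Hence $x^*\oplus y^*$ has no $2^0$ term. By Step 1 it is a root of $P(X)\oplus(x\oplus y)$, so by the characterisation in Step 2 it equals $(x\oplus y)^*$.

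The main point needing care is Step 2, namely justifying that $\oplus 1$ toggles exactly the $2^0$ term. This follows from Theorem~\ref{thm-grup} b) together with the computation already carried out in the proof of Proposition~\ref{lipseste-1-la-star}. Everything else is formal.
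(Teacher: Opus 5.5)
Your proof is correct and is essentially the paper's argument. Additivity of $P$ makes $x^*\oplus y^*$ one of the two roots of $P(X)\oplus(x\oplus y)$. Proposition~\ref{lipseste-1-la-star} shows that $x^*$ and $y^*$ have no $2^0$ term, so neither does $x^*\oplus y^*$, whose exponents form a subset of theirs. Hence $x^*\oplus y^*$ is the smaller root. The paper proceeds in exactly this way.
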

	
	\begin{proof}
		Since $ x^*{\boxedexp{2}} \oplus x^* =x $ and $ y^*{\boxedexp{2}} \oplus y^* =y $, it follows that $ (x^* \oplus y^*){\boxedexp{2}} \oplus (x^* \oplus y^*) =x \oplus y $. So $(x \oplus y)^* = x^* \oplus y^*$ or $(x \oplus y)^* = x^* \oplus y^* \oplus 1$.
		
		We write $x^* = 2^{\beta_1}+\dots + 2^{\beta_n}$, where $\beta_1 > \dots > \beta_n $, and $y^* = 2^{\gamma_1}+\dots + 2^{\gamma_m}$, where $\gamma_1 > \dots > \gamma_m $. Then, from Proposition~\ref{lipseste-1-la-star}, we have that $\beta_n>0$, $\gamma_m>0$. 
		
		We have that $x^* \oplus y^* = 2^{\eta_1} + \dots + 2^{\eta_p}$, where $\eta_1>\dots>\eta_p$, and $\{ \eta_1, \dots, \eta_p \} \subset \{ \beta_1, \dots, \beta_n, \gamma_1, \dots, \gamma_m \} $. It follows that $\eta_p > 0$. From Proposition~\ref{lipseste-1-la-star}, it follows that $ x^* \oplus y^* \oplus 1 > x^* \oplus y^* $. Thus $(x \oplus y)^* = x^* \oplus y^*$.
	\end{proof}

	\begin{definition}
		For an ordinal $u$, we denote $P[u] = \{P(x) \mid x \in u  \}$.
	\end{definition}
	
	\begin{remark}
		For a field ordinal $u$, $P[u]$ is an additive subgroup of $u$.
	\end{remark}
	
	\begin{proposition} \label{echiv-2-inchis}
		Let $u$ be an ordinal. Then $u$ is a quadratically closed field if and only if $u$ is a perfect field and $P[u] = u$.
	\end{proposition}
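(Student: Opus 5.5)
We prove both directions, assuming throughout that $u$ is a field ordinal (the statement concerns fields) and working in characteristic $2$, where every quadratic can be reduced to one of two normal forms.

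\emph{Forward direction.} Suppose $u$ is quadratically closed. For $\beta \in u$, the polynomial $X\boxedexp{2} \oplus \beta$ has a root in $u$, so every element of $u$ is a square. In characteristic $2$ this is exactly perfectness. For $x \in u$, the polynomial $X\boxedexp{2} \oplus X \oplus x$ has a root $y \in u$, which gives $P(y) = x$. Hence $u \subseteq P[u]$. Since $u$ is a field, $P[u] \subseteq u$, so $P[u] = u$.

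\emph{Converse.} Assume $u$ is a perfect field with $P[u] = u$, and take a nonconstant polynomial of degree at most $2$ over $u$. If it has degree $1$, it has a root because $u$ is a field. If it has degree $2$, dividing by the leading coefficient makes it monic, say $X\boxedexp{2} \oplus a \otimes X \oplus b$ with $a, b \in u$. There are two cases.
\begin{itemize}
\item If $a = 0$, perfectness gives $c \in u$ with $c\boxedexp{2} = b$. Then $c$ is a root, since $c\boxedexp{2} \oplus b = 0$ in characteristic $2$.
\item If $a \neq 0$, substitute $X = a \otimes Y$. The polynomial becomes $a\boxedexp{2} \otimes \bigl(Y\boxedexp{2} \oplus Y \oplus (b \oslash a\boxedexp{2})\bigr)$. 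Because $P[u] = u$, there is $y \in u$ with $P(y) = b \oslash a\boxedexp{2}$, and this $y$ is a root of the bracketed factor. Therefore $x = a \otimes y \in u$ is a root of the original polynomial.
\end{itemize}
In every case the polynomial has a root in $u$, so $u$ is $2$-closed, that is, quadratically closed.

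Nothing here is a real obstacle. The only point needing care is that the statement implicitly assumes $u$ is a field. The substitution step is the only computation; it expands as $(a\otimes Y)\boxedexp{2} \oplus a \otimes a \otimes Y \oplus b = a\boxedexp{2} \otimes \bigl(Y\boxedexp{2} \oplus Y \oplus b \oslash a\boxedexp{2}\bigr)$, using commutativity and the inverse of $a \neq 0$ in $u$.
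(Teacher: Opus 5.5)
Your proof is correct and is essentially the paper's argument. The paper calls the forward direction obvious, which you make explicit via roots of $X\boxedexp{2} \oplus \beta$ and $X\boxedexp{2} \oplus X \oplus x$; for the converse it likewise handles $a = 0$ by perfectness and $a \neq 0$ by taking $y$ with $P(y) = b \otimes c\boxedexp{2}$ ($c$ the inverse of $a$) and checking that $a \otimes y$ is a root, which is exactly your substitution $X = a \otimes Y$ written out.
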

	
	\begin{proof}
		The implication from left to right is obvious. For the other implication, consider an arbitrary monic polynomial of degree 2 over $u$, let this be $f = X\boxedexp{2} \oplus a \otimes X \oplus b$. 
		
		If $a = 0$, then $f$ has a root in $u$, because $u$ is perfect.
		
		If $a \neq 0$, we denote by $c$ the inverse of $a$ in $u$. Since $P[u] = u$, it follows that there exists $y \in u$ such that $P[y]=b \otimes c \boxedexp{2}$. Then $a \otimes y$ is a root of $f$, because $a\boxedexp{2} \otimes y\boxedexp{2} \oplus a\boxedexp{2} \otimes y \oplus b =0$.
	\end{proof}
	
	\begin{lemma} \label{lema-u-x-star}
		Let $u$ be an ordinal. If $u$ is a perfect field, but not quadratically closed, then $u = x^*$, where $x$ is the smallest ordinal in $u \setminus P[u]$.
	\end{lemma}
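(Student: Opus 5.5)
The plan is to apply the simplest extension theorem, Theorem~\ref{set} g), and identify the lexicographically least polynomial over $u$ with no Nim root in $u$. Since $u$ is not quadratically closed, Proposition~\ref{echiv-2-inchis} gives $P[u] \neq u$, so the ordinal $x$ (the least element of $u \setminus P[u]$) exists. Also, $u$ is not algebraically closed. Theorem~\ref{set} g) then says that $u$ is a root of the lexicographically least polynomial $f \in u[X]$ without roots in $u$. I would show that $f = X\boxedexp{2} \oplus X \oplus x$.

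The first step is to check that every polynomial lexicographically smaller than $X\boxedexp{2} \oplus X \oplus x$ has a root in $u$. Degree-one polynomials always have roots, since $u$ is a field. A monic degree-two polynomial $X\boxedexp{2} \oplus a \otimes X \oplus b$ is smaller exactly when either $a = 0$, or $a = 1$ and $b < x$. (I assume the lexicographic order compares the $X$-coefficient before the constant term, as in Conway/Siegel.) In the case $a=0$, the polynomial $X\boxedexp{2} \oplus b$ has a root because $u$ is perfect. In the case $a = 1$ and $b < x$, minimality of $x$ gives $b \in P[u]$, so $b = P(y)$ for some $y \in u$, and $y$ is a root. The polynomial $X\boxedexp{2} \oplus X \oplus x$ itself has no root in $u$, since otherwise $x \in P[u]$. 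Hence it is the lexicographically least rootless polynomial, and Theorem~\ref{set} g) yields $P(u) = u\boxedexp{2} \oplus u = x$.

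It remains to show that $u$ is the smallest root, i.e.\ that $u = x^*$. The two roots of $P(X) \oplus x$ are $u$ and $u \oplus 1$. Since $u$ is a field it is a group ordinal, so $u = 2^{\beta}$ with $\beta > 0$ because $u > 2$. By Theorem~\ref{set} b), or directly by Proposition~\ref{lipseste-1-la-star}, we get $u \oplus 1 = u + 1 > u$. Therefore $x^* = u$.

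The only delicate point is the lexicographic ordering convention used in Theorem~\ref{set} g): I must make sure that all polynomials $X\boxedexp{2} \oplus b$ precede $X\boxedexp{2} \oplus X \oplus b'$, and that degree-one polynomials precede degree-two ones. I expect this to match the convention of \cite{siegel} (degree first, then coefficients from the top down). If the convention differed, a brief reduction would be needed: every quadratic with $a \neq 0$ reduces via Proposition~\ref{echiv-2-inchis}'s substitution to $X\boxedexp{2} \oplus X \oplus b \otimes c\boxedexp{2}$, so I would fall back on that argument.
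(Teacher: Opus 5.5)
Your proposal is correct and follows the paper's proof. Perfectness and the minimality of $x$ make $X\boxedexp{2} \oplus X \oplus x$ the least rootless polynomial, Theorem~\ref{set} g) then gives $u\boxedexp{2} \oplus u = x$, and one excludes $u \oplus 1 < u$ (the paper via the group property of $u$, you via $u = 2^{\beta}$ and Proposition~\ref{lipseste-1-la-star}). One small slip: $u = 2$ is allowed, since it is perfect and not quadratically closed, so you should write $u \geq 2$ rather than $u > 2$; this still gives $\beta > 0$.
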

	
	\begin{proof}
		From Proposition~\ref{echiv-2-inchis}, we have that $u \setminus P[u] \neq \emptyset$, so let $x$ be the minimum of this set. Then the polynomial $X\boxedexp{2} \oplus X \oplus x \in u[X]$ has no roots in $u$ and, being of degree 2, it follows that it is irreducible over $u$. Since $u$ is perfect, any polynomial of the form $X\boxedexp{2} \oplus a \in u[X]$ is reducible. Thus, from the minimality of $x$, it follows that the smallest irreducible polynomial over $u$ is $X\boxedexp{2} \oplus X \oplus x$. From Theorem~\ref{set} g) it follows that $u\boxedexp{2} \oplus u = x$. If $u \oplus 1 < u$, then $u \oplus 1 \in u$, and $ (u \oplus 1) \oplus 1 = u \notin u$, contradicting the fact that $u$ is a group. Thus $u = x^*$.
	\end{proof}

	\begin{definition}
		For a nonzero ordinal $u$, we define:
		\begin{align*}
			L(u) &= \{ \lambda \in u \setminus \{0\} \mid \text{ there exists no } \beta \in u \text{ such that } \lambda \oplus P(\beta) \\ &  \text{ can be written as a finite Nim sum of ordinals smaller than } \lambda \}.
		\end{align*}
		
	\end{definition}

	\begin{proposition} \label{minim-L}
		Let $u$ be a field ordinal such that $P[u] \neq u$. Let $ x = \min(u \setminus P[u])$. Then $x = \min L(u)$.
	\end{proposition}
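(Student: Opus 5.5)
Both inequalities rest on one fact: since $u$ is a field ordinal, $(u,\oplus)$ is a group. Hence the additive subgroup $P[u]$ of $u$ is closed under $\oplus$, and every ordinal below an element of $u$ again lies in $u$. In particular, a finite Nim sum of ordinals smaller than $\lambda \in u$ lies in $u$. The plan is to show $x \in L(u)$ and then that no smaller nonzero ordinal lies in $L(u)$.

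\emph{Step 1: $x \in L(u)$.} We have $x \neq 0$, since $0 = P(0) \in P[u]$. Suppose, towards a contradiction, that $\beta \in u$ satisfies $x \oplus P(\beta) = \gamma_1 \oplus \dots \oplus \gamma_k$ with every $\gamma_j < x$. By minimality of $x$, each $\gamma_j$ lies in $P[u]$, so the Nim sum does as well, because $P[u]$ is an additive group. Then $x = P(\beta) \oplus (\gamma_1 \oplus \dots \oplus \gamma_k) \in P[u]$, contradicting the choice of $x$.

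\emph{Step 2: no nonzero $\lambda < x$ lies in $L(u)$.} Take such a $\lambda$. Since $\lambda < x$, minimality of $x$ gives $\lambda \in P[u]$, say $\lambda = P(\beta)$ with $\beta \in u$. Then $\lambda \oplus P(\beta) = 0$, which is the empty Nim sum of ordinals smaller than $\lambda$. If one prefers to avoid empty sums, write $0 = 0$ instead; this is legitimate because $0 < \lambda$ when $\lambda \neq 0$. Either way $\lambda \notin L(u)$. Combining the two steps gives $x = \min L(u)$.

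The argument is essentially elementary. The only point needing care is the convention in the definition of $L(u)$: zero must count as a finite Nim sum of ordinals below $\lambda$, which holds since $0 < \lambda$. The other point is that $P[u]$ is closed under $\oplus$, which is the earlier remark that $P[u]$ is an additive subgroup; it follows from the additivity $P(a \oplus b) = P(a) \oplus P(b)$ in characteristic $2$.
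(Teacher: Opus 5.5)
Your proposal is correct and follows the paper's proof essentially step for step. Like the paper, you show $x \in L(u)$ by using minimality of $x$ and the fact that $P[u]$ is an additive subgroup, and you rule out any smaller element of $L(u)$ because it would lie in $P[u]$ and cancel to $0$. Your choice of an arbitrary $\beta \in u$ with $P(\beta)=\lambda$, where the paper takes $\beta = y^*$, is only a cosmetic difference.
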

	
	\begin{proof}
		We prove that $x \in L(u)$. Assume towards a contradiction that $x \notin L(u) $, then there exist $\beta \in u$, and $ \lambda_1, \dots , \lambda_n < x $ such that $x \oplus P(\beta) = \lambda_1 \oplus \dots \oplus \lambda_n$. Since $ x = \min(u \setminus P[u])$, it follows that 
		$ \lambda_1, \dots , \lambda_n \in P[u]$. Since $P[u]$ is an additive subgroup of $u$, it follows that $ x \in P[u] $, a contradiction.
		
		Let $y \in L(u)$. Assume towards a contradiction that $x>y$. Then $y \in P[u]$. For $\beta = y^* \in u$, $y \oplus P(\beta) = 0$, contradicting the fact that $ y \in L(u)$.
	\end{proof}
	
	\begin{proposition} \label{L-nevid}
		Let $u$ be a field ordinal. Then $L(u) \subset u \setminus P[u]$.
	\end{proposition}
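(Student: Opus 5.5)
We argue directly from the definition of $L(u)$: an element of $P[u]$ can never belong to $L(u)$. Let $\lambda \in L(u)$. By definition $\lambda \in u \setminus \{0\}$, so it remains to show $\lambda \notin P[u]$.

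Suppose towards a contradiction that $\lambda \in P[u]$. Then there is $\beta \in u$ with $P(\beta) = \lambda$. For this $\beta$ we get $\lambda \oplus P(\beta) = \lambda \oplus \lambda = 0$, because $(\ordinali,\oplus)$ has characteristic $2$. Now $0$ is a finite Nim sum of ordinals smaller than $\lambda$: it is the empty Nim sum, or, if one insists on a nonempty sum, the one-term sum $0$, which is legitimate since $0 < \lambda$ because $\lambda \neq 0$. So $\beta$ is exactly a witness of the kind that the definition of $L(u)$ forbids, contradicting $\lambda \in L(u)$. Hence $\lambda \in u \setminus P[u]$, and therefore $L(u) \subset u \setminus P[u]$.

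This is the same argument as the second half of the proof of Proposition~\ref{minim-L}, which chose $\beta = y^*$ for $y \in P[u]$. Here we may take any preimage $\beta \in u$ of $\lambda$ under $P$. We cannot simply take $\lambda^*$, since $\lambda^*$ need not lie in $u$ a priori; the definition of $P[u]$ already provides a preimage in $u$, so we use that one.

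The argument uses neither that $u$ is a field nor the group structure of $P[u]$. The only point needing care is reading "finite Nim sum of ordinals smaller than $\lambda$" so that $0$ counts, which the exclusion $\lambda \neq 0$ secures. So there is no real obstacle, and the statement also holds vacuously when $L(u) = \emptyset$.
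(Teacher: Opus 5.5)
Your proof is correct and takes the same approach as the paper. The paper's one-line proof takes $\lambda \in P[u]$ and concludes that $\lambda \notin L(u)$ (its text has a typo and writes $\lambda \notin P[u]$), which is the contrapositive of your argument; your explicit choice of a preimage $\beta \in u$ giving $\lambda \oplus P(\beta) = 0$, with $0 < \lambda$, just spells out what the paper calls ``clear''.
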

	
	\begin{proof}
		Let $\lambda \in P[u]$. Then, clearly, $\lambda \notin P[u]$.
	\end{proof}

	\begin{corollary} \label{cor-L-nevid}
		Let $u$ be a perfect field ordinal. Then $u$ is quadratically closed iff $L(u) = \emptyset$.
	\end{corollary}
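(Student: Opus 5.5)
The plan is to combine Proposition~\ref{echiv-2-inchis} with Proposition~\ref{minim-L} and Proposition~\ref{L-nevid}. Since $u$ is assumed to be a perfect field ordinal, Proposition~\ref{echiv-2-inchis} says that $u$ is quadratically closed if and only if $P[u] = u$. So it suffices to show that $P[u] = u$ holds exactly when $L(u) = \emptyset$.

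For the direction ``quadratically closed implies $L(u)=\emptyset$'', assume $u$ is quadratically closed. Then $P[u] = u$, so $u \setminus P[u] = \emptyset$. Proposition~\ref{L-nevid} gives $L(u) \subset u \setminus P[u]$, hence $L(u) = \emptyset$. Note that the written proof of Proposition~\ref{L-nevid} contains a typo. The intended argument is this: if $\lambda \in P[u]$, say $\lambda = P(\beta)$ with $\beta \in u$, then $\lambda \oplus P(\beta) = 0$. This is the empty Nim sum, which is trivially a finite Nim sum of ordinals smaller than $\lambda$. So $\lambda \notin L(u)$. I would restate this one-line justification inline so that the corollary does not lean on the typo.

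For the converse, argue by contraposition. Suppose $u$ is not quadratically closed. By Proposition~\ref{echiv-2-inchis} and perfectness, $P[u] \neq u$, so $u \setminus P[u]$ is a nonempty set of ordinals and has a least element $x$. Proposition~\ref{minim-L} then gives $x = \min L(u)$, so in particular $L(u) \neq \emptyset$.

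There is no real obstacle; the statement follows directly from these results. The only point needing care is the convention that the empty Nim sum (equal to $0$) counts as a ``finite Nim sum of ordinals smaller than $\lambda$''. This convention is already used in the proof of Proposition~\ref{minim-L}, and it is what makes $P[u] \cap L(u) = \emptyset$ hold.
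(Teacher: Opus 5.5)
Your proof is correct and is the argument the paper intends: the corollary is stated without proof right after Proposition~\ref{L-nevid}, and it follows by combining Proposition~\ref{echiv-2-inchis}, Proposition~\ref{minim-L} and Proposition~\ref{L-nevid} as you do. Your reading of the typo in Proposition~\ref{L-nevid} (it should conclude $\lambda \notin L(u)$) is the intended one, and no empty-sum convention is needed there: $\lambda \neq 0$ gives $0 < \lambda$, so $0$ is already a one-term Nim sum of ordinals smaller than $\lambda$.
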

	\begin{lemma} \label{def-alternativa-Lu}
		Let $u$ be an ordinal. Then:
		\begin{align*}
			L(u) = \{ \lambda \in u\setminus \{0\}  \mid \lambda \text{ is a group and there exists no } \beta \in u, \ \lambda_1 < \lambda \text{ such that }  P(\beta) = \lambda \oplus \lambda_1 \}.
		\end{align*} 
		
	\end{lemma}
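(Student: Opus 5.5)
We show that the two sets coincide by proving both inclusions. The tool for both is the observation that, for ordinals $\lambda_1, \dots, \lambda_n < \lambda$, the Nim sum $\lambda_1 \oplus \dots \oplus \lambda_n$ is smaller than $2^{a}$, where $2^a$ is the least power of $2$ greater than $\lambda$; in particular, if $\lambda$ is itself a power of $2$, then every finite Nim sum of ordinals smaller than $\lambda$ lies in $\lambda$. By Theorem~\ref{thm-grup} a), the power of $2$ below which it lies is a group. Conversely, any element of the group ordinal $\lambda$ is trivially a one-term Nim sum of ordinals smaller than $\lambda$. So, when $\lambda$ is a group, the condition ``$\lambda \oplus P(\beta)$ is a finite Nim sum of ordinals smaller than $\lambda$'' is equivalent to ``$\lambda \oplus P(\beta) = \lambda_1$ for some $\lambda_1 < \lambda$'', that is, $P(\beta) = \lambda \oplus \lambda_1$.

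For the inclusion ``$\supseteq$'', let $\lambda$ be a nonzero group ordinal in $u$ such that no $\beta \in u$ and $\lambda_1 < \lambda$ satisfy $P(\beta) = \lambda \oplus \lambda_1$. Suppose $\lambda \notin L(u)$, so that $\lambda \oplus P(\beta) = \lambda_1 \oplus \dots \oplus \lambda_n$ with all $\lambda_i < \lambda$. Since $\lambda$ is a group, the right-hand side is some $\lambda' < \lambda$, and then $P(\beta) = \lambda \oplus \lambda'$, which contradicts the hypothesis. Hence $\lambda \in L(u)$.

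For the inclusion ``$\subseteq$'', let $\lambda \in L(u)$. Taking $n = 1$ in the definition of $L(u)$ shows directly that there are no $\beta \in u$ and $\lambda_1 < \lambda$ with $P(\beta) = \lambda \oplus \lambda_1$. It remains to show that $\lambda$ is a power of $2$; this is the main point of the proof. By Theorem~\ref{thm-grup} b), write $\lambda = 2^{\beta_1} \oplus \dots \oplus 2^{\beta_n}$ with $\beta_1 > \dots > \beta_n$. If $n \geq 2$, each summand $2^{\beta_i}$ is strictly smaller than $\lambda$. Taking $\beta = 0 \in u$ (note that $u \neq 0$, since $\lambda \in u$), we get $P(0) = 0$, so $\lambda \oplus P(0) = \lambda$ is a finite Nim sum of the ordinals $2^{\beta_i} < \lambda$. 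This contradicts $\lambda \in L(u)$. Therefore $n = 1$ and $\lambda = 2^{\beta_1}$ is a group.

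Combining the two inclusions gives the stated equality. The only point needing care is the closure fact used in the ``$\supseteq$'' direction: a group ordinal $\lambda$ is closed under $\oplus$, so a finite Nim sum of elements of $\lambda$ stays in $\lambda$. This follows immediately from the definition of $\lambda$ being a group, by induction on the number of summands.
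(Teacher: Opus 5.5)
Your proof is correct and follows the paper's argument: for $\supseteq$ you use that the group ordinal $\lambda$ is closed under $\oplus$, and for $\subseteq$ you take $\beta = 0$ to rule out non-groups, except that you write a non-group $\lambda$ as a Nim sum of smaller ordinals via its binary expansion (Theorem~\ref{thm-grup}) where the paper cites Theorem~\ref{set}~a). (One small slip: with ``greater than'' the ``in particular'' in your opening paragraph does not follow, since it should read ``the least power of $2$ that is $\geq \lambda$''; but your last paragraph justifies the closure fact you need correctly.)
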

	
	\begin{proof}
		Let $A = \{ \lambda \in u\setminus \{0\}  \mid \lambda \text{ is a group and there exists no } \beta \in u, \ \lambda_1 < \lambda \text{ such that }  P(\beta) = \lambda \oplus \lambda_1 \}$.
		
		Let $\lambda \in L(u)$. If $\lambda$ is not a group, and we know that $\lambda > 0$,  then by Theorem~\ref{set} a), $\lambda$ is written as the sum of two ordinals smaller than it, so for $\beta =0$, we have that $\lambda \oplus P(\beta)$ can be written as a finite Nim sum of ordinals smaller than $\lambda$, a contradiction. If there exist $\beta \in u, \ \lambda_1 < \lambda $ such that $ P(\beta) = \lambda \oplus \lambda_1 $, then $ \lambda \notin L(u) $, a contradiction. Thus $L(u) \subset A $.
		
		Let $\lambda \in A$. If there exist $\beta \in u$, and $ \lambda_1, \dots , \lambda_n < \lambda $ such that $\lambda \oplus P(\beta) = \lambda_1 \oplus \dots \oplus \lambda_n$, then $P(\beta) = \lambda \oplus (\lambda_1 \oplus \dots \oplus \lambda_n)$ and $\lambda_1 \oplus \dots \oplus \lambda_n < \lambda$, because $\lambda$ is a group (in particular, $\lambda>0$), contradicting $\lambda \in A$. So $A \subset L(u)$.
		
	\end{proof}
	
	\begin{theorem} \label{claim-inch-patr}
		Let $u$ be a perfect field ordinal. 
		For each ordinal $\alpha$, we recursively define the ordinals $u_{\alpha}$ such that $u_0 =u$, $u_{\beta + 1}= u_{\beta}(u_{\beta})$ and $u_{\alpha}= \sup\{u_{\beta} \mid \beta < \alpha \}$, for a limit ordinal $\alpha$. 
		
		Let $v := \otype(L(u))$. Suppose that $v \neq 0 $. Let $\varepsilon$ be the smallest $\varepsilon$-number strictly greater than $v$. Then the quadratic closure of $u$ is $u^{\varepsilon} = u_\varepsilon$.
		
	\end{theorem}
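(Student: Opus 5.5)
We prove the statement by transfinite induction along the tower $(u_\alpha)$. The invariant we propagate describes, at every stage, both the current field ordinal and the lexicographically least irreducible polynomial over it. The model case is Lenstra's situation and the computation in Proposition~\ref{inch-perfecta-ord-corp-alg-inchis}. There, each $u_\beta$ is perfect by Proposition~\ref{extindere-alg-corp-perfect}, since all extensions are algebraic (in fact separable of degree $2$). Whenever $u_\beta$ is not quadratically closed, Lemma~\ref{lema-u-x-star} and Theorem~\ref{set} g) give $P(u_\beta) = x_\beta$ with $x_\beta = \min L(u_\beta)$ (Proposition~\ref{minim-L}). Proposition~\ref{urm-corp-2} then gives $u_{\beta+1} = u_\beta(u_\beta) = u_\beta^{2}$. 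A straightforward induction therefore yields $u_\beta = u^{2^\beta}$ for all $\beta$ up to the first stage at which the field becomes quadratically closed. For $\beta=\varepsilon$ we have $2^{\varepsilon} = \varepsilon$, so $u_\varepsilon = u^{\varepsilon}$ as soon as we show that nothing stops before $\varepsilon$ and that $u_\varepsilon$ is quadratically closed.

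The core step is to track the set $L(u_\beta)$ using the characterization in Lemma~\ref{def-alternativa-Lu}. We must show two things. First, passing from $u_\beta$ to $u_{\beta+1}$ removes the least element of $L$: the new root $u_\beta$ satisfies $P(u_\beta) = x_\beta$, so $x_\beta$ is killed. Second, the passage only adds new elements of $L$ lying in $u_{\beta+1}\setminus u_\beta$, in a controlled way. Concretely, the candidates are group ordinals of the form $u_\beta \otimes \lambda$ with $\lambda \in L(u_\beta)$, which have ordinal value $u_\beta \times \lambda$ by Theorem~\ref{set} d). To analyze them we write $x = a \oplus u_\beta \otimes b$ and compute
\[
P(x) = P(a) \oplus b^{\boxedexp{2}} \otimes x_\beta \oplus u_\beta \otimes (b^{\boxedexp{2}} \oplus b),
\]
and then ask which elements $u_\beta\otimes c \oplus d$ lie in $P[u_{\beta+1}]$ modulo smaller terms. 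Perfectness of $u_\beta$ allows $b^{\boxedexp{2}}$ to range over everything.

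From this computation we expect the following bookkeeping. The order type of $L(u_{\beta+1})$ is obtained from that of $L(u_\beta)$ by deleting the first point and inserting a block indexed by ordinals of the shape $2^{(\cdot)}$. Consequently, the order type $v_\beta$ of $L(u_\beta)$ evolves like a Cantor normal form with base $2$: removing the least element lowers the last exponent, then regrows lower terms. We set up an ordinal potential $\Phi(\beta)$, built from $v_\beta$, which strictly decreases (in a suitably shifted sense) with each step. We then show that $L(u_\beta)\neq\emptyset$ for all $\beta<\varepsilon$, and that at limit stages $L(u_\lambda)$ is computed as the appropriate limit: an element survives into $L(u_\lambda)$ only if it survives in cofinally many $u_\beta$.

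The main obstacle is proving exactly that $L$ becomes empty precisely at the first $\varepsilon$-number above $v$. To get this, we plan to prove by induction an explicit formula: for $\beta$ of the form $2^{\gamma}$, the set $L(u_\beta)$ has order type roughly $2^{v}$ or $v$ with a tower shift. Iterating $v\mapsto 2^{v}$ $\omega$ times reaches $\varepsilon$ in the manner of Proposition~\ref{epsilon-0}, at which point $L(u_\varepsilon)=\emptyset$. Conversely, since $\beta < \varepsilon$ forces $2^\beta<\varepsilon$, the set $L(u_\beta)$ is still nonempty for such $\beta$. Corollary~\ref{cor-L-nevid} then shows that $u_\varepsilon$ is quadratically closed, while no $u_\beta$ with $\beta<\varepsilon$ is. Minimality holds because every $u_{\beta+1}$ is generated by a root of a quadratic, so $u_\varepsilon$ lies inside any quadratically closed extension of $u$.
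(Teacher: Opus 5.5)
Your local computation is the right tool and is the one the paper uses. Carried through, writing $x = a \oplus u_\beta \otimes b$ and using $u_\beta \otimes u_\beta = u_\beta \oplus \lambda_\beta$ yields $L(u_{\beta+1}) = (L(u_\beta)\setminus\{\lambda_\beta\}) \cup \{u_\beta \times \lambda \mid \lambda \in L(u_\beta)\}$. At a limit $\delta$, an element lies in $L(u_\delta)$ iff it lies in $L(u_\beta)$ for all large $\beta<\delta$.

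The gap is the global part. You assert, but do not prove, that $L(u_\beta)\neq\emptyset$ for all $\beta<\varepsilon$ and that $L(u_\varepsilon)=\emptyset$, and the mechanism you sketch for this is wrong:
\begin{enumerate}[a)]
\item Deleting the least point of an infinite well-order does not change its order type, and the new block is a full copy of $L(u_\beta)$ placed above $u_\beta$. So order types grow (for infinite $v$, $\otype(L(u_n)) = v\times 2^n$), and there is no Cantor-normal-form ``lowering'' or decreasing potential.
\item The order types of the $L(u_\beta)$ alone cannot detect emptiness at limit stages. For $v=1$ the successor rule gives $\otype(L(u_n))=1$ for every $n<\omega$, yet the limit rule gives $L(u_\omega)=\emptyset$.
\item Your heuristic that iterating $v\mapsto 2^v$ reaches $\varepsilon$ fails whenever $v$ is itself an $\varepsilon$-number, since the iteration is then stationary at $v<\varepsilon$. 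This is exactly the case used later, where $\otype(L(\varepsilon_\alpha(\varepsilon_\alpha))) = \varepsilon_\alpha$ in Theorem~\ref{eps-corpuri-patratic-inchise}.
\item ``$2^\beta<\varepsilon$, hence $L(u_\beta)\neq\emptyset$'' does not follow from anything you have set up.
\end{enumerate}

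The missing idea is to count the deleted points together with the survivors. Every new element of $L(u_{\beta+1})$ is at least $u_\beta$, which exceeds everything in $M(\beta) := \bigcup_{\gamma\le\beta} L(u_\gamma)$. Hence the sets $M(\alpha)$ end-extend one another, and the deleted points $\lambda_\gamma$ ($\gamma<\alpha$) are precisely the first $\alpha$ elements of $M(\alpha)$. So $L(u_\alpha)$ is $M(\alpha)$ with its first $\alpha$ elements removed.

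Write $f(\alpha) = \otype(M(\alpha))$ and $g(\alpha) = \otype(L(u_\alpha))$. Then:
\begin{enumerate}[a)]
\item $f(\alpha) = \alpha+g(\alpha)$;
\item $f(\alpha+1) = f(\alpha)+g(\alpha)$;
\item $f(\delta) = \sup\{f(\beta) \mid \beta<\delta\}$ at limits;
\item $f(0)=v$.
\end{enumerate}
So the stage $y$ at which the field becomes quadratically closed is the least $\alpha$ with $f(\alpha)=\alpha$.

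Only with this in hand do the estimates work. The bound $f(\alpha)\le v\times 2^\alpha$ rules out $\varepsilon<y$, since it would give $f(\varepsilon)\le v\times\varepsilon=\varepsilon$ while $g(\varepsilon)>0$. Conversely, $f(\beta)\ge\beta+2^\beta>\beta$ for $\omega\le\beta<\varepsilon$ gives $y\ge\varepsilon$; at an $\varepsilon$-number $\beta\le v$ this uses $f(\beta)\ge v+\beta$. The finite case is handled separately by $f(n)=2^n(v-1)+n+1$.

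Your minimality argument for $u_\varepsilon$ is fine. But without this bookkeeping, the proposal does not determine the closure stage.
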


	\begin{proof}
		We know from Proposition~\ref{n-inch-e-ordinal} that the quadratic closure of $u$ is also an ordinal, let this be $w$. 
		
		Assume towards a contradiction that $w \neq u_{\alpha}$, for any ordinal $ \alpha $. We prove by induction that $u_{\alpha} \subset w $, for every $\alpha$. Clearly, $u_0 \subset w$. If $u_{\beta} \subset w $, since $w \neq u_{\beta}$ and $w$ is an ordinal, then $u_{\beta} \in w$, so $u_{\beta + 1} \subset w$. If $\alpha$ is a limit ordinal and $u_{\beta} \subset w$ for every $\beta < \alpha$, then $u_{\alpha}= \sup\{u_{\beta} \mid \beta < \alpha \} \subset w$. We prove by induction that $u_{\alpha} \geq \alpha $. For $\alpha \in \{0,1,2\} $, it is obvious. For $\beta>1$, since $u_{\beta+1} \geq u_{\beta}^2 \geq \beta^2$, from Proposition~\ref{corpuri}, it follows that $u_{\beta+1} \geq \beta +1$. For a limit ordinal $\alpha$, we have that $u_{\alpha}= \sup\{u_{\beta} \mid \beta < \alpha \} \geq \sup\{\beta \mid \beta < \alpha\} = \alpha $. Thus, $w \geq \alpha$ for every ordinal $\alpha $, a contradiction.  
		So there exists an ordinal $y$ such that $u_y = w$. Then $y$ is the smallest ordinal $\alpha$ such that $ u_{\alpha} $ is quadratically closed. So for any $\alpha < y$, from Proposition~\ref{urm-corp-2}, we have that $ u_{\alpha+1} = u_{\alpha}^2 $. Then $w = u^{2^y}$. It remains to be proven that $y$ is the smallest $ \varepsilon$-number greater than $v$.
		
		Because $v \neq 0$, $L(u)$ is non-empty, and so is $u\setminus P[u]$, by Proposition~\ref{L-nevid}. Then $u$ is not quadratically closed, so $y >0$. We know, by Corollary~\ref{cor-L-nevid}, that $L(u_y) = \emptyset$ and for every $\alpha < y$, $L(u_{\alpha}) \neq \emptyset $. For an ordinal $\alpha < y$, we denote $\lambda_{\alpha} := \min L(u_{\alpha})$. From Lemma~\ref{lema-u-x-star} and Proposition~\ref{minim-L} it follows that $u_{\alpha}=\lambda_{\alpha}^*$, for every $\alpha < y$.
		
		For a limit ordinal $\delta \leq y$, we prove that:
		
		$$
		L(u_{\delta}) = \bigcup_{\alpha < \delta} \bigcap_{\delta>\beta \geq \alpha} L(u_{\beta});
		$$
		
		$$
		L(u_{\delta}) = \bigcap_{\alpha < \delta} \bigcup_{\delta>\beta \geq \alpha} L(u_{\beta}).
		$$

		Let $\lambda \in L(u_{\delta})$. Then $\lambda$ is a group, and $\{  \lambda + s \mid s < \lambda  \}\cap P[u_{\delta}] = \emptyset$. There exists $\alpha < \delta $ such that $ \lambda \in u_\alpha $, so $\lambda \in L(u_{\alpha})$. For every $ \delta > \beta \geq \alpha$, we have that $\lambda \in u_{\beta}$ and $\{  \lambda + s \mid s < \lambda  \}\cap P[u_{\beta}]  \subset     \{  \lambda + s \mid s < \lambda  \}\cap P[u_{\delta}] = \emptyset$, so $\lambda \in L(u_{\beta})$. Thus $L(u_{\delta}) \subset \bigcup_{\alpha < \delta} \bigcap_{\delta>\beta \geq \alpha} L(u_{\beta})$ and $ L(u_{\delta}) \subset  \bigcap_{\alpha < \delta} \bigcup_{\delta>\beta \geq \alpha} L(u_{\beta}) $.
		
		Let $\lambda \in \bigcup_{\alpha < \delta} \bigcap_{\delta>\beta \geq \alpha} L(u_{\beta}) $. Then there exists $\alpha < \delta $ such that $\lambda \in \bigcap_{\delta>\beta \geq \alpha} L(u_{\beta})$. It follows that $ \{  \lambda + s \mid s < \lambda  \}\cap P[u_{\beta}] = \emptyset$, for every $ \delta > \beta \geq \alpha$. Since $\bigcup_{\delta>\beta \geq \alpha} P[u_{\beta}] = P[u_{\delta}] $, it follows that $ \{  \lambda + s \mid s < \lambda  \}\cap P[u_{\delta}] = \emptyset$.
		Obviously $\lambda$ is a group, $\lambda \in u_{\delta}$, so $ \lambda \in L(u_{\delta}) $. 
		
		Let $ \lambda \in \bigcap_{\alpha < \delta} \bigcup_{\delta>\beta \geq \alpha} L(u_{\beta}) $. Then, for every $\alpha < \delta$, there exists $ \alpha \leq \beta_{\alpha} < \delta $ such that $ \lambda \in L(u_{\beta_{\alpha}}) $.
		So $\lambda \in u_{\delta}$ and is a group and $ \{  \lambda + s \mid s < \lambda  \}\cap P[u_{\beta_{\alpha}}] = \emptyset$, for every $\alpha < \delta$. Since $\bigcup_{\alpha < \delta} P[u_{\beta_{\alpha}}] = P[u_{\delta}] $, it follows that $ \{  \lambda + s \mid s < \lambda  \}\cap P[u_{\delta}] = \emptyset$. So $\lambda \in L(u_{\delta})  $.

		For an ordinal $\alpha < y$, we prove that:
		$$ L(u_{\alpha +1}) = (L(u_{\alpha}) \setminus \{ \lambda_{\alpha} \})  \cup \{  u_{\alpha} \times  \lambda  \mid \lambda \in L(u_{\alpha})  \}.  $$
		
		Let $ \lambda \in L(u_{\alpha+1}) $. Obviously $\lambda \neq \lambda_{\alpha}$, because $\lambda_{\alpha} = P(u_{\alpha})$. If $\lambda < u_{\alpha}$, since $P[u_{\alpha}] \subset P[u_{\alpha+1}]$, it follows that $\lambda \in L(u_{\alpha})$.
		
		If $ \lambda \geq u_{\alpha} $, since $\lambda \in u_{\alpha}(u_{\alpha})$, it follows that $\lambda = u_{\alpha} \times x + y$, where $x, y < u_{\alpha}$, and $x>0$.
		
		If $ x \notin L(u_{\alpha}) $, then there exist $\beta \in u_{\alpha}$, and $x_1, \dots, x_n < x$ such that $x \oplus P(\beta) = x_1 \oplus \dots \oplus x_n $. Let $\lambda_1 = u_{\alpha} \otimes x_1 \oplus y \oplus \beta \boxedexp{2} \otimes \lambda_{\alpha} $, $\lambda_2 =  u_{\alpha} \otimes x_2, \dots ,\lambda_n =  u_{\alpha} \otimes x_n $. Since $\lambda_{\alpha}, \beta < u_{\alpha}$ and $x_1, \dots , x_n < x$, it follows that $\lambda_1, \dots , \lambda_n < \lambda$, so $\lambda_1 \oplus \dots \oplus \lambda_n < \lambda$. We have that $\lambda \oplus (\lambda_1 \oplus \dots \oplus \lambda_n) = \lambda \oplus  u_{\alpha} \otimes (x \oplus P(\beta)) \oplus \beta \boxedexp{2} \otimes \lambda_{\alpha} = P(u_{\alpha} \otimes \beta)$, a contradiction.
		
		If $y>0$, then $\lambda = (u_{\alpha} \times x) \oplus y$ and $y, \ u_{\alpha} \times x< \lambda$, so $\lambda$ would not be a group, a contradiction.
		
		Thus $ L(u_{\alpha +1}) \subset (L(u_{\alpha}) \setminus \{ \lambda_{\alpha} \})  \cup \{  u_{\alpha} \times  \lambda  \mid \lambda \in L(u_{\alpha})  \}$.
		
		Let $\lambda \in L(u_{\alpha}) \setminus \{ \lambda_{\alpha} \} $. Then $\lambda$ is a group and $\lambda > \lambda_{\alpha}$. Assume towards a contradiction that there exist $ \lambda'<\lambda, \ \beta \in u_{\alpha+1} $ such that $\lambda \oplus  \lambda' = P(\beta)  $.
		There exist $x, y < u_{\alpha}$ such that $ \beta = u_{\alpha } \otimes x \oplus y $. Then $P(\beta)= u_{\alpha} \otimes (x\boxedexp{2} \oplus x) \oplus \lambda_{\alpha}\otimes x\boxedexp{2} \oplus y \boxedexp{2} \oplus y $.
		It follows that $ x\boxedexp{2} \oplus x = 0 $, so  $x = 0$ or $x =1 $, and also $ x\boxedexp{2} \otimes \lambda_{\alpha} \oplus y\boxedexp{2} \oplus y = \lambda \oplus \lambda' $. 
		
		If $x=0$, then $\lambda \oplus \lambda' = y\boxedexp{2} \oplus y$, contradicting $\lambda \in L(u_{\alpha})$. If $x=1$, then $ \lambda \oplus (\lambda' \oplus \lambda_{\alpha}) = y\boxedexp{2} \oplus y $, with $ \lambda', \lambda_{\alpha} < \lambda $, contradicting $\lambda \in L(u_{\alpha})$. Thus $ L(u_{\alpha}) \setminus \{ \lambda_{\alpha} \} \subset L(u_{\alpha+1}) $.
		
		Let $ \lambda = u_{\alpha} \times a $, where $a \in L(u_{\alpha})$. Since $u_{\alpha}, \ a $ are groups, it follows from Lemma~\ref{lema-produs-grupuri} that $\lambda$ is a group. Assume towards a contradiction that there exist $\lambda' < \lambda $, and $\beta \in u_{\alpha+1} $ such that $ \lambda \oplus \lambda' = P(\beta) $. There exist $x, y  < u_{\alpha}$ such that $\beta = u_{\alpha} \times x + y $ and $a' < a,\ b< u_{\alpha}$ such that $ \lambda' = u_{\alpha} \times a' + b $. From $ \lambda \oplus \lambda' = P(\beta) $ it follows that $ x\boxedexp{2} \oplus x = a \oplus a'  $. But $a \in L(u_{\alpha})$, a contradiction. So $  \{  u_{\alpha} \times  \lambda  \mid \lambda \in L(u_{\alpha})  \} \subset L(u_{\alpha+1})  $.
		
		For any ordinal $\alpha \leq y$, we define:
		
		$$ M(\alpha ) = \bigcup_{\beta \leq \alpha } L(u_{\beta}). $$
		
		We prove that for any $\alpha \leq  \alpha' \leq y$, $M(\alpha)$ is an initial segment of $M(\alpha')$.
		
		Clearly $M(\alpha) \subset M(\alpha') $.
		
		Let $\lambda \in M(\alpha')\setminus M(\alpha)$. Then $ \lambda \in L(u_{\beta}) $ for some $\beta > \alpha$. 
		
		We prove by induction on $\beta > \alpha$ that for any $\lambda \in L(u_{\beta}) \setminus M(\alpha) $, $\lambda > x$ for any $x \in M(\alpha)$.
		
		If $\beta = \alpha + 1$, the statement to be proven follows from the fact that $M(\alpha) \subset u_{\alpha} $ and $ L(u_{\alpha +1}) = (L(u_{\alpha}) \setminus \{ \lambda_{\alpha} \})  \cup \{  u_{\alpha} \times  \lambda  \mid \lambda \in L(u_{\alpha})  \}$.
		
		If $ \beta = \gamma +1 $, then $ L(u_{\gamma +1}) = (L(u_{\gamma}) \setminus \{ \lambda_{\gamma} \})  \cup \{  u_{\gamma} \times  \lambda  \mid \lambda \in L(u_{\gamma})  \}$ and the statement follows from the induction hypothesis and from the fact that any element from $\{  u_{\gamma} \times  \lambda  \mid \lambda \in L(u_{\gamma})  \}$ is at least $u_{\gamma}$, which is greater than or equal to $ u_{\alpha} $.
		
		If $\beta $ is a limit ordinal, then from the relation $L(u_{\beta}) = \bigcup_{a < \beta} \bigcap_{\beta> \delta \geq a} L(u_{\beta})$ it follows that there exists $ \alpha <  \gamma < \beta $ such that $\lambda \in L(u_\gamma) $ and we can apply the induction hypothesis. 
		
		Thus $\lambda > x$, for every $x \in M(\alpha)$. Since $\lambda \in M(\alpha')\setminus M(\alpha) $ was arbitrary, it follows that $M(\alpha)$ is an initial segment of $M(\alpha')$.
		
		We notice that, for all $\alpha \leq y$ limit ordinal, $M(\alpha) = \bigcup_{\delta < \alpha} M(\delta) $.
		
		We observe that from the relation $L(u_{\alpha +1}) = (L(u_{\alpha}) \setminus \{ \lambda_{\alpha} \})  \cup \{  u_{\alpha} \times  \lambda  \mid \lambda \in L(u_{\alpha})  \}$ it follows that for an ordinal $\alpha < y$, $L(u_{\alpha+1}) \setminus L(u_{\alpha}) $ is non-empty, because $L(u_{\alpha})$ is non-empty. Thus, it follows by induction that, for all $\alpha \leq y$, $\otype(M(\alpha)) \geq \alpha $.

		For an ordinal $\alpha \leq y$, we can define:
		$$ T(\alpha) = \{ en_{M(\alpha)}(x)   \mid x<\alpha  \}. $$
		
		We prove that if $\alpha \leq y $ is a limit ordinal, then:
		$$  \bigcup_{\beta < \alpha} T(\beta) = T(\alpha). $$
		
		Let $\lambda = en_{M(\alpha)}(x)$, with $x<\alpha$. There exists $\beta<\alpha$ such that $x<\beta$. Since $M(\beta)$ is an initial segment of $M(\alpha)$, it follows that $\lambda \in T(\beta)$. So $T(\alpha) \subset  \bigcup_{\beta < \alpha} T(\beta) $. Let $\lambda \in T(\beta)$, for some $\beta < \alpha$. We denote $\lambda = en_{M(\beta)}(x)$, where $x < \beta$. Then, since $M(\beta)$ is an initial segment of $M(\alpha)$, it follows that $\lambda = en_{M(\alpha)}(x) \in T(\alpha) $. So the relation is proven.
		
		We observe that from the fact that for any $\alpha < \alpha' \leq y$, $M(\alpha)$ is an initial segment of $M(\alpha')$, it follows that $T(\alpha)$ is an initial segment of $T(\alpha')$. Thus, for all $\alpha \leq y$ limit ordinal and for all $\delta < \alpha$:
		$$   \bigcup_{\delta<\beta < \alpha} T(\beta) = T(\alpha)  $$
		
		We prove by induction on $\alpha \leq y$ that $L(u_{\alpha}) = M(\alpha) \setminus T(\alpha)$.
		
		For $\alpha = 0$, $L(u_0) = M(0)$.
		
		For $\alpha = \beta +1$,  $M(\beta + 1) = M(\beta) \cup L(u_{\beta+1}) = L(u_{\beta}) \cup T(\beta) \cup \{  u_{\beta} \times  \lambda  \mid \lambda \in L(u_{\beta})  \} = L(u_{\beta+1}) \cup T(\beta) \cup \{ \lambda_{\beta} \}$. But $\lambda_{\beta} = \min L(u_{\beta}) =  \min (M(\beta) \setminus T(\beta))$ and $M(\beta)$ is an initial segment of $M(\beta+1)$, so $T(\beta+1) = T(\beta ) \cup \{ \lambda_{\beta} \} $. Since $ L(u_{\beta +1}) = (L(u_{\beta}) \setminus \{ \lambda_{\beta} \})  \cup \{  u_{\beta} \times  \lambda  \mid \lambda \in L(u_{\beta})  \}$ and $L(u_{\beta}) \cap T(\beta) = \emptyset$, it follows that $L(u_{\beta+1}) \cap T(\beta+1) = \emptyset$, so $L(u_{\alpha}) = M(\alpha) \setminus T(\alpha)$. 
		
		For a limit ordinal $\alpha$, $ L(u_{\alpha}) = \bigcup_{\alpha > \delta} \bigcap_{\alpha>\beta \geq \delta} L(u_{\beta})  = \bigcup_{\alpha > \delta} \bigcap_{\alpha>\beta \geq \delta}(M(\beta) \setminus T(\beta) ) $. 
		
		Let $ \lambda \in \bigcup_{\alpha > \delta} \bigcap_{\alpha>\beta \geq \delta}(M(\beta) \setminus T(\beta) ) $, then there exists $\delta < \alpha$ such that $ \lambda \in M(\beta) \setminus T(\beta) $, for every $ \beta \geq \delta $. It follows that $ \lambda \in M(\alpha) $ and $\lambda \notin T(\beta)$, for every $\alpha > \beta  \geq \delta$. So $\lambda \notin T(\alpha)$, from which it follows that $ \lambda \in M(\alpha) \setminus T(\alpha) $.
		
		Let $\lambda \in M(\alpha) \setminus T(\alpha) $. Since $M(\alpha) = \bigcup_{\delta < \alpha} M(\delta) $,  there exists $\delta < \alpha$ such that $\lambda \in M(\delta)$. So $\lambda \in M(\beta)$, for every $\alpha > \beta \geq \delta$. Since $\lambda \notin T(\alpha)$, it follows that $\lambda \notin T(\beta)$, for every $\alpha > \beta \geq \delta$. So $\lambda \in \bigcap_{\alpha>\beta \geq \delta } L(u_{\beta}) \subset L(u_{\alpha})$.
		
		We define the function $f: y^+  \to \ordinal $ such that $f(\alpha) = \otype(M(\alpha)) $, for every $ \alpha \leq y $. 
		We define $ g: y^+  \to \ordinal  $ such that $g(\alpha) = \otype(L(u_{\alpha}))$, for every $ \alpha \leq y $. 
		
		From the relation $L(u_{\alpha}) = M(\alpha) \setminus T(\alpha)$ and from the definition of $T(\alpha)$ it follows that for every $ \alpha \leq y $, $ f(\alpha) = \alpha +g(\alpha) $. Since $g(\alpha) > 0$, it follows that $f(\alpha) > \alpha$, for every $\alpha < y$. Since $L(u_y) = \emptyset$, it follows that $g(y) = 0$, so $f(y)=y$.
		
		From the hypothesis we have that:
		$$ f(0) = \otype(L(u)) = v.$$
		
		For all $\alpha<y$, since $ M(\alpha + 1) = M(\alpha) \cup L(u_{\alpha+1}) = M(\alpha)  \cup \{  u_{\alpha} \times  \lambda  \mid \lambda \in L(u_{\alpha})  \} $ and every element from $ \{  u_{\alpha} \times  \lambda  \mid \lambda \in L(u_{\alpha}) \} $ is greater than $u_{\alpha}$, which is strictly greater than any element in $ M(\alpha) $, it follows that:
		$$  f(\alpha+1) = f(\alpha) + g(\alpha), \text{ for every } \alpha < y . $$
		
		Let $\alpha \leq y $ be a limit ordinal. Since $M(\alpha) = \bigcup_{\beta<\alpha} M(\beta) $ and $M(\beta)$ is an initial segment of $M(\alpha)$, for every $\beta < \alpha$, it follows that:
		$$ f(\alpha) = \sup \{ f(\beta) \mid \beta < \alpha \}, \text{ for every limit ordinal } \alpha.  $$

		We treat two cases, depending on the size of the ordinal $v$. 
		
		\underline{Case 1:} If $1 \leq v < \omega$.
		
		From the recurrence relation for $f(\alpha+1)$, it follows that $f(n+1) = 2f(n)-n$, for every $n < \omega$. It follows inductively that $ f(n) = 2^n (v-1) +n +1 > n$, so $y>n$, for every $n \in \omega$. But $f(\omega )=\sup\{f(n) \mid n \in \omega \} = \omega$, so $y = \omega$. Thus $y$ is the smallest $\varepsilon $-number greater than $v$. 
		
		\underline{Case 2:} If $v \geq \omega $. 
		
		Let $\varepsilon$ be the smallest $\varepsilon $-number greater than $v$. 
		
		We prove by induction on $n < \omega$ that $f(n) = v \times 2^n$. We know that $f(0) = v$ and $f(n+1) = f(n) + g(n) = f(n) \times 2 = v \times 2^{n+1} $, because $f(n) = g(n) $ by Lemma~\ref{1+v=v}.
		
		It follows that $f(\omega) = v \times 2^{\omega} = v \times \omega \geq \omega \times \omega > \omega $, so $y > \omega$.
		 
		We prove that $\varepsilon \geq y $.

		We prove that $f(\alpha) \leq v \times 2^{\alpha} $, for every $\alpha \leq y$. If $\alpha < \omega$, we have proved that equality holds. If $\alpha = \beta + 1$, then $f(\alpha) = f(\beta) + g(\beta) \leq f(\beta) \times 2 \leq v \times 2^{\beta+1}$. If $\alpha$ is a limit ordinal, then $f(\alpha) = \sup\{ f(\beta) \mid \beta < \alpha \} \leq \sup \{ v \times 2^{\beta}  \mid \beta < \alpha  \} = v \times 2^{\alpha} $.
		
		Assume that $\varepsilon < y$. Then $f(\varepsilon) \leq v \times 2^{\varepsilon} =v \times \varepsilon =\varepsilon $, by Lemma~\ref{lema-epsilon}. We obtain a contradiction with $ f(\varepsilon) > \varepsilon $. Thus $\varepsilon \geq y $. 
		
		We prove by induction on $\beta$ that if $ \omega \leq \beta < \varepsilon  $, then $ \beta \leq y $ and $ f(\beta) \geq \beta + 2^{\beta} $.

		If $\beta = \omega$, then we know that $ y > \omega $ and $ f(\omega) = v \times \omega \geq \omega \times \omega > \omega \times 2 = \omega + 2^{\omega} $.
		
		If $\beta = \beta'+1$, then, by induction, $f(\beta') \geq \beta' + 2^{\beta'} > \beta' $, from which it follows that $ \beta'<y $, so $\beta \leq y$. We know that $f(\beta) = f(\beta') + g(\beta') \geq \beta' + 2^{\beta'} + 2^{\beta'}  = \beta' + 2^{\beta} = \beta' + (1+2^{\beta}) = \beta + 2^{\beta} $, by Lemma~\ref{1+v=v}, because $2^{\beta} \geq \omega$.
		
		If $ \beta $ is a limit ordinal, we know that $\delta \leq y$, for every $\delta < \beta$, so $ \beta \leq y $. We have that $ f(\beta ) = \sup \{ f(\delta) \mid \delta < \beta  \} \geq \sup \{ \delta + 2^{\delta} \mid \delta < \beta \} \geq 2^{\beta}  $. If $ \beta < 2^{\beta}$, then, from Lemma~\ref{ineg-ord-pt-claim}, it follows that $2^{\beta} = \beta + 2^{\beta}$. If $\beta = 2^{\beta} $, then $\beta$ is an $\varepsilon$-number strictly smaller than $\varepsilon$, so $\beta \leq v$. From the recurrence relations for $f$, it follows that $f$ is strictly increasing. Then $f(\alpha) \geq v + \alpha $, for every $ \alpha \leq y $. Thus $ f(\beta) \geq v + \beta \geq \beta + \beta = \beta +2^{\beta} $.
		
		Then, for every $\beta < \varepsilon$, we have $\beta \leq y$, and since $\varepsilon $ is a limit ordinal from Remark~\ref{obs-epsilon-nr}, it follows that $\varepsilon \leq y$.
		
		Thus $y = \varepsilon$ and the proof is complete.
	\end{proof}

	\subsection{The quadratic closure of the next field after a transcendental} \label{sec: 3.4}
	
	The results in this subsection, except for the proof of Proposition~\ref{epsilon^3}, were published by H. Lenstra in \cite{nim-mult}.
	
	In order to apply Theorem~\ref{claim-inch-patr} for the perfect closure of $\tau(\tau)$, we need to find $\otype (L(\tau^{\tau \times \omega}))$. We will find $L(\tau(\tau))$ and then prove that $L(\tau(\tau)) = L(\tau^{\tau \times \omega})$.
	
	\begin{proposition} \label{L-de-urm-corp}
		Let $t$ be an algebraically closed field ordinal. We denote $u = t(t) = t^t$ and $t=2^s$. Then:
		\begin{align*}
			L(u) = \{ t\boxedexp{2n+1} \otimes \lambda, \ \lambda \oslash (t \oplus \alpha)\boxedexp{2n+1} \mid  n \in \omega, \ \alpha, \lambda \in t, \ \lambda \text{ is a group}  \}.
		\end{align*}
		
		Moreover, $\otype(L(u))=s\times t$.
		
	\end{proposition}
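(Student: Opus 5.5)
We work in $u = t(t)=t^t$ and use partial fractions. Since $t$ is algebraically closed, every element of $u$ decomposes uniquely as a polynomial part $Q(t)$ plus a sum of terms $\mu \oslash (t\oplus\alpha)^{\boxedexp{k}}$ with $\mu,\alpha<t$ and $k\geq 1$. By Proposition~\ref{urm-corp-1} (through Proposition~\ref{urm-corp-dupa-inch-alg}), this decomposition is also how the ordinal $u$ is laid out. The ordinal $t^{k}\times\lambda$ corresponds to $t^{\boxedexp{k}}\otimes\lambda$. The ordinals $t^{\omega\times(1+\alpha)+k}\times \lambda$ (in the indexing of Proposition~\ref{urm-corp-1}) correspond to $\lambda\oslash(t\oplus\alpha)^{\boxedexp{k+1}}$. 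Each such ordinal is a power of $2$ exactly when $\lambda$ is a group.

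So the "leading monomial" of an element $\gamma\in u$ is its largest term in this basis, and $\gamma<\lambda$ for a basis monomial $\lambda$ means that $\gamma$ has only smaller monomials. This reduces Lemma~\ref{def-alternativa-Lu} to the following test: a group monomial $\lambda$ lies in $L(u)$ iff no $P(\beta)$, $\beta\in u$, has leading monomial $\lambda$.

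Next we compute the leading monomial of $P(\beta)=\beta^{\boxedexp{2}}\oplus\beta$. Squaring is additive and sends $\mu\otimes t^{\boxedexp{k}}$ to $\mu^{\boxedexp{2}}\otimes t^{\boxedexp{2k}}$, and $\mu\oslash(t\oplus\alpha)^{\boxedexp{k}}$ to $\mu^{\boxedexp{2}}\oslash(t\oplus\alpha^{\boxedexp{2}})^{\boxedexp{2k}}$. As $\alpha\mapsto\alpha^{\boxedexp{2}}$ is a bijection of $t$, we reparametrize the poles by $\alpha$.

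\emph{Achievability.} Every monomial with even exponent $2k$ is the leading term of $P(\beta)$ for $\beta$ the corresponding square-root monomial of exponent $k$: the added $\beta$ has exponent $k<2k$ in the same block (or is a lower block), hence is smaller. Constant terms ($k=0$) lie in $P[t]=t$. Every non-group monomial is excluded by Lemma~\ref{def-alternativa-Lu}.

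\emph{Obstruction.} For odd exponent $2n+1$ and group $\lambda$, we show that $P(\beta)$ never has leading monomial $t^{\boxedexp{2n+1}}\otimes\lambda$ or $\lambda\oslash(t\oplus\alpha)^{\boxedexp{2n+1}}$. Take the leading monomial $m$ of $\beta$ of exponent $k\geq1$. If $k\geq1$, then $m^{\boxedexp{2}}$ has exponent $2k$, which dominates everything. Its cancellation is impossible, because squaring is injective on monomials and $\beta$'s own terms are smaller. So the leading monomial of $P(\beta)$ has even exponent.

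The delicate point is the comparison between blocks: the ordering puts all polynomial monomials below pole monomials, and poles at different $\alpha$ are ordered by $\alpha$. So the leading term of $\beta^{\boxedexp{2}}$ sits in the block of $\alpha^{\boxedexp{2}}$, not of $\alpha$, and could be smaller than other terms of $\beta$. To handle this, we apply the argument blockwise. Suppose the target leading monomial lies in block $B$ with odd exponent. Then within block $B$, the components of $\beta$ and of $\beta^{\boxedexp{2}}$ must give an odd top exponent, which forces a top exponent $2n+1$ contributed by the $\beta$-term in block $B$. But then $\beta^{\boxedexp{2}}$ has exponent $4n+2$ in the block of the squared pole, and we must show this does not exceed the target. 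I expect this ordering comparison to be the main obstacle. I would first try to show that the component of $\beta$ in block $B$ can be modified by a square-free adjustment (replace $\beta$ by $\beta\oplus\gamma^{\boxedexp{2}}\oplus\gamma$-type corrections, which change $P(\beta)$ by $P$ of something) to strip higher even parts. The fallback is an induction on blocks ordered by $\alpha$, using that $\lambda$ is a group so that the lower-order coefficient corrections (from $\mu^{\boxedexp{2}}$ versus $\mu$ within $t$) stay below $\lambda$.

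Finally, the order type. In each block, the elements of $L(u)$ are indexed by pairs (odd exponent $n\in\omega$, group $\lambda<t$), ordered lexicographically with $n$ dominant. The groups below $t=2^s$ are $2^{\sigma}$ for $\sigma<s$, of order type $s$. So each block contributes $s\times\omega$. There is one polynomial block followed by $t$ pole blocks, giving $s\times\omega\times(1+t)$. Since $t$ is a limit $\geq\omega$, we have $1+t=t$ and $\omega\times t=t$ (as $t=\omega^{\delta}$ with $\delta$ limit, as in Proposition~\ref{urm-corp-dupa-inch-alg}). Hence $\otype(L(u))=s\times t$.
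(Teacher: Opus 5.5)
Your formula for squaring a pole term is wrong, and the ``delicate point'' you build on it does not exist. In characteristic $2$ one has $(t\oplus\alpha)\boxedexp{2}=t\boxedexp{2}\oplus\alpha\boxedexp{2}$, so
\[
\bigl(\mu\oslash(t\oplus\alpha)\boxedexp{k}\bigr)\boxedexp{2}=\mu\boxedexp{2}\oslash(t\oplus\alpha)\boxedexp{2k}.
\]
The pole stays at $\alpha$; it does not move to $\alpha\boxedexp{2}$.

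As written, you leave this non-issue open with only tentative repair strategies. So the proposal never actually proves that $\lambda\oslash(t\oplus\alpha)\boxedexp{2n+1}\in L(u)$. Your formula would also break your own achievability step: the correction term would sit in the block of the Nim square root of $\alpha$, which can lie above $\alpha$, and would then exceed the target.

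With the correct formula, squaring sends each basis element $t^{e}$ ($e<t$) to the basis element of the same block with doubled exponent. Hence squaring is strictly increasing on basis elements, and $m\boxedexp{2}>m$ for every basis element $m\neq 1$. Your first obstruction paragraph is then already a complete proof:
\begin{itemize}
\item If $\beta\notin t$ has leading term $c\otimes m$, then $P(\beta)$ has leading term $c\boxedexp{2}\otimes m\boxedexp{2}$, whose exponent in its block is even.
\item For a group $\Lambda=t^{e}\otimes\lambda$ and $\Lambda_1=(t^{e}\otimes z)\oplus v<\Lambda$ (with $z<\lambda$, $v<t^{e}$), the element $\Lambda\oplus\Lambda_1$ has leading basis element $t^{e}$, whose exponent is odd.
\item If $\beta\in t$, then $P(\beta)<t\leq\Lambda\oplus\Lambda_1$.
\end{itemize}
One small correction: the leading coefficient of $\Lambda\oplus\Lambda_1$ is $\lambda\oplus z$, not $\lambda$. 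So your test should refer to leading basis elements rather than to ``leading monomial $\lambda$''. This is harmless here, because $t$ is perfect and so leading coefficients of elements of $P[u]$ are unrestricted.

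Once this is fixed, your route agrees with the paper in two places. The inclusion $L(u)\subset A$ is the same (take square roots of even-exponent monomials). The order-type count $s\times\omega\times(1+t)=s\times t$ is also the same; the paper's map $\Phi$ encodes the same block decomposition.

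The routes differ in the obstruction. The paper clears denominators and works with polynomials:
\begin{itemize}
\item for $t\boxedexp{2n+1}\otimes\lambda$ it compares degree parities in $pR\boxedexp{2}=Q\boxedexp{2}\oplus QR$;
\item for $\lambda\oslash(t\oplus\alpha)\boxedexp{2n+1}$ it uses divisibility by powers of $X\oplus\alpha$.
\end{itemize}
In effect this is the parity of a valuation, at infinity or at $\alpha$. It is a local argument: it only needs the lower part to have pole order at most $2n$ at $\alpha$, and it never compares different blocks. Your corrected argument is global and uniform. It relies on the ordinal order of the partial-fraction basis given by Proposition~\ref{urm-corp-1}, and on Frobenius being monotone on that basis — exactly the fact your misstatement obscured.
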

	
	\begin{proof}
		We denote $A = \{ t\boxedexp{2n+1} \otimes \lambda, \ \lambda \oslash (t \oplus \alpha)\boxedexp{2n+1} \mid  n \in \omega, \ \alpha, \lambda \in t, \ \lambda \text{ is a group}  \}$. From the identities proved in Proposition~\ref{urm-corp-1}, it follows that $A = \{ t^{2n+1} \times \lambda, \  t^{\omega + \omega \times \alpha +2n}  \times \lambda \mid  n \in \omega, \ \alpha, \lambda \in t, \ \lambda \text{ is a group}  \}$.
		
		Let $\lambda \in L(u)$. From Lemma~\ref{def-alternativa-Lu}, it follows that $\lambda$ is a group, so $\lambda = 2^{\alpha}$, where $\alpha< s \times t$, $\alpha = s \times x + y$, with $x<t$, $y<s$. Then $\lambda = t^x \times 2^y$. Since $t = \omega + \omega \times t$, then $x=n<\omega$ or $x = \omega + \omega \times \beta +n$, where $\beta <t$, $n<\omega$. We see that $x$ cannot be zero, since, if that were so, then $\lambda = 2^y < t$, so $\lambda \in L(t) = \emptyset$, a contradiction.
		
		If $x=n<\omega$, assume that $n$ is even, so there is an $m \in \mathbb{N}^*$ with $n=2m$. Since $2^y <t$, there exists $\xi < t $ such that $2^y = \xi\boxedexp{2}$. 
		Then $P(t\boxedexp{m} \otimes \xi) = \lambda \oplus (t\boxedexp{m} \otimes \xi )$. But $t\boxedexp{m} \otimes \xi = t^m \times \xi < t^m \times t \leq t^n \leq \lambda$, a contradiction. Thus $n$ is odd and $\lambda \in A$.
		
		If $x = \omega + \omega \times \beta +n$, where $\beta <t$, $n<\omega$, assume that $n$ is odd, $n=2m-1$. From the  identities proved in Proposition~\ref{urm-corp-1}, it follows that $\lambda = 2^y \oslash (t \oplus \beta )\boxedexp{2m} $. Since $2^y <t$, there exists $\xi < t $ such that $2^y = \xi\boxedexp{2}$. Then $P(\xi \oslash (t \oplus \beta )\boxedexp{m} ) = \lambda \oplus (\xi \oslash (t \oplus \beta )\boxedexp{m} )$ and $\xi \oslash (t \oplus \beta )\boxedexp{m} = t^{\omega +\omega \times \beta + m-1} \times \xi < \lambda$, a contradiction. Thus $n$ is even and $\lambda \in A$.
		
		Hence $L(u) \subset A $.
		
		Let $n \in \omega , \ \lambda \in t $ be a group. From Lemma~\ref{lema-produs-grupuri} it follows that $ \eta = t^{2n+1} \times \lambda$ is a group. Let $ \lambda_1 < t^{2n+1} \times \lambda $, $\lambda_1 = t^{2n+1} \times z + v$, where $z<\lambda, \ v<t^{2n+1}$. Then, there exists $ S \in t[X] $ of degree strictly less than $2n+1$ with $v = S(t)$. Assume towards a contradiction that $ \eta \oplus \lambda_1 =P(\mu) $, for some $ \mu \in u $, $\mu = Q(t) \oslash R(t)$, where $Q, R \in t[X], \ R \neq 0$. We know that $ \eta \oplus \lambda_1  =  t\boxedexp{2n+1}\otimes(\lambda \oplus z) \oplus v $, and we denote $p(X) = X\boxedexp{2n+1}\otimes(\lambda \oplus z) \oplus S$. Clearly $p \in t[X], \ \deg(p) = 2n+1$. It follows that $ p(X) \otimes R(X)\boxedexp{2} =Q(X)\boxedexp{2} \oplus Q(X)\otimes R(X)$. We look at the degrees of the polynomials in this relation. If $\deg(Q)>\deg(R)$, then $ \deg(p) + 2\deg(R) = 2\deg(Q)$, a contradiction. If $\deg(Q) \leq \deg(R)$, then $ \deg(p) + 2\deg(R)\leq 2 \deg(R) $, a contradiction. Thus $t^{2n+1} \times \lambda \in L(u) $.

		Let $n \in \omega, \ \alpha, \lambda \in t$, where $\lambda$ is a group. From Lemma~\ref{lema-produs-grupuri} it follows that $ \eta =  t^{\omega + \omega \times \alpha +2n}  \times \lambda $ is a group. Let $ \lambda_1 < t^{\omega + \omega \times \alpha +2n} \times \lambda $, $\lambda_1 = t^{\omega + \omega \times \alpha +2n} \times z + v$, where $z<\lambda, \ v<t^{\omega + \omega \times \alpha +2n} $. Assume towards a contradiction that $ \eta \oplus \lambda_1 =P(\mu) $, for some $ \mu \in u $, $\mu = Q(t) \oslash R(t)$, where $Q, R \in t[X], \ R \neq 0$, and $Q, \ R$ are coprime. We know that $ \eta \oplus \lambda_1  =  (\lambda \oplus z) \oslash (t \oplus \alpha )\boxedexp{2n+1}  \oplus v $, and we denote $x = \lambda \oplus z \neq 0$. From the characterization of $t^{\omega + \omega \times \alpha +2n}$ in Proposition~\ref{urm-corp-1}, we have that $ x \oplus v \otimes (t \oplus \alpha )\boxedexp{2n+1} = p(t) \oslash (\bigotimes_{i=1}^r(t \oplus \beta_i)\boxedexp{m_i}) $, where $p \in t[X]$, $p(\alpha) \neq 0$, $r \in \omega, \ \beta_i < \alpha, \ m_i \in \omega$, for every $ i \in \{1, \dots , r\}$. Then $p(X) \otimes R(X)\boxedexp{2} = (Q(X)\boxedexp{2} \oplus Q(X)\otimes R(X))\otimes (X \oplus \alpha )\boxedexp{2n+1} \otimes (\bigotimes_{i=1}^r(X \oplus \beta_i)\boxedexp{m_i}) $. It follows that $(X \oplus \alpha)\boxedexp{n+1} \mid R$, so $(X \oplus \alpha ) \mid Q\boxedexp{2} $, which contradicts the fact that $Q$ and $R$ are coprime. Thus $t^{\omega + \omega \times \alpha +2n} \times \lambda \in L(u)$.
		
		Hence $A = L(u)$.
		
		We define $\Phi : s \times t \to L(u)$ such that for every $n \in \omega , \ r<s, \ \beta<t $:
		$$ \Phi(s \times n + r) = t^{2n+1} \times 2^r; $$
		$$ \Phi (s \times (\omega + \omega \times \beta +n) + r) = t^{\omega + \omega \times \beta +2n} \times 2^r. $$
		
		Then $\Phi$ is well-defined, bijective, and order-preserving, so $\otype(L(u))=s\times t$.
	\end{proof}
	
	\begin{proposition} \label{L-pur-insep}
		Let $u' \hookrightarrow u $ be an algebraic and purely inseparable field extension. Then $L(u)=L(u')$.
	\end{proposition}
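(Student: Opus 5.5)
The plan is to use the characterization of Lemma~\ref{def-alternativa-Lu}: $\lambda \in L(u)$ if and only if $\lambda \in u\setminus\{0\}$ is a group and there are no $\beta \in u$ and $\lambda_1<\lambda$ with $P(\beta)=\lambda\oplus\lambda_1$. Since $u'\subset u$ are both ordinals here, $u'$ is an initial segment of $u$. The proof then splits into two claims:
\begin{enumerate}[(i)]
\item $L(u)\subset u'$;
\item for $\lambda\in u'$, we have $\lambda\in L(u)$ if and only if $\lambda\in L(u')$.
\end{enumerate}

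\textbf{Step 1 (a separability observation).} I first show that $P[u]\cap u' = P[u']$. Let $\beta\in u$ with $P(\beta)=x\in u'$. Then $\beta$ is a root of $X\boxedexp{2}\oplus X\oplus x\in u'[X]$. This polynomial has derivative $1$, so it is separable, and hence the minimal polynomial of $\beta$ over $u'$ is separable. On the other hand, $\beta$ is purely inseparable over $u'$. By Proposition~\ref{pur-insep}, its minimal polynomial is $(X\oplus\beta)^{2^n}$, and separability forces $n=0$. Hence $\beta\in u'$.

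\textbf{Step 2 (claim (ii)).} Let $\lambda\in u'$ and $\lambda_1<\lambda$. Then $\lambda_1\in u'$, and since $u'$ is a group, $\lambda\oplus\lambda_1\in u'$. By Step 1, $\lambda\oplus\lambda_1\in P[u]$ if and only if $\lambda\oplus\lambda_1\in P[u']$. Being a group does not depend on the ambient field. So the condition in Lemma~\ref{def-alternativa-Lu} is the same for $u$ and for $u'$, which gives $L(u)\cap u' = L(u')$.

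\textbf{Step 3 (claim (i)), the main point.} Let $\lambda\in u\setminus u'$, so $\lambda\geq u'$. By Proposition~\ref{pur-insep}, there is $n\geq 1$ with $\lambda\boxedexp{2^n}\in u'$; here $n\geq1$ because $\lambda\notin u'$. Put
\[
\beta=\lambda\oplus\lambda\boxedexp{2}\oplus\dots\oplus\lambda\boxedexp{2^{n-1}}\in u .
\]
In characteristic $2$, squaring is additive, so the sum telescopes:
\[
P(\beta)=\lambda\boxedexp{2}\oplus\dots\oplus\lambda\boxedexp{2^{n}}\oplus\lambda\oplus\dots\oplus\lambda\boxedexp{2^{n-1}}=\lambda\oplus\lambda\boxedexp{2^n}.
\]
Thus $\lambda\oplus P(\beta)=\lambda\boxedexp{2^n}$. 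This lies in $u'$, so it is strictly smaller than $\lambda$. It is therefore a one-term Nim sum of an ordinal smaller than $\lambda$ (or the empty sum, if it equals $0$). By the original definition of $L(u)$, this shows $\lambda\notin L(u)$. Hence $L(u)\subset u'$, and combining with Step 2 gives $L(u)=L(u')$.

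I expect Step 3 to be the step needing care: it requires the telescoping choice of $\beta$, and it uses that every element of $u'$ is below every element of $u\setminus u'$. Steps 1 and 2 are routine once we note that Artin–Schreier polynomials are separable.
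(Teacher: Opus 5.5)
Your proof is correct and follows essentially the same route as the paper. Separability of $X\boxedexp{2}\oplus X\oplus c$ combined with Proposition~\ref{pur-insep} gives $P[u]\cap u'=P[u']$, which is the paper's inclusion $L(u')\subset L(u)$, and the telescoping identity $P(\lambda\oplus\lambda\boxedexp{2}\oplus\dots\oplus\lambda\boxedexp{2^{n-1}})=\lambda\oplus\lambda\boxedexp{2^n}$ excludes every $\lambda\geq u'$ from $L(u)$. Your split into $L(u)\cap u'=L(u')$ and $L(u)\subset u'$ is just a tidier arrangement of the paper's two inclusions, and you spell out the separability step more explicitly than the paper does.
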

	
	\begin{proof}
		We prove that $L(u') \subset L(u)$. Let $\lambda \in L(u')$. From Lemma~\ref{def-alternativa-Lu}, it follows that $\lambda$ is a group. Assume towards a contradiction that there exist $x \in  u, \ s \in \lambda $ such that $x\boxedexp{2}  \oplus x = \lambda \oplus s $. Then the minimal polynomial of $x$ over $u'$ divides the polynomial $X\boxedexp{2} \oplus X \oplus \lambda \oplus s $. From Proposition~\ref{pur-insep}, it follows that the minimal polynomial of $x$ over $u'$ cannot be the polynomial $X\boxedexp{2} \oplus X \oplus \lambda \oplus s $, therefore $x \in u'$, a contradiction with $ \lambda \in L(u')$.
		
		We prove that $L(u) \subset L(u')$. Let $\lambda \in L(u)$. From Lemma~\ref{def-alternativa-Lu}, it follows that $\lambda$ is a group. Assume towards a contradiction that $\lambda \notin L(u')$. Then $\lambda \geq u'$. From Proposition~\ref{pur-insep}, it follows that there exists $n \in \omega $ such that $ \lambda \boxedexp{2^n} \in u'$. Then $ \lambda \oplus \lambda \boxedexp{2^n} = (\lambda \oplus \lambda \boxedexp{2}) \oplus \dots \oplus (\lambda \boxedexp{2^{n-1}} \oplus \lambda \boxedexp{2^n})$. So $\lambda \oplus \lambda \boxedexp{2^n} \in P[u]$, and $\lambda \boxedexp{2^n} < u' < \lambda$, a contradiction. 
	\end{proof}
	
	\begin{remark} \label{obs-egal-L}
		From Proposition~\ref{inch-perfecta-ord-corp-alg-inchis}, Proposition~\ref{corolar-pur-insep}, and Proposition~\ref{L-pur-insep} it follows that for a transcendental ordinal $t$, we have that $L(t^t)=L(t^{t \times \omega })$.
	\end{remark}
	
	\begin{theorem} \label{inch-patratica-transcendent}
		Let $t$ be a transcendental ordinal. We define the sequence $(a_{n})_{n \in \omega} $ such that $ a_0 =t $, $a_{n+1}=t^{a_n}$, for every $n \in \omega$. Then the quadratic closure of $t(t)$ is $\sup \{ a_n \mid n \in \omega \}$.
	\end{theorem}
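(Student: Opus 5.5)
The plan is to apply Theorem~\ref{claim-inch-patr} to the perfect field ordinal $u := t^{t \times \omega}$. By Proposition~\ref{inch-perfecta-ord-corp-alg-inchis}, $u$ is the perfect closure of $t(t)=t^t$. Any quadratically closed field is perfect, so the quadratic closure of $t(t)$ contains $u$. Conversely, the quadratic closure of $u$ is quadratically closed and contains $t(t)$. Hence both closures coincide, and it suffices to compute the quadratic closure of $u$.

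By Remark~\ref{obs-egal-L}, $L(u)=L(t^t)$. By Proposition~\ref{L-de-urm-corp}, writing $t=2^s$, we get $v:=\otype(L(u))=s\times t$, which is nonzero. Theorem~\ref{claim-inch-patr} then gives that the quadratic closure equals $u^{\varepsilon}=t^{t\times\omega\times\varepsilon}$, where $\varepsilon$ is the least $\varepsilon$-number strictly greater than $s\times t$. The remaining work is pure ordinal arithmetic: identify $\varepsilon$ and simplify $t^{t\times\omega\times\varepsilon}$ to $\sup_n a_n$.

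Put $E:=\sup\{a_n \mid n\in\omega\}$. First I will show that $E$ is an $\varepsilon$-number: $\omega^E=E$, hence $2^E=E$. Since $t$ is transcendental, $t\ge\tau$, and by Proposition~\ref{corpuri} we have $t=\tau^\gamma=\omega^{\delta}$ with $\delta=\omega^\omega\times\gamma$. So each $a_{n+1}=t^{a_n}=\omega^{\delta\times a_n}$. Then $\omega^E=\sup_n\omega^{a_n}$, and I claim $\omega^{a_n}\le a_{n+1}$ (clear, since $\delta\ge1$) and $a_{n+1}\le\omega^{a_{n+1}}$. Together these give $\omega^E=E$, following the pattern of Proposition~\ref{epsilon-0}.

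Next I will show that $E$ is the least $\varepsilon$-number greater than $s\times t$. We have $s\le t$ since $t=2^s$, and $t$ is not itself an $\varepsilon$-number in general. We also have $s\times t\le t\times t\le t^t=a_1<E$. For any $\varepsilon$-number $\eta>s\times t\ge t$, Proposition~\ref{prop-eps-nr-indice-leg} gives $\omega^\eta=\eta$, and then $t^\eta=\omega^{\delta\times\eta}=\omega^{\eta}=\eta$, using $\delta\times\eta=\eta$ from Lemma~\ref{lema-epsilon} (note $\delta<t\le\eta$). By induction $a_n\le\eta$ for all $n$, so $E\le\eta$. This parallels Proposition~\ref{epsilon-0-tau}.

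Finally I compute $u^E=t^{t\times\omega\times E}$. Since $t\times\omega<t^t<E$, Lemma~\ref{lema-epsilon} gives $t\times\omega\times E=E$. So $u^E=t^E=\omega^{\delta\times E}=\omega^E=E$, where $\delta\times E=E$ again holds because $\delta<E$.

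The main obstacle is bookkeeping rather than ideas. It has three parts:
\begin{itemize}
\item justifying that the quadratic closure of $t(t)$ equals that of its perfect closure (the closures live inside the fixed Field $\ordinali$);
\item checking the inequality $v=s\times t<E$;
\item carefully applying the absorption Lemma~\ref{lema-epsilon}, which requires the factor to be strictly below the $\varepsilon$-number.
\end{itemize}
I would first verify that $s\times t<t^t$ using $s<t$, which holds because $t$ is a limit, and that $t\times\omega<t^2$.
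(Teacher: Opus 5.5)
Your proposal is correct and follows the paper's route: pass to the perfect closure $t^{t\times\omega}$, get $\otype(L)=s\times t$ from Remark~\ref{obs-egal-L} and Proposition~\ref{L-de-urm-corp}, apply Theorem~\ref{claim-inch-patr}, and identify the resulting $\varepsilon$ with $\sup_n a_n$ by ordinal arithmetic (you compute in base $\omega$ via $t=\omega^\delta$, while the paper uses $s\times a_{n+1}=a_{n+1}$, which is only a cosmetic difference). One small slip: $\delta<t$ and $s<t$ can fail (e.g.\ $t=\omega_1$ is transcendental and an $\varepsilon$-number, so $s=\delta=t$); this is harmless, since you only need $\delta\le t<\eta$ and $s\times t\le t\times t<t^t$, and both hold regardless.
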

	
	\begin{proof}
		From Proposition~\ref{inch-perfecta-ord-corp-alg-inchis} it follows that $t^{t \times \omega}$ is the perfect closure of $t(t)$, so the quadratic closure of $t(t)$ is the quadratic closure of $t^{t \times \omega}$. From Remark~\ref{obs-egal-L} and Proposition~\ref{L-de-urm-corp}, we have that $ \otype(L(t^{t\times \omega}))=s \times t $, where $2^s =t$. From Theorem~\ref{claim-inch-patr}, it follows that the quadratic closure of $t^{t \times \omega}$ is $ (t^{t \times \omega}) ^{\varepsilon} $, where $\varepsilon $ is the smallest $\varepsilon$-number strictly greater than $s\times t$.
		
		Since $t>\omega$, it follows that $\varepsilon > \omega$. From Lemma~\ref{lema-epsilon} it follows that $ \omega \times \varepsilon = t \times \varepsilon = \varepsilon $, so $t^{t \times \omega \times \varepsilon} = t^{\varepsilon}$. 
		
		We observe that $a_{n+1} \leq s \times a_{n+1}= s \times t^{a_n} \leq t \times t^{a_n} = t^{1+a_n} = t^{a_n} = a_{n+1}  $, from Lemma~\ref{1+v=v}. So $a_{n+1} = s \times a_{n+1}$, for every $n \in \omega$.
		
		We prove that $ \varepsilon \geq  s \times  a_n $, for every $n \in \omega$. Clearly $ \varepsilon \geq s \times a_0 $.
		
		If $ \varepsilon \geq s \times a_n $, it follows that $ \varepsilon = 2^{\varepsilon} \geq 2^{s \times a_n} = a_{n+1} $. So $ \varepsilon \geq s \times a_{n+1} $. 
		
		Then $ \varepsilon \geq \sup \{ a_n \mid n \in \omega \} $. But $2^{\sup \{ a_n \mid n \in \omega \}} = \sup \{ 2^{a_{n+1}} \mid n \in \omega \} = \sup \{ 2^{s \times a_{n+1}} \mid n \in \omega \} = \sup \{ a_{n+2} \mid n \in \omega \} = {\sup \{ a_n \mid n \in \omega \}} $, because the sequence $(a_n)_n$ is increasing. Thus $ \varepsilon = \sup \{ a_n \mid n \in \omega \} $ and $ t^{\varepsilon} = \sup \{ a_n \mid n \in \omega \} $, which was to be proven.
	\end{proof}
	
	\begin{remark} \label{obs-tau-inch-patr}
			From Theorem~\ref{inch-patratica-transcendent}, Proposition~\ref{epsilon-0-tau}, and Proposition~\ref{epsilon-0} it follows that the quadratic closure of $ \tau(\tau) $ is $\varepsilon_0$.
	\end{remark}

	\begin{proposition} \label{epsilon^3}
		Let $t$ be a transcendental ordinal. Let $ \theta $ be the quadratic closure of $t(t)$. Then $\theta \boxedexp{3} = t $.
	\end{proposition}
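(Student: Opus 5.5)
The plan is to identify $\theta\boxedexp{3}$ through Theorem~\ref{set} g). By Theorem~\ref{inch-patratica-transcendent}, $\theta$ is a quadratically closed field ordinal. It is not algebraically closed; this follows from the irreducibility argument below. So Theorem~\ref{set} g) says that $\theta$ is a Nim root of the lexicographically least polynomial over $\theta$ with no Nim root in $\theta$. I will show that this polynomial is $X\boxedexp{3} \oplus t$. Then $\theta\boxedexp{3} \oplus t = 0$, i.e.\ $\theta\boxedexp{3} = t$.

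\emph{Minimality.} Polynomials are compared first by degree and then by coefficients from the top down, as in \cite{siegel}. Every polynomial of degree at most $2$ over $\theta$ has a root in $\theta$, because $\theta$ is $2$-closed. A monic cubic lexicographically below $X\boxedexp{3}\oplus t$ has the form $X\boxedexp{3}\oplus c$ with $c<t$. Such a cubic has a root in $t \subset \theta$, since $t$ is algebraically closed. So every polynomial below $X\boxedexp{3}\oplus t$ has a root in $\theta$, and it remains to show that $X\boxedexp{3}\oplus t$ has no root in $\theta$.

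\emph{Irreducibility over $t(t)$.} Suppose $(p(t)\oslash q(t))\boxedexp{3} = t$ with $p,q\in t[X]$ and $q\neq 0$. Since $t$ is transcendental over $t$, this gives $p(X)^3 = X\,q(X)^3$ in $t[X]$. The left side has degree divisible by $3$ and the right side has degree $\equiv 1 \pmod 3$, a contradiction. Hence $X\boxedexp{3}\oplus t$ is irreducible over $t(t)$, and any root generates an extension of $t(t)$ of degree $3$.

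\emph{Elements of $\theta$ have $2$-power degree.} This is the step needing most care. I will show that every $x\in\theta$ lies in a finite extension $K\supseteq t(t)$ with $[K:t(t)]$ a power of $2$. Then a cube root of $t$ in $\theta$ would give $3 \mid [K:t(t)]$, a contradiction.

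By Proposition~\ref{inch-perfecta-ord-corp-alg-inchis} and Theorem~\ref{claim-inch-patr}, we have $\theta = u_\varepsilon$ with $u_0 = u = t^{t\times\omega}$. The argument is by induction along the chain $u_\alpha$.
\begin{enumerate}[(i)]
\item For $x\in u_0$: by Proposition~\ref{corolar-pur-insep}, $x^{\boxedexp{2^n}}\in t(t)$ for some $n$. Hence $[t(t)(x):t(t)]$ divides $2^n$.
\item For limit $\alpha$: $u_\alpha$ is a union of earlier $u_\beta$, so there is nothing to prove.
\item For $x\in u_{\alpha+1}$ with $\alpha<\varepsilon$: we have $u_{\alpha+1} = u_\alpha(u_\alpha)$, and $u_\alpha$ is a root of $X\boxedexp{2}\oplus X\oplus\lambda_\alpha$ with $\lambda_\alpha\in u_\alpha$ (Lemma~\ref{lema-u-x-star}). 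Write $x = a\oplus b\otimes u_\alpha$ with $a,b<u_\alpha$. By induction, $a$, $b$ and $\lambda_\alpha$ lie in $2$-power-degree finite extensions $K_a$, $K_b$, $K_\lambda$ of $t(t)$ inside $\ordinali$.
\end{enumerate}
In step (iii), the compositum $K_aK_bK_\lambda$ need not have $2$-power degree in general. To avoid this, I will strengthen the induction hypothesis: each element lies in a field obtained from $t(t)$ by a finite tower of extensions, each of degree at most $2$. Compositums of such towers are again such towers, since adjoining each step to a larger field still has degree at most $2$. Adjoining $u_\alpha$ is one more degree-$\le 2$ step. The base case (i) fits this form, because $t(t)(x)$ is reached by successive square roots. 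Such a tower has degree a power of $2$, which completes the argument.
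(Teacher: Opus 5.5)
Your proof is correct. The minimality part and the degree argument over $t(t)$ coincide with the paper's; the difference is in how you exclude a cube root of $t$ from $\theta$.

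The paper pushes the statement ``$X\boxedexp{3}\oplus t$ has no root in the current field'' along the chain itself. It first passes through the fields $t^{t\times 2^n}$ of Proposition~\ref{inch-perfecta-ord-corp-alg-inchis}, then through the fields $u_\alpha$ of Theorem~\ref{claim-inch-patr}. At each successor step it uses that a rootless cubic is irreducible, so a root in a quadratic extension would span a degree-$3$ subextension of a degree-$2$ extension; this is the argument later isolated as Lemma~\ref{pol-grad3-ext-grad2}. Limit stages are unions.

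You instead prove a structural fact about elements: every $x\in\theta$ lies in a finite tower of degree-$\le 2$ steps over $t(t)$. You then apply the tower law once. This forces you to deal with composita. You handle that correctly by carrying towers, rather than just ``$2$-power degree'', through the induction, since composita of $2$-power-degree extensions need not have $2$-power degree. You also replace the explicit quadratic steps $t^{t\times 2^n}\subset t^{t\times 2^{n+1}}$ by pure inseparability in the base case.

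The paper's stagewise argument is lighter: it only uses that consecutive stages are quadratic and never forms composita. Yours yields more: every element of $\theta$ has $2$-power degree over $t(t)$. Hence $\theta$ contains no root of any irreducible polynomial over $t(t)$ whose degree is not a power of $2$, not just of $X\boxedexp{3}\oplus t$.
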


	\begin{proof}
		From Theorem~\ref{set} g), it is enough to prove that the smallest irreducible polynomial over $ \theta $ is $ X\boxedexp{3} \oplus t $. Since $\theta $ is quadratically closed and $t$ is algebraically closed, it follows that any polynomial smaller than $ X\boxedexp{3} \oplus t $ is reducible. Being of degree 3, in order to prove that $ X\boxedexp{3} \oplus t $ is irreducible it is enough to prove that it has no roots in $ \theta $.
		
		Assume towards a contradiction that there exists $x \in \theta $ such that $x\boxedexp{3} = t$.
		
		From Proposition~\ref{inch-perfecta-ord-corp-alg-inchis} it follows that $t^{t \times \omega }$ is the perfect closure of $t(t)$. Let $\varepsilon$ be the smallest $\varepsilon$-number strictly greater than $\otype(L(t^{t \times \omega}))$. Consider the sequence of fields as in  Theorem~\ref{claim-inch-patr}: $(u_{\alpha})_{\alpha \leq \varepsilon }$ such that $u_0 = t^{t \times \omega}$, $ u_{\alpha + 1} = u_{\alpha}(u_{\alpha}) $, and $ u_{\beta} = \sup \{ u_{\delta} \mid \delta < \beta \} $ if $\beta$ is a limit ordinal. Then it follows that $u_{\alpha + 1} = u_{\alpha}^2$ for every $\alpha < \varepsilon $. By induction it follows immediately that $ u_{\alpha} = u_0^{ 2^{\alpha} } $ for every $\alpha \leq \varepsilon $. Thus $u_{\varepsilon} = \theta$.
		
		We prove by induction that $ x \notin u_{\alpha} $ for every $\alpha \leq \varepsilon $.
		
		To prove that $x \notin u_0$, we prove that $x \notin t^{t \times 2^n} $ for every $n \in \omega$.
		
		If $x \in t(t) $, then $x = Q(t) \oslash R(t)$, where $Q, R \in t[X]$ are coprime and $R \neq 0 $. Then $ R(t)\boxedexp{3}\otimes t = Q(t)\boxedexp{3} $, so $ R(X)\boxedexp{3}\otimes X = Q(X)\boxedexp{3}  $, a contradiction because the degree of the polynomial on the left-hand side is not divisible by 3, while that of the polynomial on the right-hand side is. 
		
		If for some $n \in \omega $, $x \in t^{t \times 2^{n+1}} $, but $x \notin t^{t \times 2^{n}} $, then since the roots of $ X\boxedexp{3} \oplus t $ are $x$, $2 \otimes x$, $3 \otimes x$ and $ x \notin t^{t \times 2^{n}} $ it follows that $ X\boxedexp{3} \oplus t $ has no roots in $ t^{t \times 2^{n}} $ and being of degree 3, it follows that it is irreducible over $ t^{t \times 2^{n}} $. Then $[t^{t \times 2^{n}}(x): t^{t \times 2^{n}}] = 3$. But $t^{t \times 2^{n}} \subset t^{t \times 2^{n}}(x) \subset t^{t \times 2^{n+1}} $ and $[t^{t \times 2^{n+1}} : t^{t \times 2^{n}}] =2$ from Proposition~\ref{inch-perfecta-ord-corp-alg-inchis}, a contradiction.

		If $ \alpha = \beta +1 $, assume towards a contradiction that $ x \in u_{\alpha} $. Since the roots of $ X\boxedexp{3} \oplus t $ are $x$, $2 \otimes x$, $3 \otimes x$ and $ x \notin u_{\beta} $ it follows that $ X\boxedexp{3} \oplus t $ has no roots in $ u_{\beta} $ and being of degree 3, it follows that it is irreducible over $ u_{\beta} $. Then $[u_{\beta}(x): u_{\beta}] = 3$. But $u_{\beta} \subset u_{\beta}(x) \subset u_{\alpha} $ and $[u_{\alpha} : u_{\beta}] =2$, a contradiction.
		
		If $\alpha$ is a limit ordinal, since $u_{\alpha} = \sup \{  u_{\beta} \mid \beta < \alpha  \} $ and $ x \notin u_{\beta} $ for every $\beta < \alpha$ it follows that $ x \notin u_{\alpha} $.
		
		Hence $x \notin \theta $, a contradiction.
	\end{proof}
	
	\begin{remark} \label{obs-tau-radical-de-ordin-3}
		From Proposition~\ref{epsilon^3}, it follows that $\varepsilon_0\boxedexp{3}= \tau$.
	\end{remark}

	\section{New quadratically closed field ordinals} \label{sec: 4}
	
	We change H. Lenstra's definition for $P$ and generalize his definition for $L$. We will then use the new $P$ and $L$ to prove that $\{\varepsilon_\alpha \mid \alpha \leq \omega^{\omega^\omega} \}$ are the next quadratically closed field ordinals.

	\begin{definition}
		For any $ \alpha < \tau $, we define $ P_{\alpha}(X) = (\tau \oplus \alpha) \otimes X \boxedexp{4} \oplus X $. For an ordinal $u$, we denote $ P_{\alpha} [u] = \{ P_{\alpha}(x) \mid x \in u \} $.
	\end{definition}
	
	\begin{remark}
		For any $ \alpha < \tau $ and any ordinals $a$ and $b$, we have that $P_{\alpha}(a) \oplus P_{\alpha}(b) = P_{\alpha}(a \oplus b)$.
	\end{remark}

	\begin{definition}
		For any $ \alpha < \tau $ and any nonzero ordinal $u$, we define:
		\begin{align*}
			L_{\alpha}(u) &= \{ \lambda \in u \setminus \{0\} \mid \text{ there is no } \beta \in u \text{ such that } \lambda \oplus P_{\alpha}(\beta) \\ &  \text{ can be written as a finite Nim-sum of ordinals smaller than } \lambda \}.
		\end{align*}
	\end{definition}
	
	Some properties of $L$ remain the same for $L_{\alpha}$, such as the next three propositions.
	
	\begin{proposition}
		Let $ \alpha < \tau $ and $u$ be an ordinal. Then:
		\begin{align*}
			L_{\alpha}(u) = \{ \lambda \in u\setminus \{0\}  \mid \lambda \text{ is a group and there are no } \beta \in u, \ \lambda_1 < \lambda \text{ such that }  P_{\alpha}(\beta) = \lambda \oplus \lambda_1 \}.
		\end{align*} 
	\end{proposition}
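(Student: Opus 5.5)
We mirror the proof of Lemma~\ref{def-alternativa-Lu}, replacing $P$ by $P_{\alpha}$ throughout. The only property of $P$ used there was additivity, which for $P_{\alpha}$ is the preceding remark ($P_{\alpha}(a)\oplus P_{\alpha}(b)=P_{\alpha}(a\oplus b)$), together with $P_{\alpha}(0)=0$. Let $A$ denote the right-hand side of the statement. We prove the two inclusions separately.

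For $L_{\alpha}(u)\subset A$, let $\lambda\in L_{\alpha}(u)$; by definition $\lambda\neq 0$.
\begin{enumerate}[a)]
\item Suppose $\lambda$ is not a group. By Theorem~\ref{set} a), $\lambda=\lambda'\oplus\lambda''$ for some $\lambda',\lambda''<\lambda$. Taking $\beta=0\in u$ gives $\lambda\oplus P_{\alpha}(0)=\lambda'\oplus\lambda''$, a finite Nim-sum of ordinals smaller than $\lambda$. This contradicts $\lambda\in L_{\alpha}(u)$.
\item Suppose there are $\beta\in u$ and $\lambda_1<\lambda$ with $P_{\alpha}(\beta)=\lambda\oplus\lambda_1$. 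Then $\lambda\oplus P_{\alpha}(\beta)=\lambda_1$, a one-term Nim-sum of ordinals smaller than $\lambda$. This again contradicts $\lambda\in L_{\alpha}(u)$.
\end{enumerate}
Hence $\lambda\in A$.

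For $A\subset L_{\alpha}(u)$, let $\lambda\in A$. Suppose, towards a contradiction, that there are $\beta\in u$ and $\lambda_1,\dots,\lambda_n<\lambda$ with $\lambda\oplus P_{\alpha}(\beta)=\lambda_1\oplus\dots\oplus\lambda_n$. Since $\lambda$ is a group (and $\lambda>0$), $\lambda$ is closed under $\oplus$. Therefore $\mu:=\lambda_1\oplus\dots\oplus\lambda_n<\lambda$, and we get $P_{\alpha}(\beta)=\lambda\oplus\mu$ with $\mu<\lambda$. This contradicts $\lambda\in A$, so $\lambda\in L_{\alpha}(u)$.

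There is no real obstacle here. The one point to state explicitly is that $P_{\alpha}(0)=0$, which is what makes $\beta=0$ usable in step a). This holds because $(\tau\oplus\alpha)\otimes 0\oplus 0=0$.
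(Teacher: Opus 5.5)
Your proof is correct and is essentially the paper's argument: in both, the only substantive step is applying Theorem~\ref{set} a) with $\beta=0$ (using $P_{\alpha}(0)=0$) to show every $\lambda\in L_{\alpha}(u)$ is a group, and the remaining inclusions are the direct unwinding that the paper calls obvious. As a minor remark, your opening claim that the argument relies on additivity of $P_{\alpha}$ is inaccurate, since only $P_{\alpha}(0)=0$ is used.
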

	
	\begin{proof}
		Let $A=  \{ \lambda \in u\setminus \{0\}  \mid \lambda \text{ is a group and there are no } \beta \in u, \ \lambda_1 < \lambda \text{ such that }  P_{\alpha}(\beta) = \lambda \oplus \lambda_1 \} $.
		It is obvious that $ A \subset L_{\alpha}(u) $. 
		
		Let $\lambda \in L_{\alpha}(u)$. If $\lambda $ is not a group, then from Theorem~\ref{set} a), there exist $a,b < \lambda$ such that $a \oplus b = \lambda $. For $\beta = 0$, we have that $\lambda \oplus P_{\alpha}(\beta) = a \oplus b$, a contradiction. So $\lambda$ is a group and then $\lambda \in A  $ follows easily.
	\end{proof}

	\begin{proposition} \label{prop-L-general}
		Let $ \alpha < \tau $. Let $u$ and $u'$ be field ordinals such that $\tau(\tau) \hookrightarrow u' \hookrightarrow u$ are algebraic field extensions and $u' \hookrightarrow u$ is purely inseparable. Then $L_{\alpha}(u') = L_{\alpha}(u)$.
	\end{proposition}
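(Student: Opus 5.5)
We follow the structure of the proof of Proposition~\ref{L-pur-insep}, replacing $P$ by $P_{\alpha}$. We use the alternative description of $L_{\alpha}$ from the preceding proposition: $\lambda \in L_{\alpha}(w)$ iff $\lambda$ is a nonzero group in $w$ and no $\beta \in w$, $\lambda_1<\lambda$ satisfy $P_{\alpha}(\beta)=\lambda\oplus\lambda_1$. We prove the two inclusions separately. Since $\tau(\tau)\subseteq u'$, the coefficient $\tau\oplus\alpha$ lies in $u'$, so $P_{\alpha}$ is a polynomial over $u'$.

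\emph{Inclusion $L_{\alpha}(u')\subset L_{\alpha}(u)$.} Let $\lambda\in L_{\alpha}(u')$ and suppose $x\in u$, $s<\lambda$ satisfy $P_{\alpha}(x)=\lambda\oplus s$. Then $x$ is a root of $g(X)=(\tau\oplus\alpha)\otimes X\boxedexp{4}\oplus X\oplus \lambda\oplus s\in u'[X]$. The minimal polynomial of $x$ over $u'$ divides $g$. By Proposition~\ref{pur-insep} it has the form $(X-x)^{2^n}$.

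Now $g'=1$, so $g$ is separable and has no repeated roots. Hence $2^n=1$, that is, $x\in u'$. This contradicts $\lambda\in L_{\alpha}(u')$. This is the only place where the linear term of $P_{\alpha}$ matters, and it plays the same role as in the quadratic case.

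\emph{Inclusion $L_{\alpha}(u)\subset L_{\alpha}(u')$.} Let $\lambda\in L_{\alpha}(u)$ and assume $\lambda\notin L_{\alpha}(u')$. Since the defining condition only becomes weaker when passing from $u$ to $u'$, this forces $\lambda\geq u'$. By Proposition~\ref{pur-insep} there is $n$ with $\lambda\boxedexp{2^n}\in u'$. We want some $\beta\in u$ with $P_{\alpha}(\beta)=\lambda\oplus\mu$ for an element $\mu\in u'$; then $\mu<u'\le\lambda$ gives the contradiction.

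Here lies the main obstacle. The old trick of telescoping $\lambda\oplus\lambda\boxedexp{2^n}$ through $P(x)=x\boxedexp{2}\oplus x$ does not apply directly, because $P_{\alpha}$ has the twisted form $cX\boxedexp{4}\oplus X$ with $c=\tau\oplus\alpha$. Note that $\beta\mapsto c\otimes\beta\boxedexp{4}$ is an injective additive map, and it is surjective onto $u$ because $u$ is perfect. Perfectness follows from Remark~\ref{obs-perfect-pana-la-transcendent2}, or more directly from $\lambda$ being purely inseparable.

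Define $\phi(\beta)=c\otimes\beta\boxedexp{4}$, so that $P_{\alpha}(\beta)=\phi(\beta)\oplus\beta$. Set $\beta_0=\lambda$ and $\beta_{k+1}=\phi(\beta_k)$. Telescoping gives
\[
P_{\alpha}(\beta_0\oplus\cdots\oplus\beta_{m-1})=\beta_0\oplus\beta_m=\lambda\oplus\beta_m .
\]
It therefore suffices to find $m$ with $\beta_m\in u'$.

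One computes $\beta_m=c^{\,e_m}\otimes\lambda\boxedexp{4^m}$ for some exponent $e_m\in\mathbb N$, where powers are Nim powers. Since $c\in u'$, it is enough to have $\lambda\boxedexp{4^m}\in u'$. Choosing $m$ with $2m\ge n$ works, because $\lambda\boxedexp{2^n}\in u'$ implies $\lambda\boxedexp{2^{k}}\in u'$ for all $k\ge n$. Then $\mu=\beta_m\in u'$, so $\mu<u'\le\lambda$, and $\lambda\oplus\mu\in P_{\alpha}[u]$ contradicts $\lambda\in L_{\alpha}(u)$.

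The delicate points are checking that Nim-powering is additive in characteristic $2$, so that $\phi$ is additive and the telescoping identity holds, and that $\beta_m$ genuinely lies in $u'$. Both follow from the Frobenius being a ring homomorphism.
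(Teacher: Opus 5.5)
Your proof is correct and follows the paper's argument: the first inclusion uses that $(\tau\oplus\alpha)\otimes X\boxedexp{4}\oplus X\oplus\lambda\oplus s$ has derivative $1$, so a purely inseparable root must already lie in $u'$, and the second uses the same telescoping, where the paper's exponents $a_1=1$, $a_{n+1}=4a_n+1$ are your $e_m$ (the paper takes $m=n$, you take $2m\ge n$). One aside is false and should be deleted: $u$ need not be perfect (the hypotheses allow, e.g., $u=u'=\tau(\tau)$, and neither the cited remark nor pure inseparability of $\lambda$ gives perfectness), but you never use surjectivity of $\beta\mapsto(\tau\oplus\alpha)\otimes\beta\boxedexp{4}$, so the rest of the argument is unaffected.
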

	
	\begin{proof}
		Let $\lambda \in L_{\alpha}(u')$. Then $\lambda <u'$ is a group. Assume towards a contradiction that there exist $x \in u$, $s < \lambda$ such that $P_{\alpha}(x) = \lambda \oplus s$. Then the minimal polynomial of $x$ over the field $u'$ divides the polynomial $ (\tau \oplus \alpha) \otimes X \boxedexp{4} \oplus X \oplus (\lambda \oplus s) $. But, from Proposition~\ref{pur-insep}, the minimal polynomial of $x$ over the field $u'$ has the form $(X \oplus x ) \boxedexp{2^n}$, for some $n \in \omega$. Since $ ((\tau \oplus \alpha) \otimes X \boxedexp{4} \oplus X \oplus (\lambda \oplus s))' = 1 $, it follows that $(\tau \oplus \alpha ) \otimes X \boxedexp{4} \oplus X \oplus (\lambda \oplus s)$ has no double roots, so $n=0$, from which it follows that $x \in u'$, a contradiction with $\lambda \in L_{\alpha}(u')$. Thus $ L_{\alpha}(u') \subset  L_{\alpha}(u)$.
		
		Let $\lambda \in  L_{\alpha}(u)$. Assume towards a  contradiction that $\lambda \notin  L_{\alpha}(u')$. Then $\lambda \geq u'$. From Proposition~\ref{pur-insep} it follows that there exists $n \in \omega $ such that $\lambda \boxedexp{2^n} \in u'$, hence also $\lambda \boxedexp{4^n} \in u'$. We consider the sequence $(a_n)_n$ such that $a_1 = 1$ and $a_{n+1} = 4 a_n +1$, for any $n>0$. Then $(\tau \oplus \alpha) \boxedexp{a_n} \otimes \lambda \boxedexp{4^n} \oplus \lambda = ((\tau \oplus \alpha) \boxedexp{a_n} \otimes \lambda \boxedexp{4^n} \oplus (\tau \oplus \alpha) \boxedexp{a_{n-1}} \otimes \lambda \boxedexp{4^{n-1}}) \oplus ((\tau \oplus \alpha) \boxedexp{a_{n-1}} \otimes \lambda \boxedexp{4^{n-1}} \oplus (\tau \oplus \alpha) \boxedexp{a_{n-2}} \otimes \lambda \boxedexp{4^{n-2}}) \oplus \dots \oplus ((\tau \oplus \alpha) \otimes \lambda \boxedexp{4} \oplus \lambda) = P_{\alpha}((\tau \oplus \alpha) \boxedexp{a_{n-1}} \otimes \lambda \boxedexp{4^{n-1}} \oplus \dots \oplus \lambda  ) \in P_{\alpha}(u')$ and $(\tau \oplus \alpha) \boxedexp{a_{n}} \otimes \lambda \boxedexp{4^{n}}<u'\leq \lambda $, a contradiction with $\lambda \in L_{\alpha}(u)$. Thus $ L_{\alpha}(u) \subset  L_{\alpha}(u')$.
	\end{proof}
	
	\begin{remark} \label{obs-l-general-tau}
		From Proposition~\ref{inch-perfecta-ord-corp-alg-inchis}, Proposition~\ref{corolar-pur-insep} and Proposition~\ref{prop-L-general} it follows that $L_{\alpha}(\tau(\tau)) = L_{\alpha}(\tau ^{\tau \times \omega})$, for any $\alpha<\tau$.
	\end{remark}
	
		\begin{proposition} \label{minim-L-general}
		Let $\alpha < \tau$. Let $u$ be a field ordinal such that $ P_{\alpha}[u] \neq u $. Let $ x = \min(u \setminus P_{\alpha}[u])$. Then $x = \min L_{\alpha}(u)$. 
		
		In particular, if $ P_{\alpha}[u] \neq u $, then $L_{\alpha}(u)$ is non-empty. 
	\end{proposition}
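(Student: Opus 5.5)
I will follow the proof of Proposition~\ref{minim-L} almost verbatim, replacing $P$ by $P_{\alpha}$. The one structural fact needed is that $P_{\alpha}[u]$ is an additive subgroup of $u$ when $u$ is a field ordinal. By the Remark after the definition of $P_{\alpha}$, the map $P_{\alpha}$ is additive, so its image of the additive group $u$ is closed under $\oplus$. It remains to check that $P_{\alpha}[u]\subset u$. Here I will use that $\tau\oplus\alpha\in u$ and that $u$ is closed under $\oplus,\otimes$. Strictly speaking this needs $\tau<u$. In every intended application $u\supseteq\tau(\tau)$, and I will assume this (if $u\le\tau$, the set $P_{\alpha}[u]$ is still an additive subgroup of $\ordinali$ and the argument below still goes through, reading $P_{\alpha}[u]\cap u$ where needed).

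\textbf{Step 1: $x\in L_{\alpha}(u)$.} Note $x\neq 0$, since $0=P_{\alpha}(0)\in P_{\alpha}[u]$. Suppose, towards a contradiction, that there are $\beta\in u$ and $\lambda_1,\dots,\lambda_n<x$ with $x\oplus P_{\alpha}(\beta)=\lambda_1\oplus\dots\oplus\lambda_n$. Since $x\in u$, each $\lambda_i<x$ lies in $u$. By minimality of $x$ in $u\setminus P_{\alpha}[u]$, each $\lambda_i$ then lies in $P_{\alpha}[u]$. Hence
\[
x=P_{\alpha}(\beta)\oplus\lambda_1\oplus\dots\oplus\lambda_n\in P_{\alpha}[u]
\]
by the subgroup property, a contradiction.

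\textbf{Step 2: $x$ is the least element of $L_{\alpha}(u)$.} Let $y\in L_{\alpha}(u)$ and suppose $y<x$. By minimality of $x$ we have $y\in P_{\alpha}[u]$, so $y=P_{\alpha}(\beta)$ for some $\beta\in u$. Then $y\oplus P_{\alpha}(\beta)=0$, which is the empty Nim-sum of ordinals smaller than $y$. This contradicts $y\in L_{\alpha}(u)$. Hence $x\le y$, and so $x=\min L_{\alpha}(u)$.

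The final assertion follows immediately: if $P_{\alpha}[u]\neq u$, then $u\setminus P_{\alpha}[u]$ is nonempty and its minimum $x$ lies in $L_{\alpha}(u)$ by Step 1.

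The only delicate point is the containment $P_{\alpha}[u]\subseteq u$, i.e.\ $\tau\oplus\alpha\in u$. The rest is the same bookkeeping as in Proposition~\ref{minim-L}, and I expect no real obstacle there.
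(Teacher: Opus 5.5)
Your proof is correct and follows the paper's argument: $x\in L_\alpha(u)$ because every ordinal below $x$ lies in $P_\alpha[u]$, which is closed under $\oplus$. Minimality holds because any $y<x$ in $L_\alpha(u)$ would satisfy $y\oplus P_\alpha(\beta)=0$. As your own parenthetical suggests, the containment $P_\alpha[u]\subseteq u$ that you flag as delicate is never actually used; the argument only needs closure of $P_\alpha[u]$ under $\oplus$, which follows from additivity of $P_\alpha$ and $u$ being an additive group.
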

	
	\begin{proof}
		We prove that $x \in L_{\alpha}(u)$. Suppose by contradiction that $x \notin L_{\alpha}(u) $, then there exist $\beta \in u$, $ \lambda_1, \dots , \lambda_n < x $ such that $x \oplus P_{\alpha}(\beta) = \lambda_1 \oplus \dots \oplus \lambda_n$. Since $ x = \min(u \setminus P_{\alpha}[u])$, it follows that 
		$ \lambda_1, \dots , \lambda_n \in P_{\alpha}[u]$. Since $P_{\alpha}[u]$ is an additive subgroup of $u$, it follows that $ x \in P_{\alpha}[u] $, a contradiction.
		
		Let $y \in L_{\alpha}(u)$. Suppose by contradiction that $x>y$. Then $y \in P_{\alpha}[u]$. For $\beta \in u$ such that $ P_{\alpha}(\beta)=y$, $y \oplus P_{\alpha}(\beta) = 0$, a contradiction with the fact that $ y \in L_{\alpha}(u)$.
	\end{proof}
	
	Next, we study how $L_{\alpha}(u(u))$ changes from $L_{\alpha}(u)$ for certain field ordinals $u$.
	
	\begin{lemma} \label{L-general-lema-1}
		Let $\alpha < \tau $. Let $ u > \tau^\tau $ be a perfect  field ordinal, but which is not quadratically closed, such that $ u \boxedexp{2} \oplus u = \eta $, for some $\eta \in u$ and the only root of $P_{\alpha} $ in $u$ is $0$. Then:
		$$ L_{\alpha}(u(u)) = L_{\alpha}(u) \cup \{ u \times \lambda \mid \lambda \in L_{\alpha}(u) \}. $$
		In particular, $L_{\alpha}(u)$ is an initial segment for $L_{\alpha}(u)$.
	\end{lemma}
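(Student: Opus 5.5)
The plan is to use the fact that $u(u)$ is a quadratic extension with a completely explicit description. Since $u$ is perfect, not quadratically closed and $u\boxedexp{2}\oplus u=\eta\in u$, the lexicographically least irreducible polynomial over $u$ has degree $2$. So Proposition~\ref{urm-corp-2} gives $u(u)=u^2$, and Theorem~\ref{set} f) gives $u(u)=\{(u\otimes x)\oplus y \mid x,y<u\}$. Since $u$ is a field, Theorem~\ref{set} d) and b) give $(u\otimes x)\oplus y = u\times x+y$. I will then compute $P_\alpha$ on a general element $\beta=(u\otimes x)\oplus y$, writing $c:=\tau\oplus\alpha$. Note that $c<\tau<u$ because $u>\tau^\tau$. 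Using $u\boxedexp{2}=u\oplus\eta$ and the Frobenius in characteristic $2$:
\[
\beta\boxedexp{2}=u\otimes x\boxedexp{2}\oplus \eta\otimes x\boxedexp{2}\oplus y\boxedexp{2},
\qquad
\beta\boxedexp{4}=u\otimes x\boxedexp{4}\oplus(\eta\oplus\eta\boxedexp{2})\otimes x\boxedexp{4}\oplus y\boxedexp{4}.
\]
Hence
\[
P_\alpha(\beta)=\bigl(u\otimes P_\alpha(x)\bigr)\oplus\bigl(c\otimes(\eta\oplus\eta\boxedexp{2})\otimes x\boxedexp{4}\oplus P_\alpha(y)\bigr),
\]
where both $P_\alpha(x)$ and the second bracket lie in $u$. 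The key structural point is that the $u$-coordinate of $P_\alpha(\beta)$ is exactly $P_\alpha(x)$. The whole proof then mimics the successor step of Theorem~\ref{claim-inch-patr}, using the alternative description of $L_\alpha$ as groups $\lambda$ with no $\lambda\oplus\lambda_1$, $\lambda_1<\lambda$, in the image.

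\textbf{Inclusion $\supseteq$.} First take $\lambda\in L_\alpha(u)$ and suppose $P_\alpha(\beta)=\lambda\oplus\lambda_1$ with $\lambda_1<\lambda<u$ and $\beta\in u(u)$.
\begin{enumerate}[a)]
\item Comparing $u$-coordinates gives $P_\alpha(x)=0$.
\item The hypothesis that $0$ is the only root of $P_\alpha$ in $u$ then forces $x=0$. This is exactly where that hypothesis is used, and it replaces the $x\in\{0,1\}$ case split in Lenstra's argument.
\item With $x=0$ we get $P_\alpha(y)=\lambda\oplus\lambda_1$, contradicting $\lambda\in L_\alpha(u)$.
\end{enumerate}
Next take $u\times\lambda$ with $\lambda\in L_\alpha(u)$.
\begin{enumerate}[a)]
\item It is a group by Lemma~\ref{lema-produs-grupuri}.
\item Any $\lambda_1<u\times\lambda$ has the form $u\times a'+b=(u\otimes a')\oplus b$ with $a'<\lambda$ and $b<u$.
\item Comparing $u$-coordinates in $P_\alpha(\beta)=(u\times\lambda)\oplus\lambda_1$ gives $P_\alpha(x)=\lambda\oplus a'$, which is impossible.
\end{enumerate}

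\textbf{Inclusion $\subseteq$.} Let $\mu\in L_\alpha(u(u))$, so $\mu$ is a group.
\begin{enumerate}[a)]
\item If $\mu<u$, then $\mu\in L_\alpha(u)$, since $P_\alpha[u]\subset P_\alpha[u(u)]$.
\item Otherwise $\mu=u\times x+y$ with $0<x<u$ and $y<u$. If $y>0$, then $\mu=(u\times x)\oplus y$ is a Nim-sum of two smaller ordinals, so $\mu$ is not a group; hence $y=0$.
\item It remains to show $x\in L_\alpha(u)$. If not, there are $\beta\in u$ and $x_1,\dots,x_n<x$ with $x\oplus P_\alpha(\beta)=x_1\oplus\dots\oplus x_n$.
\item Apply the formula above with $u\otimes\beta$ in place of $\beta$, i.e.\ the element with $u$-coordinate $\beta$ and constant coordinate $0$:
\[
\mu\oplus P_\alpha(u\otimes\beta)=\bigl(u\otimes(x_1\oplus\dots\oplus x_n)\bigr)\oplus c\otimes(\eta\oplus\eta\boxedexp{2})\otimes\beta\boxedexp{4}.
\]
\item The right-hand side is $u\times x''+b$ with $x''<x$ and $b<u$, so it is smaller than $\mu$. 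This contradicts $\mu\in L_\alpha(u(u))$.
\end{enumerate}

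\textbf{The initial-segment claim.} Every $u\times\lambda$ with $\lambda\neq 0$ is at least $u$, while $L_\alpha(u)\subset u$. Hence $L_\alpha(u)$ is an initial segment of $L_\alpha(u(u))$.

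\textbf{Main obstacle.} I expect the only real work to be the expansion of $P_\alpha$ on a general element $(u\otimes x)\oplus y$. One must carefully check that the cross terms coming from $u\boxedexp{2}=u\oplus\eta$ land entirely in the constant coordinate, so that the $u$-coordinate is exactly $P_\alpha(x)$. One also needs to confirm the identification $(u\otimes x)\oplus y=u\times x+y$ that is used in the order comparisons.
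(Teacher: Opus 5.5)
Your argument is essentially the paper's proof. It uses the same coordinate expansion of $P_\alpha((u\otimes x)\oplus y)$ with $u$-coordinate $P_\alpha(x)$, the same appeal to the hypothesis on the roots of $P_\alpha$ to force $x=0$, and the same witness $u\otimes\beta$ for the inclusion $\subseteq$.

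Two small repairs are needed:
\begin{itemize}
\item $c=\tau\oplus\alpha=\tau+\alpha$ is not smaller than $\tau$. What is true, and all you need, is $c<\tau\times 2<u$.
\item In step e), the claim $x_1\oplus\dots\oplus x_n<x$ is not automatic; for example $1\oplus 2=3$. It does hold here, because $x$ is a group by Lemma~\ref{lema-produs-grupuri}, since both $\mu=u\times x$ and $u$ are groups. Alternatively, you can avoid the claim as the paper does: write $\mu\oplus P_\alpha(u\otimes\beta)$ directly as the Nim-sum of $(u\otimes x_1)\oplus b,\ u\otimes x_2,\ \dots,\ u\otimes x_n$, each of which is smaller than $\mu$.
\end{itemize}
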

	
	\begin{proof}
		Let $\lambda \in L_{\alpha}(u(u))$. If $\lambda < u$, then $\lambda \in L_{\alpha}(u)$. If $\lambda \geq u$, there exist $ x, y <u, \ x \geq 1$ such that $\lambda = u \times x + y$. Since $\lambda $ is a group, it follows that $y = 0$, otherwise $\lambda = (u \times x) \oplus y$ and $u \times x ,\ y < \lambda$.
		
		If $x \notin L_{\alpha}(u)$, then there exist $ \beta \in u$, $n \in \omega$, $x_1,\dots , x_n \in x $ such that $x \oplus P_{\alpha}(\beta) = x_1 \oplus \dots \oplus x_n$. Let $\eta_1 = u \otimes x_1 \oplus (\tau \oplus \alpha )  \otimes \beta \boxedexp{4} \otimes (\eta \oplus \eta \boxedexp{2}) \oplus y$, $\eta_2 = u \otimes x_2, \dots, \eta_n = u \otimes x_n $. Since $\tau + \alpha, \beta, \eta , y < u$ and $x_1, \dots , x_n < x$ it follows that $ \eta_1, \dots, \eta_n < \lambda $.
		But $\lambda \oplus \eta_1 \oplus \dots \oplus \eta_n = P_{\alpha}(u \otimes \beta )$, a contradiction with $\lambda \in L_{\alpha}(u(u))$. Thus $\lambda \in L_{\alpha}(u) \cup \{ u \times \lambda \mid \lambda \in L_{\alpha}(u) \}$.
		
		Let $\lambda \in L_{\alpha}(u)$. Then $\lambda $ is a group. Suppose there exist $\beta \in u(u)$, $s < \lambda$ such that  $P_{\alpha}(\beta) = \lambda \oplus s $. There exist $x, y <u$ such that $\beta = u \otimes x \oplus y $. It follows that $u \otimes ((\tau \oplus \alpha ) \otimes x \boxedexp{4} \oplus x) \oplus ((\tau \oplus \alpha ) \otimes (x \boxedexp{4} \otimes (\eta \oplus \eta\boxedexp{2}) \oplus y \boxedexp{4}) \oplus y \oplus \lambda \oplus s ) = 0 $, so $ (\tau \oplus \alpha ) \otimes x \boxedexp{4} \oplus x =0 $. Since $x < u $, from the hypothesis it follows that $x=0$. It follows that $\beta < u$, a contradiction with $ \lambda \in L_{\alpha}(u) $.
		
		Let $a \in  L_{\alpha}(u)$. We prove that $\lambda = u \times a \in  L_{\alpha}(u(u))$. From Lemma~\ref{lema-produs-grupuri}, we have that $\lambda$ is a group. Suppose there exist  $ \lambda'  < \lambda $, $ \beta \in u(u) $ such that $\lambda \oplus \lambda' = P_{\alpha}(\beta)$. There exist $ x, y < u $, $a' < a$, $ b<u $ such that $\beta = u \otimes x \oplus y$, $\lambda' = u \otimes a' \oplus b$. Then $ a \oplus a' =( \tau \oplus \alpha) \otimes x \boxedexp{4} \oplus x $, a contradiction with $ a \in  L_{\alpha}(u) $.    
		
	\end{proof}

	We now show a description for $L_{\alpha}(\sup \{ u_\eta \mid \eta < \delta \})$ for field ordinals $(u_{\eta})_{\eta < \delta}$ that is similar to the one for $L$, proved in Theorem~\ref{claim-inch-patr}.
	
	\begin{lemma} \label{L-general-lema-2}
		Let $\theta < \tau $. Let $\delta $ be a limit ordinal. Let $ (u_\eta)_{\eta \leq \delta} $ be field ordinals such that $ u_{\delta} = \sup\{u_{\eta} \mid \eta<\delta \}$.
		Then: 
		$$
		L_{\theta}(u_{\delta}) = \bigcup_{\alpha < \delta} \bigcap_{\delta>\beta \geq \alpha} L_{\theta}(u_{\beta});
		$$
		
		$$
		L_{\theta}(u_{\delta}) = \bigcap_{\alpha < \delta} \bigcup_{\delta>\beta \geq \alpha} L_{\theta}(u_{\beta}).
		$$
		
	\end{lemma}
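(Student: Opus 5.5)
We follow the argument given for $L$ inside the proof of Theorem~\ref{claim-inch-patr}, replacing $P$ by $P_{\theta}$ throughout. The only facts needed are the alternative description of $L_{\theta}$ from the proposition following the definition of $L_\theta$, and the identity $P_{\theta}[u_{\delta}] = \bigcup_{\eta<\delta} P_{\theta}[u_{\eta}]$. The latter holds because every $\beta \in u_{\delta}$ lies in some $u_{\eta}$ with $\eta<\delta$. We will tacitly assume, as in every intended application, that the sequence $(u_\eta)_{\eta<\delta}$ is non-decreasing. Then $\bigcup_{\beta\ge\alpha}$ over any tail still covers $u_\delta$, and the same holds for the $P_\theta$-images. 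For $\lambda$ a group we abbreviate $S_\lambda := \{\lambda \oplus s \mid s<\lambda\} = \{\lambda + s \mid s < \lambda\}$. The condition for $\lambda \in L_{\theta}(u)$ then reads: $\lambda \in u\setminus\{0\}$, $\lambda$ is a group, and $S_\lambda \cap P_{\theta}[u] = \emptyset$.

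\emph{First inclusions.} Let $\lambda \in L_{\theta}(u_{\delta})$. Since $\lambda < u_{\delta} = \sup u_\eta$, there is an $\alpha<\delta$ with $\lambda \in u_{\alpha}$, and hence $\lambda \in u_\beta$ for all $\beta \in [\alpha,\delta)$. For such $\beta$ we have $P_{\theta}[u_{\beta}] \subset P_{\theta}[u_{\delta}]$, so $S_\lambda \cap P_{\theta}[u_\beta] = \emptyset$ and $\lambda \in L_{\theta}(u_\beta)$. Thus $\lambda \in \bigcap_{\beta\ge\alpha} L_\theta(u_\beta)$ for this $\alpha$. This gives $L_{\theta}(u_{\delta}) \subset \bigcup_{\alpha}\bigcap_{\beta\ge\alpha} L_\theta(u_\beta)$. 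The general inclusion $\bigcup_\alpha\bigcap_{\beta\ge\alpha} \subset \bigcap_\alpha\bigcup_{\beta\ge\alpha}$ then gives $L_{\theta}(u_{\delta})\subset \bigcap_{\alpha}\bigcup_{\beta\ge\alpha} L_\theta(u_\beta)$ as well. This last step uses that $\delta$ is a limit, so tails are nonempty and cofinal.

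\emph{Reverse inclusion.} Since the liminf is contained in the limsup, it suffices to show $\bigcap_{\alpha<\delta}\bigcup_{\delta>\beta\ge\alpha} L_\theta(u_\beta) \subset L_\theta(u_\delta)$; the first equality then follows as well. Take $\lambda$ in the left-hand side. For every $\alpha<\delta$ pick $\beta_\alpha \in [\alpha,\delta)$ with $\lambda \in L_\theta(u_{\beta_\alpha})$. Then $\lambda$ is a nonzero group lying in $u_{\beta_0} \subset u_\delta$, and $S_\lambda \cap P_\theta[u_{\beta_\alpha}] = \emptyset$ for all $\alpha$. Since the $\beta_\alpha$ are cofinal in $\delta$ and the $u_\eta$ are non-decreasing, $\bigcup_\alpha u_{\beta_\alpha} = u_\delta$. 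Therefore $\bigcup_\alpha P_\theta[u_{\beta_\alpha}] = P_\theta[u_\delta]$, which yields $S_\lambda \cap P_\theta[u_\delta]=\emptyset$ and hence $\lambda\in L_\theta(u_\delta)$.

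\emph{Main obstacle.} The only delicate point is the identity $\bigcup_\alpha P_\theta[u_{\beta_\alpha}] = P_\theta[u_\delta]$, that is, cofinality of the chosen indices together with the monotonicity of $(u_\eta)$. Monotonicity is implicit in the statement and is satisfied in every use of the lemma, where $u_\eta$ is an increasing tower of fields. The rest is routine set manipulation, verbatim as in Theorem~\ref{claim-inch-patr}.
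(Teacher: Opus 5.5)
Your proof is correct and is essentially the paper's argument. It uses the same three ingredients: $\lambda$ lies in a tail of the $u_\beta$, $P_\theta[u_\beta]\subset P_\theta[u_\delta]$, and $P_\theta[u_\delta]=\bigcup_\alpha P_\theta[u_{\beta_\alpha}]$ for cofinal indices $\beta_\alpha$. The only difference is a small economy: you close the cycle $L_\theta(u_\delta)\subset\bigcup_{\alpha}\bigcap_{\beta\ge\alpha}L_\theta(u_\beta)\subset\bigcap_{\alpha}\bigcup_{\beta\ge\alpha}L_\theta(u_\beta)\subset L_\theta(u_\delta)$ instead of proving four separate inclusions.

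You are also right to make the monotonicity of $(u_\eta)_{\eta<\delta}$ explicit. The paper uses it tacitly in the same places, and the lemma fails without it, for instance when the first term of the sequence is strictly the largest.
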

	
	\begin{proof}
		
		Let $\lambda \in L_{\theta}(u_{\delta})$. Then $\lambda$ is a group, $\{  \lambda + s \mid s < \lambda  \}\cap P_{\theta}[u_{\delta}] = \emptyset$. There exists $\alpha < \delta $ such that $\lambda \in u_\alpha$, so $\lambda \in L_{\theta}(u_{\alpha})$. For any $ \delta > \beta \geq \alpha$, we have that $\lambda \in u_{\beta}$, $\{  \lambda + s \mid s < \lambda  \}\cap P_{\theta}[u_{\beta}]  \subset     \{  \lambda + s \mid s < \lambda  \}\cap P_{\theta}[u_{\delta}] = \emptyset$, so $\lambda \in L_{\theta}(u_{\beta})$. Thus $L_{\theta}(u_{\delta}) \subset \bigcup_{\alpha < \delta} \bigcap_{\delta>\beta \geq \alpha} L_{\theta}(u_{\beta})$ and $ L_{\theta}(u_{\delta}) \subset  \bigcap_{\alpha < \delta} \bigcup_{\delta>\beta \geq \alpha} L_{\theta}(u_{\beta}) $.
		
		Let $\lambda \in \bigcup_{\alpha < \delta} \bigcap_{\delta>\beta \geq \alpha} L_{\theta}(u_{\beta}) $. Then there exists $\alpha < \delta $ such that $\lambda \in \bigcap_{\delta>\beta \geq \alpha} L_{\theta}(u_{\beta})$. It follows that $ \{  \lambda + s \mid s < \lambda  \}\cap P_{\theta}[u_{\beta}] = \emptyset$, for any $ \delta > \beta \geq \alpha$. Since $\bigcup_{\delta>\beta \geq \alpha} P_{\theta}[u_{\beta}] = P_{\theta}[u_{\delta}] $, it follows that $ \{  \lambda + s \mid s < \lambda  \}\cap P_{\theta}[u_{\delta}] = \emptyset$.
		Obviously $\lambda$ is a group, $\lambda \in u_{\delta}$, so $ \lambda \in L_{\theta}(u_{\delta}) $. 
		
		Let $ \lambda \in \bigcap_{\alpha < \delta} \bigcup_{\delta>\beta \geq \alpha} L_{\theta}(u_{\beta}) $. 
		Then, for any $\alpha < \delta$, there exists $ \alpha \leq \beta_{\alpha} < \delta $ such that $ \lambda \in L_{\theta}(u_{\beta_{\alpha}}) $.
		So $\lambda \in u_{\delta}$ and is a group and $ \{  \lambda + s \mid s < \lambda  \}\cap P_{\theta}[u_{\beta_{\alpha}}] = \emptyset$, for any $\alpha < \delta$. Since $\bigcup_{\alpha < \delta} P_{\theta}[u_{\beta_{\alpha}}] = P_{\theta}[u_{\delta}] $, it follows that $ \{  \lambda + s \mid s < \lambda  \}\cap P_{\theta}[u_{\delta}] = \emptyset$. So $\lambda \in L_{\theta}(u_{\delta})  $.
	\end{proof}
	
	\begin{lemma} \label{lema-A}
		Let $\theta < \tau$. Let $\delta $ be an ordinal. Consider $ (u_\eta)_{\eta \leq \delta} $ field ordinals such that, for any $\eta < \delta $, $u_{\eta} \leq u_{\eta+1}$, and $  L_{\theta}(u_{\eta})$ is an initial segment for $  L_{\theta}(u_{\eta+1})$ and for any $\alpha \leq \delta$ limit ordinal $ u_{\alpha} = \sup\{ u_{\eta} \mid \eta < \alpha  \} $. Then:
		
		\begin{itemize}
			\item for all $\alpha \leq \beta \leq \delta $, $L_{\theta}(u_{\alpha})$ is an initial segment of $L_{\theta}(u_{\beta})$;
			
			\item for all $\alpha \leq \delta  $ limit ordinal, $L_{\theta}(u_{\alpha}) = \bigcup_{\eta< \alpha} L_{\theta}(u_{\eta})  $, so   $\otype(L_{\theta}(u_\alpha)) = \sup \{ L_{\theta}(u_{\eta}) \mid \eta < \alpha  \}$.
		\end{itemize}
	\end{lemma}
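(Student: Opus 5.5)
We argue by transfinite induction on $\beta$, for fixed $\alpha \leq \delta$, that $L_{\theta}(u_{\alpha})$ is an initial segment of $L_{\theta}(u_{\beta})$ for every $\alpha \leq \beta \leq \delta$. We use the convention that $A$ is an initial segment of $B$ when $A \subset B$ and every element of $B \setminus A$ exceeds every element of $A$. This relation is transitive, which we use repeatedly.

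The base case $\beta = \alpha$ is trivial. If $\beta = \gamma + 1$ with $\gamma \geq \alpha$, then by the induction hypothesis $L_{\theta}(u_{\alpha})$ is an initial segment of $L_{\theta}(u_{\gamma})$. By hypothesis $L_{\theta}(u_{\gamma})$ is an initial segment of $L_{\theta}(u_{\gamma+1})$, and transitivity gives the claim.

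The main step is the limit case. Let $\beta$ be a limit with $\alpha < \beta$. We first prove the second bullet, namely $L_{\theta}(u_{\beta}) = \bigcup_{\eta<\beta} L_{\theta}(u_{\eta})$, using Lemma~\ref{L-general-lema-2}. Its first formula gives $L_{\theta}(u_{\beta}) = \bigcup_{\eta<\beta}\bigcap_{\beta>\gamma\geq\eta} L_{\theta}(u_{\gamma}) \subset \bigcup_{\eta<\beta} L_{\theta}(u_{\eta})$. For the reverse inclusion, take $\lambda \in L_{\theta}(u_{\eta})$ with $\eta<\beta$. By the induction hypothesis applied to the pair $(\eta,\gamma)$ for $\eta \leq \gamma < \beta$, we have $L_{\theta}(u_{\eta}) \subset L_{\theta}(u_{\gamma})$. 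Hence $\lambda \in \bigcap_{\beta>\gamma\geq\eta} L_{\theta}(u_{\gamma}) \subset L_{\theta}(u_{\beta})$.

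To make this legitimate, the induction hypothesis must be the statement for all pairs $\eta \leq \gamma < \beta$, not just for the fixed $\alpha$. So we phrase the whole induction on $\beta$ as: for all $\eta \leq \gamma \leq \beta$, $L_{\theta}(u_{\eta})$ is an initial segment of $L_{\theta}(u_{\gamma})$.

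With the union description in hand, the initial-segment property at $\beta$ follows. Let $\lambda \in L_{\theta}(u_{\beta}) \setminus L_{\theta}(u_{\alpha})$ and $x \in L_{\theta}(u_{\alpha})$. Then $\lambda \in L_{\theta}(u_{\eta})$ for some $\eta$ with $\alpha \leq \eta < \beta$; we may enlarge $\eta$ to be at least $\alpha$ using the inclusions. Since $L_{\theta}(u_{\alpha})$ is an initial segment of $L_{\theta}(u_{\eta})$, we get $\lambda > x$.

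Finally, the order-type statement follows from the fact that $L_{\theta}(u_{\alpha})$ is an increasing union of a chain of initial segments. Therefore its order type is the supremum of the order types $\otype(L_{\theta}(u_{\eta}))$ for $\eta<\alpha$; the statement's $\sup\{L_{\theta}(u_{\eta}) \mid \eta<\alpha\}$ should be read this way. The only delicate point in the whole argument is arranging the induction so that it covers all pairs simultaneously; everything else is bookkeeping.
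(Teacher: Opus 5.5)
Your proof is correct and follows essentially the paper's route: induction on $\beta$, transitivity at successor steps, and Lemma~\ref{L-general-lema-2} at limit steps. The one difference is that you prove the union formula inside the limit step, which forces your all-pairs induction hypothesis; the paper keeps $\alpha$ fixed by intersecting only over the tail $\alpha \leq \eta < \beta$, and deduces the union formula afterwards from the first bullet and Lemma~\ref{L-general-lema-2}.
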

	
	\begin{proof}
		Let $\alpha \leq \delta$.
		We prove the first bullet by induction on $\beta$. For $\beta=\alpha$ it is obvious. If $L_{\theta}(u_{\alpha})$ is an initial segment of $L_{\theta}(u_{\beta})$, we prove that $L_{\theta}(u_{\alpha})$ is an initial segment of $L_{\theta}(u_{\beta+1})$. From the hypothesis, we know that $L_{\theta}(u_{\beta})$ is an initial segment of $L_{\theta}(u_{\beta+1})$, so $L_{\theta}(u_{\alpha})$ is an initial segment of $L_{\theta}(u_{\beta+1})$. If $\beta>\alpha$ is a limit ordinal, we show that  $L_{\theta}(u_{\alpha})$ is an initial segment of $L_{\theta}(u_{\beta})$. Let $x \in L_{\theta}(u_{\alpha})$. Then $x \in \bigcap_{\beta>\eta\geq \alpha} L_{\theta}(u_{\eta})$, thus from Lemma~\ref{L-general-lema-2}, $x \in L_{\theta}(u_{\beta})$. 
		
		Let $x \in L_{\theta}(u_\alpha)$ and $y \in  L_{\theta}(u_\beta) $ be such that $y<x$. From Lemma~\ref{L-general-lema-2}, there exists $\alpha \leq \eta < \beta $ such that $y \in L_{\theta}(u_\eta)$. From the induction hypothesis, $y \in L_\theta(u_\alpha)$.
		
		We now prove the second bullet point. The reverse inclusion is obvious. We now prove the forward one. Let $x \in L_{\theta}(u_{\alpha})$. From Lemma~\ref{L-general-lema-2}, there exists $\eta< \alpha$, such that for all $\eta \leq \gamma < \alpha $, $x \in L_{\theta}(u_\gamma)$. In particular, for $\gamma := \eta$, $x \in L_\theta(u_\eta)$.
		
	\end{proof}

	\begin{lemma} \label{lema-B}
		Let $\theta < \tau $. Consider $ (u_\eta)_{\eta \leq \delta} $ field ordinals such that, for any $\eta \leq \delta $, $ u_{\eta} > \tau^\tau $ is a perfect field ordinal, but which is not quadratically closed, so we denote $ a_\eta := u_{\eta} \boxedexp{2} \oplus u_{\eta} \in u_{\eta} $, and the only root of $P_{\theta} $ in $u_{\eta}$ is $0$. Moreover, suppose that $u_{\eta+1} = u_{\eta}(u_{\eta})$, for any $\eta < \delta$ and $ u_{\eta} = \sup\{ u_{\eta'} \mid \eta' < \eta  \} $, for any $\eta \leq \delta$ limit ordinal. Then:
		
		\begin{itemize}
			\item the conclusions of Lemma~\ref{lema-A} hold;
			\item for any $\eta \leq \delta$,  $\otype(L_{\alpha}(u_{\eta})) = \otype(L_{\alpha}(u_0)) \times 2^\eta$.
		\end{itemize}

	\end{lemma}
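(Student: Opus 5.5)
The plan is to reduce the first bullet to Lemma~\ref{lema-A}, and then to prove the order-type formula by transfinite induction on $\eta$.

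\textbf{First bullet.} Lemma~\ref{lema-A} needs three hypotheses: monotonicity $u_\eta \le u_{\eta+1}$, continuity at limits, and that $L_\theta(u_\eta)$ is an initial segment of $L_\theta(u_{\eta+1})$ for every $\eta<\delta$. Monotonicity and continuity are given. For the initial-segment property, fix $\eta<\delta$. Then $u_\eta$ satisfies every hypothesis of Lemma~\ref{L-general-lema-1}: it is perfect, not quadratically closed, $u_\eta > \tau^\tau$, $u_\eta\boxedexp{2}\oplus u_\eta = a_\eta \in u_\eta$, and $0$ is the only root of $P_\theta$ in $u_\eta$. Lemma~\ref{L-general-lema-1} therefore gives
$$L_\theta(u_{\eta+1}) = L_\theta(u_\eta)\cup\{u_\eta\times\lambda \mid \lambda\in L_\theta(u_\eta)\}.$$
Every element of the second set is at least $u_\eta$, while every element of $L_\theta(u_\eta)$ is less than $u_\eta$. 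So $L_\theta(u_\eta)$ is an initial segment of $L_\theta(u_{\eta+1})$, and Lemma~\ref{lema-A} applies. (The subscript $\alpha$ in the second bullet should be read as $\theta$.)

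\textbf{Second bullet.} Put $v=\otype(L_\theta(u_0))$ and argue by induction on $\eta\le\delta$; the case $\eta=0$ is trivial.

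For the successor step, the displayed decomposition writes $L_\theta(u_{\eta+1})$ as $L_\theta(u_\eta)$ followed by a block lying entirely above it. The map $\lambda\mapsto u_\eta\times\lambda$ is strictly increasing, since left multiplication by a nonzero ordinal is strictly increasing. So this block has order type $\otype(L_\theta(u_\eta))$. Hence
$$\otype(L_\theta(u_{\eta+1})) = v\times 2^\eta + v\times 2^\eta = v\times 2^\eta\times 2 = v\times 2^{\eta+1},$$
using left distributivity.

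For the limit step, the second bullet of Lemma~\ref{lema-A} says $L_\theta(u_\eta)$ is the increasing union of the initial segments $L_\theta(u_{\eta'})$ for $\eta'<\eta$. Its order type is therefore $\sup_{\eta'<\eta} v\times 2^{\eta'}$. This equals $v\times 2^\eta$ by continuity of $\beta\mapsto v\times 2^\beta$ at limits, which holds because $2^\beta$ and left multiplication are both continuous.

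\textbf{Expected obstacle.} The only delicate point is that the successor step applies Lemma~\ref{L-general-lema-1} at every $\eta<\delta$, including limit $\eta$. This is exactly why the hypotheses are imposed on all $u_\eta$, $\eta\le\delta$, rather than only on $u_0$. Apart from that, the argument is order-type bookkeeping.
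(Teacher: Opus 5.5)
Your proposal is correct and follows the paper's proof. The initial-segment property and the doubling $\otype(L_\theta(u_{\eta+1}))=\otype(L_\theta(u_\eta))\times 2$ both come from the decomposition in Lemma~\ref{L-general-lema-1}, Lemma~\ref{lema-A} handles the limit stages, and the formula follows by transfinite induction; you simply spell out the order-type bookkeeping (strict monotonicity of $\lambda\mapsto u_\eta\times\lambda$, continuity of $\beta\mapsto v\times 2^\beta$) that the paper leaves implicit.
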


	\begin{proof}
		The first bullet point follows from Lemma~\ref{L-general-lema-1} and Lemma~\ref{lema-A}.
		
		For the second bullet, from Lemma~\ref{L-general-lema-1}, we have that $\otype(L_{\alpha}(u_{\eta+1}))= \otype(L_{\alpha}(u_{\eta})) \times 2 $, for any $\eta < \delta$. From the first bullet point, we have that $\otype(L_{\alpha}(u_{\eta}))  =  \sup\{  \otype(L_{\alpha}(u_{\eta'})) \mid \eta' < \eta \}$, for any $\eta \leq \delta$ limit ordinal. The conclusion follows immediately by induction on $\eta$.
		
	\end{proof}

	\begin{lemma} \label{pol-grad3-ext-grad2}
		Let $u$ be a field ordinal. Let the sequence of field ordinals $(u_{\eta})_{\eta \leq y }$ be such that $u_0 = u$, $ u_{\eta + 1} = u_{\eta}(u_{\eta}) $ and $[u_{\eta+1}: u_{\eta} ] = 2$, for any $\eta<y$, $ u_{\eta} = \sup \{ u_{\delta} \mid \delta < \eta \} $, if $\eta$ is a limit ordinal. Let $Q \in u[X]$ be of degree 3. Then $Q$ has roots in $u_y$ if and only if it has roots in $u$.
	\end{lemma}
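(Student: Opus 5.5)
The ``if'' direction is trivial: $u = u_0 \subseteq u_y$, so a root of $Q$ in $u$ is a root in $u_y$. For the converse we follow the pattern of the proof of Proposition~\ref{epsilon^3}. Assume $Q$ has no root in $u$, and prove by transfinite induction on $\eta \leq y$ that $Q$ has no root in $u_\eta$. Since $u_\eta \subseteq u_y$, the case $\eta = y$ gives the claim.

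The base case $\eta = 0$ is the hypothesis.

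For the successor step $\eta = \beta + 1$, suppose $Q$ has no root in $u_\beta$ but has a root $x \in u_{\beta+1}$. Since $Q$ has degree $3$ and no root in $u_\beta$, it has no linear factor over $u_\beta$. Any nontrivial factorization of a cubic contains a linear factor, so $Q$ is irreducible over $u_\beta$. The minimal polynomial of $x$ over $u_\beta$ is then $Q$ up to a scalar, and $[u_\beta(x):u_\beta] = 3$. But $u_\beta \subseteq u_\beta(x) \subseteq u_{\beta+1}$, and the tower law gives
\[
[u_{\beta+1}:u_\beta] = [u_{\beta+1}:u_\beta(x)] \cdot [u_\beta(x):u_\beta].
\]
So $3$ divides $[u_{\beta+1}:u_\beta] = 2$, a contradiction. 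Hence $Q$ has no root in $u_{\beta+1}$.

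For the limit step, $u_\eta = \sup\{u_\delta \mid \delta < \eta\} = \bigcup_{\delta<\eta} u_\delta$. Any root $x \in u_\eta$ would lie in some $u_\delta$ with $\delta < \eta$, which contradicts the induction hypothesis.

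No step is a real obstacle. The only points needing care are, first, that irreducibility of a cubic follows from the absence of roots, and second, that the tower law applies because each $u_\eta$ is a subfield of $u_{\eta+1}$; this holds since $u_{\eta+1} = u_\eta(u_\eta)$. The hypothesis $[u_{\eta+1}:u_\eta] = 2$ is used exactly at the successor step.
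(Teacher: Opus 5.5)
Your proposal is correct and follows the paper's proof essentially step for step. Like the paper, you use transfinite induction on $\eta \le y$, with the same successor step: a cubic with no root is irreducible, so a root in $u_{\eta+1}$ would force a degree-$3$ subextension inside a degree-$2$ extension. The limit step via the union is also the same, and you add only the trivial ``if'' direction and an explicit appeal to the tower law.
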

	
	\begin{proof}
		Suppose that $Q$ has no roots in $u$ and we prove that $Q$ has no roots in $u_y$.
		
		We prove by induction on $ \eta \leq y $ that there is no $x \in u_{\eta}$ such that $Q(x) = 0$. For $\eta = 0$ it is obvious. If there is no $x \in u_{\eta}$ such that $Q(x) = 0$, then since $Q$ has degree 3, it follows that $Q$ is irreducible over $u_{\eta}$. If there were an $x \in u_{\eta+1}$ such that $Q(x) = 0$, then $[u_{\eta}(x): u_{\eta}] = 3$, but $[u_{\eta+1}: u_{\eta} ] = 2$ and $ u_{\eta} \subset u_{\eta}(x) \subset u_{\eta+1} $, a contradiction. For $\eta$ a limit ordinal it is obvious from $ u_{\eta} = \sup \{ u_{\delta} \mid \delta < \eta \} $.
	\end{proof}
	
	Now we have all the results necessary for proving that $\{\varepsilon_\alpha \mid \alpha \leq \omega^{\omega^\omega} \}$ are the next quadratically closed field ordinals.

	\begin{theorem} \label{eps-corpuri-patratic-inchise}
		For any $ \alpha < \tau $, we have that:
		
		\begin{enumerate} [i)]
			\item The ordinal $\varepsilon_\alpha$ is the $\alpha$-th quadratically closed field ordinal after $\varepsilon_0$;
			\item For 
			any $\gamma \in \{ (\tau \oplus \beta_1)\boxedexp{m_1} \otimes \dots \otimes (\tau \oplus \beta_n)\boxedexp{m_n} \mid n >0, \ m_1, \dots, m_n \in \mathbb{Z} \text{ not all divisible by 3}, \ \tau > \beta_1, \dots, \beta_n \geq \alpha  \}$
			the polynomial $X\boxedexp{3} \oplus \gamma $ has no roots in $\varepsilon_\alpha$;
			
			\item For any $\tau > \beta \geq \alpha$, if $\alpha$ is a successor ordinal, then $ L_{\beta}(\varepsilon_{\alpha'}) $ is an initial segment for $ L_{\beta}(\varepsilon_{\alpha}) $, where $\alpha = \alpha'+1$; 
			
			\item For any $\tau > \beta \geq \alpha$, $\otype(L_{\beta}(\varepsilon_{\alpha})) = \varepsilon_{\alpha}$;
			\item $\varepsilon_\alpha \boxedexp{3} = \tau \oplus \alpha = \tau + \alpha $.
			
		\end{enumerate}
	\end{theorem}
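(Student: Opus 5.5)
We argue by transfinite induction on $\alpha<\tau$, proving i)--v) together. The base case $\alpha=0$ is essentially Section~\ref{sec: 3}. Remark~\ref{obs-tau-inch-patr} gives i), and Remark~\ref{obs-tau-radical-de-ordin-3} gives v), namely $\varepsilon_0\boxedexp{3}=\tau$. For ii) we show that $X\boxedexp{3}\oplus\gamma$ has no root in $\tau(\tau)$: writing $\gamma$ as a rational function $\prod(t\oplus\beta_i)^{m_i}$ of the transcendental $\tau$, a root $Q(\tau)\oslash R(\tau)$ would force $R^3\prod(X\oplus\beta_i)^{m_i}=Q^3$ in $\tau[X]$. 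This is impossible, since the multiplicity of some linear factor is not divisible by $3$. The same property then propagates through the degree-$2$ tower $\tau^{\tau\times 2^n}$ and through the Lenstra tower $u_\eta$ of Theorem~\ref{claim-inch-patr}, by Lemma~\ref{pol-grad3-ext-grad2}. For iv) at $\alpha=0$ we compute $L_\beta(\tau(\tau))$ much as in Proposition~\ref{L-de-urm-corp}, using degree or valuation arguments at the places $X\oplus\beta$ with the new map $P_\beta$. We pass to $\tau^{\tau\times\omega}$ by Remark~\ref{obs-l-general-tau}, and then up the tower $u_\eta$ by Lemma~\ref{lema-B}. The hypotheses of Lemma~\ref{lema-B} require that $0$ is the only root of $P_\beta$, i.e.\ that $X\boxedexp{3}=1\oslash(\tau\oplus\beta)$ has no nonzero root, which is exactly ii) with $m=-1$. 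This yields $\otype(L_\beta(\varepsilon_0))=\otype(L_\beta(u_0))\times 2^{\varepsilon_0}=\varepsilon_0$ by Lemma~\ref{lema-epsilon}.

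For the successor step $\alpha=\alpha'+1$, let $u=\varepsilon_{\alpha'}$. By v), $u\boxedexp{3}=\tau\oplus\alpha'$, and by ii) and Theorem~\ref{set} g), $X\boxedexp{3}\oplus(\tau\oplus\alpha')$ is the least irreducible polynomial, so $u(u)=u^3$. The key observation is that in $u(u)$ the quadratic obstructions are now detected through $P_{\alpha'}$. Write elements as $a\oplus b\otimes u\oplus c\otimes u\boxedexp{2}$ and solve $x\boxedexp{2}\oplus x=\lambda$ coordinatewise. Frobenius permutes the coordinates via $u\boxedexp{2}$, $u\boxedexp{4}=(\tau\oplus\alpha')\otimes u$, and this reduces solvability to equations of the form $(\tau\oplus\alpha')\otimes X\boxedexp{4}\oplus X=\cdots$ over $u$. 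Hence $L(u(u))$ is controlled by $L_{\alpha'}(u)$, whose order type is $\varepsilon_{\alpha'}$ by iv). Concretely, we would show $\otype(L(u(u)))\ge\varepsilon_{\alpha'}$ and that it is $<\varepsilon_{\alpha'+1}$. The field $u(u)$ is perfect by Remark~\ref{obs-perfect-pana-la-transcendent2}, so Theorem~\ref{claim-inch-patr} applies and gives quadratic closure $u(u)^{\varepsilon}$, where $\varepsilon$ is the least $\varepsilon$-number above $\otype(L)$, i.e.\ $\varepsilon_{\alpha'+1}$. Absorption as in Theorem~\ref{inch-patratica-transcendent} then gives $u^{3\times\varepsilon}=\varepsilon_{\alpha}$. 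Between $\varepsilon_{\alpha'}$ and this field there is no quadratically closed one, since the intermediate fields are the $u_\eta$, which are not quadratically closed. This proves i).

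For ii) at $\alpha$, a cube root of $\gamma$ in $u(u)$ would be excluded by comparing the $u$-adic coordinates, using the inductive ii) for $\beta_i\ge\alpha>\alpha'$. Above $u(u)$ the tower has degree $2$, so Lemma~\ref{pol-grad3-ext-grad2} finishes. Then v) follows from ii) together with the fact that all smaller cubics (constants $<\tau+\alpha$) do have cube roots, which we obtain since each such $\gamma$ is either in $\tau$ or already handled earlier. For iii) and iv) at $\beta\ge\alpha$, we apply Lemma~\ref{L-general-lema-1} on the tower from $u(u)$; its hypothesis (no nonzero root of $P_\beta$) is again ii). At the first step from $u$ to $u^3$ we need a cubic analogue showing that $L_\beta(u)$ is an initial segment of $L_\beta(u^3)$, which gives iii). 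Lemma~\ref{lema-B} and Lemma~\ref{lema-epsilon} then give $\otype=\varepsilon_\alpha$.

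Limit $\alpha$ is handled by taking suprema: quadratic closedness and ii) pass to unions, and iii) for the earlier indices with Lemma~\ref{lema-A} gives iv). For v), cube roots of all $\tau\oplus\alpha''$ with $\alpha''<\alpha$ appear below, so the least irreducible polynomial is $X\boxedexp{3}\oplus(\tau\oplus\alpha)$.

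The main obstacle is the successor step: computing $L(u^3)$ and $L_\beta(u^3)$ precisely enough, via the Frobenius-twisted coordinate calculation in the cubic extension, to pin the order type between $\varepsilon_{\alpha'}$ and $\varepsilon_{\alpha'+1}$. We would first try to mimic the proof of Lemma~\ref{L-general-lema-1} with the basis $1,u,u\boxedexp{2}$.
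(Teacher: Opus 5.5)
Your proposal is correct and follows the paper's proof closely. It uses the same joint induction, the same base case (Section~\ref{sec: 3}, Lemma~\ref{pol-grad3-ext-grad2}, Remark~\ref{obs-l-general-tau}, Lemma~\ref{lema-B}), and in the successor step the same coordinate computation in the basis $1,\varepsilon_{\alpha'},\varepsilon_{\alpha'}\boxedexp{2}$ that reduces $P$ to $P_{\alpha'}$. The paper proves the exact equality $L(\varepsilon_{\alpha'}(\varepsilon_{\alpha'}))=\{\varepsilon_{\alpha'}\times\lambda\mid\lambda\in L_{\alpha'}(\varepsilon_{\alpha'})\}$, but your one-sided bound suffices: the inclusion $\supseteq$ gives $\otype\geq\varepsilon_{\alpha'}$, and the bound $<\varepsilon_{\alpha'+1}$ is automatic from $L(\varepsilon_{\alpha'}(\varepsilon_{\alpha'}))\subset\varepsilon_{\alpha'}^3$. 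The ``cubic analogue'' you leave open for iii) falls out of the same computation, because the constant coordinate of $P_\beta(\varepsilon_{\alpha'}\boxedexp{2}\otimes a\oplus\varepsilon_{\alpha'}\otimes b\oplus c)$ is exactly $P_\beta(c)$.
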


	\begin{proof}
		We prove by induction on $\alpha$.
		
		\underline{For $\alpha = 0$:} 
		
		\begin{enumerate} [i)]
			\item It is obviously true.

			\item Let $\gamma \in \{ (\tau \oplus \beta_1)\boxedexp{m_1} \otimes \dots \otimes (\tau \oplus \beta_n)\boxedexp{m_n} \mid n >0, \ m_1, \dots, m_n \in \mathbb{Z} \text{ not all divisible by 3}, \ \tau >  \beta_1, \dots \beta_n \geq 0  \}$. From Proposition~\ref{inch-perfecta-ord-corp-alg-inchis}, Theorem~\ref{claim-inch-patr} and Lemma~\ref{pol-grad3-ext-grad2}, it is enough to prove that the polynomial $X\boxedexp{3} \oplus \gamma  $ has no roots in $\tau(\tau)$, which is obvious.
			\item There is nothing to prove.
			
			\item Let $\beta < \tau $.
			We prove that $L_{\beta}(\tau(\tau)) \neq \emptyset$.
			
			If, by contradiction, there exist $p, q \in \tau[X]$ such that $(p,q)=1$, $ q \neq 0$ monic and $P_{\beta}(p(\tau) \oslash q(\tau)) \oplus \tau \boxedexp{2} =0  $, then: $$(\tau \oplus \beta) \otimes p(\tau)\boxedexp{4} \oplus p(\tau) \otimes q(\tau)\boxedexp{3} \oplus \tau \boxedexp{2} \otimes q(\tau) \boxedexp{4} = 0.$$
			We have that $(X \oplus \beta) \otimes p \boxedexp{4} \oplus p \otimes q \boxedexp{3} \oplus X \boxedexp{2} \otimes q \boxedexp{4} =0 $. Thus $ q \boxedexp{3} \mid (X \oplus \beta) \otimes p \boxedexp{4} $, $(p,q) = 1$ from which it follows that $q \boxedexp{3} \mid (X \oplus \beta) $, so $q=1$. We have that $(X \oplus \beta) \otimes p \boxedexp{4} \oplus p \oplus X \boxedexp{2}  = 0 $. If $p$ is constant, then $(X \oplus \beta) \otimes p \boxedexp{4} \oplus p \oplus X \boxedexp{2}$ has degree 2, and if $\deg(p)>0$, then $(X \oplus \beta) \otimes p \boxedexp{4} \oplus p \oplus X \boxedexp{2}$ has degree $4 \deg(p) + 1$, a contradiction. Thus $\tau^2 \in \tau(\tau) \setminus P_{\beta}(\tau(\tau))$. From Proposition~\ref{minim-L-general} it follows that $ L_{\beta}(\tau(\tau)) \neq \emptyset$. From Remark~\ref{obs-l-general-tau} it follows that $ L_{\beta}(\tau^{\tau \times \omega}) \neq \emptyset $. Consider the sequence of fields $(u_{\eta})_{\eta \leq \varepsilon_0}$ such that $u_0 = \tau^{\tau \times \omega}$, $ u_{\eta + 1} = u_{\eta}(u_{\eta}) $, for any $\eta \leq \varepsilon_0$, $ u_{\eta} = \sup \{ u_{\delta} \mid \delta < \eta \} $, if $\eta$ is a limit ordinal. We know from Theorem~\ref{claim-inch-patr} and Remark~\ref{obs-tau-inch-patr}, that $ u_{\varepsilon_0} = \varepsilon_0 $. From Lemma~\ref{lema-B} and from Lemma~\ref{lema-epsilon}, it follows that $ \otype(L_{\beta}(\varepsilon_0)) = \otype(L_{\beta}(u_{\varepsilon_0})) = \otype{L_{\beta}(u_{0}}) \times 2^{\varepsilon_0} = \varepsilon_0$.

			\item It follows from Remark~\ref{obs-tau-radical-de-ordin-3}.

		\end{enumerate}
		
		\underline{For $\alpha^+$:}
		
		Let the sequence of fields $(v_{\eta})_{\eta \leq \varepsilon_{\alpha+1} }$, such that $v_0 = \varepsilon_\alpha(\varepsilon_\alpha)$, $ v_{\eta + 1} = v_{\eta}(v_{\eta}) $, for any $\eta<\varepsilon_{\alpha+1}$, $ v_{\eta} = \sup \{ v_{\delta} \mid \delta < \eta \} $, if $\eta \leq \varepsilon_{\alpha+1}$ is a limit ordinal. 
		
		\begin{enumerate}[i)]
			\item We prove that $L(\varepsilon_{\alpha}(\varepsilon_{\alpha}))  = \{ \varepsilon_{\alpha} \times \lambda \mid \lambda \in L_{\alpha}(\varepsilon_{\alpha})  \} $.

			Let $\lambda \in L(\varepsilon_{\alpha}(\varepsilon_{\alpha}))$. There exist $ x, y, z < \varepsilon_{\alpha} $ such that $\lambda = \varepsilon_{\alpha}^2 \times x + \varepsilon_{\alpha} \times y + z$. Since $\lambda > 0$ is a group, it follows that exactly one of $x, y,z$ is nonzero. If $z >0$, then $\lambda < \varepsilon_{\alpha} $, so $\lambda 
			\in P[\varepsilon_{\alpha}] $, a contradiction.
			
			If $y>0$ and $y \notin L_{\alpha}(\varepsilon_{\alpha}) $ it follows that there exist $y'<y$, $b<\varepsilon_{\alpha}$ such that $y \oplus y' = P_{\alpha}(b)$. Then $\lambda \oplus (\varepsilon_{\alpha} \otimes y') = P(\varepsilon_{\alpha} \boxedexp{2} \otimes b\boxedexp{2} \oplus \varepsilon_{\alpha} \otimes b )$ and $\varepsilon_{\alpha} \otimes y' < \lambda$, a contradiction with $\lambda \in L(\varepsilon_{\alpha}(\varepsilon_{\alpha}))$.
			
			If $x>0$, then $\lambda \oplus  (\varepsilon_{\alpha} \otimes (\tau \oplus \alpha) \otimes x \boxedexp{2}) = P(\varepsilon_{\alpha} \boxedexp{2} \otimes x)    $ and $ (\tau \oplus \alpha) \otimes x \boxedexp{2} < \varepsilon_{\alpha} $ , so  $\varepsilon_{\alpha} \otimes (\tau \oplus \alpha) \otimes x \boxedexp{2} < \lambda$, a contradiction with $\lambda \in L(\varepsilon_{\alpha}(\varepsilon_{\alpha}))$.
			
			Thus $L(\varepsilon_{\alpha}(\varepsilon_{\alpha}))  \subset \{ \varepsilon_{\alpha} \times \lambda \mid \lambda \in L_{\alpha}(\varepsilon_{\alpha})  \} $.
			
			Let $ y \in  L_{\alpha}(\varepsilon_{\alpha}) $. Suppose that $ \lambda = \varepsilon_{\alpha} \times y \notin L(\varepsilon_{\alpha}(\varepsilon_{\alpha})) $. Then, since $\lambda$ is a group, from Lemma~\ref{lema-produs-grupuri}, there exist $s<\lambda$, $\beta<\varepsilon_{\alpha}^3$ such that $P(\beta) =\lambda \oplus s $. There exist $ y'<y $, $ v<\varepsilon_{\alpha} $, $a,b,c < \varepsilon_{\alpha}$ such that $s= \varepsilon_{\alpha} \times y' + v$ and $ \beta = \varepsilon_{\alpha}^2 \times a + \varepsilon_{\alpha} \times b + c $. Then, from the relation $ P(\beta) =\lambda \oplus s  $ it follows that $ 0 = b\boxedexp{2} \oplus a$ and $ y \oplus  y' = (\tau \oplus \alpha) \otimes a\boxedexp{2} \oplus b $, so $P_{\alpha}(b) = y \oplus y'$, a contradiction with $y \in L_{\alpha}(\varepsilon_{\alpha})$. 
			
			Thus $L(\varepsilon_{\alpha}(\varepsilon_{\alpha}))  \supset \{ \varepsilon_{\alpha} \times \lambda \mid \lambda \in L_{\alpha}(\varepsilon_{\alpha})  \} $.

			By induction, $\otype(L_{\alpha}(\varepsilon_{\alpha})) = \varepsilon_{\alpha} $, so $\otype(L(\varepsilon_{\alpha}(\varepsilon_{\alpha}))) = \varepsilon_{\alpha}$. From Theorem~\ref{claim-inch-patr}, since $ \varepsilon_{\alpha}(\varepsilon_{\alpha}) = \varepsilon_\alpha^3 $ is a perfect field, it follows that $(\varepsilon_{\alpha}^3)^{\varepsilon_{\alpha+1}}$ is the quadratic closure of $\varepsilon_{\alpha}(\varepsilon_{\alpha})$. But, from Lemma~\ref{lema-epsilon}, $(\varepsilon_{\alpha}^3)^{\varepsilon_{\alpha+1}} = \varepsilon_{\alpha}^{\varepsilon_{\alpha+1}} = 2^{\varepsilon_{\alpha} \times \varepsilon_{\alpha+1}} = 2^{\varepsilon_{\alpha+1}} = \varepsilon_{\alpha+1}$. Thus $\varepsilon_{\alpha+1}$ is the $(
		\alpha+1)$-th quadratically closed field ordinal after $\varepsilon_0$. 
			
			In particular, $v_{\varepsilon_{\alpha+1}}= \varepsilon_{\alpha+1}$.
			
			\item Let $\gamma \in \{ (\tau \oplus \beta_1)\boxedexp{m_1} \otimes \dots \otimes (\tau \oplus \beta_n)\boxedexp{m_n} \mid n >0, \ m_1, \dots, m_n \in \mathbb{Z} \text{ not all divisible by 3}, \ \tau > \beta_1, \dots, \beta_n > \alpha  \}$.
			
			From Lemma~\ref{pol-grad3-ext-grad2} applied to $(v_{\eta})_{\eta \leq \varepsilon_{\alpha+1} }$, it is enough to prove that $X \boxedexp{3} \oplus \gamma $ has no roots in $ \varepsilon_\alpha(\varepsilon_\alpha) $. Suppose towards a contradiction that there exist $ a,b,c < \varepsilon_\alpha $, such that $ ( \varepsilon_{\alpha} \boxedexp{2} \otimes a \oplus \varepsilon_\alpha \otimes b \oplus c) \boxedexp{3} = \gamma $. Then:
			$$  b\boxedexp{2} \otimes c \oplus (\tau \oplus \alpha)\otimes b \otimes a \boxedexp{2} \oplus a \otimes c\boxedexp{2} =0;   $$
			$$   (\tau \oplus \alpha) \otimes b \boxedexp{2} \otimes a \oplus (\tau \oplus \alpha) \otimes a \boxedexp{2} \otimes c \oplus b \otimes c \boxedexp{2} =0;    $$
			$$  (\tau \oplus \alpha) \otimes b \boxedexp{3} \oplus (\tau \oplus \alpha ) \boxedexp{2} \otimes a \boxedexp{3} \oplus c \boxedexp{3} = \gamma. $$
			
			Multiplying the first relation by $b$, the second by $a$ and adding them, it follows that:
			$$ c \otimes (b \boxedexp{3} \oplus (\tau \oplus \alpha) \otimes a \boxedexp{3}) =0. $$ 
			If $c \neq 0$,
			since $ X \boxedexp{3} \oplus (\tau \oplus \alpha) $ has no roots in $\varepsilon_\alpha$, by induction, then, since $b \boxedexp{3} \oplus (\tau \oplus \alpha) \otimes a \boxedexp{3} = 0$, we have $a=b=0$, 
			so $c \boxedexp{3 } = \gamma$, contradiction with ii) from the induction hypothesis for $\alpha$.
			
			Thus $c =0 $, from which it follows from the first relation that $a\otimes b =0$.
			If $a=0$, then $b \boxedexp{3} = \gamma \oslash (\tau \oplus \alpha)$, contradiction with ii) from the induction hypothesis for $\alpha$.
			If $b=0$, then $a \boxedexp{3} = \gamma \oslash (\tau \oplus \alpha)\boxedexp{2}$, contradiction with ii) from the induction hypothesis for $\alpha$. 
			
			In particular, for all $ \alpha < \beta < \tau $, $P_{\beta}$ has no nonzero roots in $\varepsilon_{\alpha+1}$. 
			
			\item Let $\tau > \beta > \alpha $. 
			From Lemma~\ref{lema-B} applied to $(v_{\eta})_{\eta \leq \varepsilon_{\alpha+1} }$, it is enough to prove that $L_{\beta}(\varepsilon_\alpha)$ is an initial segment for $L_{\beta}(\varepsilon_\alpha(\varepsilon_\alpha))$. 
			
			Let $r \in L_{\beta}(\varepsilon_\alpha) $. Then $r$ is a group. Suppose there exist $a,b,c < \varepsilon_{\alpha} $, $r'<r$ such that $ P_{\beta} (\varepsilon_\alpha \boxedexp{2} \otimes a \oplus \varepsilon_\alpha \otimes b \oplus c) = r \oplus r' $. Then, it follows that $P_{\beta}(c)=r \oplus r'$, contradiction with $r \in L_{\beta}(\varepsilon_\alpha) $. Thus $L_{\beta}(\varepsilon_\alpha) \subset L_{\beta}(\varepsilon_\alpha(\varepsilon_\alpha))$.
			
			Let $r \in L_{\beta}(\varepsilon_\alpha(\varepsilon_\alpha)) \setminus L_{\beta}(\varepsilon_\alpha) $. Obviously $r \geq \varepsilon_{\alpha}$ and since $L_{\beta}(\varepsilon_\alpha)  \subset \varepsilon_\alpha$, it follows that $L_{\beta}(\varepsilon_\alpha)$ is an initial segment for $L_{\beta}(\varepsilon_\alpha(\varepsilon_\alpha))$.

			\item Let $\tau > \beta > \alpha $. 
			From iii), we have that $L_{\beta}(\varepsilon_\alpha(\varepsilon_\alpha)) \neq \emptyset$.

		 From Lemma~\ref{lema-B} applied to $(v_{\eta})_{\eta \leq \varepsilon_{\alpha+1} }$, and from Lemma~\ref{lema-epsilon} it follows that $\otype(L_{\beta}(v_{\varepsilon_{\alpha+1}})) = \otype{L_{\beta}(v_{0}}) \times 2^{\varepsilon_{\alpha+1}} = \varepsilon_{\alpha+1}$.
			
			\item Obvious from i), ii) and Theorem~\ref{set} g).

		\end{enumerate}
		
		\underline{For $\alpha$ a limit ordinal:}
		
		\begin{enumerate}[i)]
			\item From i) from the induction hypothesis, it follows that $ \varepsilon_\alpha $ is the $\gamma$-th quadratically closed field ordinal after $ \varepsilon_0 $, where $ \gamma \geq \alpha $. If $ \chi $ is the $\alpha$-th quadratically closed field ordinal after $ \varepsilon_0 $ and $ \chi < \varepsilon_\alpha $, then there exists $\beta<\alpha$ such that $ \chi < \varepsilon_{\beta} $, from which it would follow that $ \alpha < \beta $, a contradiction. Thus $\gamma = \alpha$.

			\item Let $\gamma \in \{ (\tau \oplus \beta_1)\boxedexp{m_1} \otimes \dots \otimes (\tau \oplus \beta_n)\boxedexp{m_n} \mid n >0, \ m_1, \dots, m_n \in \mathbb{Z} \text{ not all divisible by 3}, \ \tau > \beta_1, \dots, \beta_n \geq \alpha  \}$. From ii) from the induction hypothesis, we know that
			the polynomial $X\boxedexp{3} \oplus \gamma $ has no roots in $\varepsilon_\beta$, for any $\beta < \alpha$. It follows that the polynomial $X\boxedexp{3} \oplus \gamma $ has no roots in $\varepsilon_\alpha$.
			
			\item There is nothing to prove.

			\item Let $ \tau > \beta \geq \alpha $. From iv) from the induction hypothesis we know that for any $\eta < \alpha$, $\otype(L_{\beta}(\varepsilon_{\eta})) = \varepsilon_{\eta}$. From iii) from the induction step, and Lemma~\ref{lema-A}, it follows that $ \otype(L_{\beta}(\varepsilon_\alpha)) = \sup \{ \otype(L_{\beta}(\varepsilon_{\eta})) \mid \eta < \alpha \} $. Thus $\otype(L_{\beta}(\varepsilon_\alpha)) = \varepsilon_\alpha$.
			
			\item Obvious from i), ii) and Theorem~\ref{set} g).
			
		\end{enumerate}		
	\end{proof}

	From Theorem~\ref{eps-corpuri-patratic-inchise} it follows that $ \varepsilon_\tau $ is a quadratically closed field.
	
	In the following, we will find the smallest irreducible polynomial over $\varepsilon_\tau$.

	\begin{lemma} \label{lema-eps-tau-rel-1}
		For any $\alpha < \tau ^{\tau \times \omega}$, there exists $ \beta \in \varepsilon_\tau $ such that $\beta \boxedexp{3} = \alpha $.
	\end{lemma}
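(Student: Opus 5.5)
The plan is to reduce in two steps: first from the perfect closure $\tau^{\tau\times\omega}$ down to $\tau(\tau)$ using Frobenius, and then from $\tau(\tau)$ to the linear elements $\tau\oplus\beta$. For those, Theorem~\ref{eps-corpuri-patratic-inchise} v) already supplies cube roots. Throughout, I use that $\varepsilon_\tau$ is a quadratically closed field (noted right after Theorem~\ref{eps-corpuri-patratic-inchise}). In particular it is perfect, so every element of $\varepsilon_\tau$ has a square root in $\varepsilon_\tau$.

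\emph{Step 1 (reduction to $\tau(\tau)$).} Let $\alpha<\tau^{\tau\times\omega}$. By Proposition~\ref{inch-perfecta-ord-corp-alg-inchis}, $\tau^{\tau\times\omega}$ is the perfect closure of $\tau(\tau)$. By Proposition~\ref{corolar-pur-insep}, or directly from the description $\tau^{\tau\times\omega}=\sup_i \tau^{\tau\times 2^i}$ with $(\tau^{\tau\times 2^i})\boxedexp{2}=\tau^{\tau\times 2^{i-1}}$, there is $n\in\omega$ with $\gamma:=\alpha\boxedexp{2^n}\in\tau(\tau)$. Suppose we find $\delta\in\varepsilon_\tau$ with $\delta\boxedexp{3}=\gamma$. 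Since $\varepsilon_\tau$ is perfect, there is $\beta\in\varepsilon_\tau$ with $\beta\boxedexp{2^n}=\delta$. Then
\[
(\beta\boxedexp{3})\boxedexp{2^n}=\delta\boxedexp{3}=\gamma=\alpha\boxedexp{2^n}.
\]
The Frobenius map $x\mapsto x\boxedexp{2}$ is injective in characteristic $2$, so $\beta\boxedexp{3}=\alpha$.

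\emph{Step 2 (reduction to linear factors).} Now let $\gamma\in\tau(\tau)$, so $\gamma=p(\tau)\oslash q(\tau)$ with $p,q\in\tau[X]$ and $q\neq0$. If $p=0$, take the root $0$. Otherwise, since $\tau$ is algebraically closed (Theorem~\ref{primul-transcendent}), both $p$ and $q$ split over $\tau$. Hence
\[
\gamma=c\otimes\bigotimes_{i=1}^{k}(\tau\oplus\beta_i)\boxedexp{m_i}
\]
with $0\neq c<\tau$, $\beta_i<\tau$ and $m_i\in\mathbb{Z}$. The constant $c$ has a cube root in $\tau\subset\varepsilon_\tau$. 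For each $\beta_i<\tau$, Theorem~\ref{eps-corpuri-patratic-inchise} v) gives $\varepsilon_{\beta_i}\boxedexp{3}=\tau\oplus\beta_i$, and $\varepsilon_{\beta_i}<\varepsilon_\tau$. Since $\varepsilon_\tau$ is a field, it contains the product
\[
\delta:=c^{1/3}\otimes\bigotimes_{i=1}^{k}\varepsilon_{\beta_i}\boxedexp{m_i},
\]
where negative exponents mean Nim inverses. By commutativity, $\delta\boxedexp{3}=\gamma$, which completes Step 1 and hence the proof.

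No step is really an obstacle once Theorem~\ref{eps-corpuri-patratic-inchise} v) is available. The only points to check carefully are that $\alpha\boxedexp{2^n}$ actually lands in $\tau(\tau)$, which is where the perfect-closure description is used, and that $\varepsilon_\tau$ is perfect, so that square roots can be extracted inside $\varepsilon_\tau$ in Step 1.
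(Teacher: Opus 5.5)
Your proof is correct and follows the paper's argument. The paper likewise obtains cube roots on $\tau(\tau)$ from Theorem~\ref{eps-corpuri-patratic-inchise} v), with you making explicit the splitting into linear factors $\tau\oplus\beta_i$ over the algebraically closed $\tau$, which the paper leaves implicit. It then passes to $\tau^{\tau\times\omega}$ through pure inseparability ($\alpha\boxedexp{2^n}\in\tau(\tau)$), extracts a $2^n$-th root in the quadratically closed $\varepsilon_\tau$, and concludes by injectivity of Frobenius, exactly as you do.
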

	
	\begin{proof}
		For $\alpha < \tau$ it is obvious. For $\alpha = \tau + x$, $x < \tau$ it is true from Theorem~\ref{eps-corpuri-patratic-inchise}. Since $\tau ^\tau = \tau(\tau)$, it follows that the lemma is true for any $\alpha \in \tau ^ \tau$. 
		
		Since $\tau ^ \tau \hookrightarrow \tau^{\tau \times \omega}$ is purely inseparable by Proposition~\ref{corolar-pur-insep} and Proposition~\ref{inch-perfecta-ord-corp-alg-inchis}, then from Proposition~\ref{pur-insep} it follows that for any $ \alpha < \tau^{\tau \times \omega} $ there exists $n \in \omega$ such that $\alpha \boxedexp{2^n} \in \tau^\tau$. Thus there exists $\beta \in \varepsilon_\tau $ such that $ \beta \boxedexp{3} =  \alpha \boxedexp{2^n}$. Since $\varepsilon_\tau$ is quadratically closed, there exists $ \gamma \in \varepsilon_\tau $ such that $\gamma \boxedexp{2^n} = \beta$, so $\gamma \boxedexp{3} = \alpha$. 
	\end{proof}
	
	\begin{lemma} \label{lema-eps-tau-rel-2}
		There is no $a \in \varepsilon_\tau$ such that $a \boxedexp{3} = \tau ^{\tau \times \omega}$.
	\end{lemma}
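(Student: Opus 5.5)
The plan is a Kummer-theoretic bookkeeping argument. We track which elements of the fixed field $u_1 := \sigma(\sigma)$ become cubes along the tower that builds $\varepsilon_\tau$, where $\sigma := \tau^{\tau\times\omega}$. We then check that $\sigma$ is never among them.

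By Proposition~\ref{rel-inch-perf}, $\sigma$ is a root of $X\boxedexp{2}\oplus X\oplus\tau$ over $u_0=\sigma$. Hence $u_1=\sigma(\sigma)=\sigma^2$ is a quadratic extension of $u_0$ containing the element $\sigma$. By Theorem~\ref{claim-inch-patr}, Remark~\ref{obs-tau-inch-patr} and Theorem~\ref{eps-corpuri-patratic-inchise}, $\varepsilon_\tau$ is the top of a transfinite tower $(K_\eta)$ starting at $u_1$, with unions at limits. Each successor step is of one of two kinds:
\begin{itemize}
\item a quadratic step $K\subset K(K)$, as in the chains $u_\eta$ and $v_\eta$;
\item a cubic step $\varepsilon_\alpha\subset\varepsilon_\alpha(\varepsilon_\alpha)$, where $\varepsilon_\alpha\boxedexp{3}=\tau\oplus\alpha$ by Theorem~\ref{eps-corpuri-patratic-inchise} v).
\end{itemize}
The field contains the primitive cube roots of unity $2,3$.

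The key claim, proved by induction along the tower, is the following. For every $K$ in the tower, the kernel of $u_1^{*}/u_1^{*3}\to K^{*}/K^{*3}$ is generated by the classes of the $\tau\oplus\beta$ with $\beta$ below the current index. The three cases are:
\begin{itemize}
\item Quadratic step $K\subset L$. If $x\in K$ and $x=y\boxedexp{3}$ with $y\in L$, then taking norms gives $x\boxedexp{2}=N_{L/K}(y)\boxedexp{3}$. Hence $x=x\boxedexp{3}\oslash x\boxedexp{2}$ is a cube in $K$, so the kernel does not grow.
\item Cubic Kummer step $L=K(c)$ with $c\boxedexp{3}=\tau\oplus\alpha$. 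Standard Kummer theory says the elements of $K^*$ that become cubes in $L$ are exactly $K^{*3}\langle\tau\oplus\alpha\rangle$.
\item Limit steps. These are immediate, since any cube root lies at an earlier stage.
\end{itemize}
Consequently, if $\sigma=a\boxedexp{3}$ with $a\in\varepsilon_\tau$, then $a\in\varepsilon_\alpha$ for some $\alpha<\tau$. It follows that
\[
\sigma\otimes r \text{ is a cube in } u_1, \qquad r=\bigotimes_i(\tau\oplus\beta_i)\boxedexp{m_i}\in\tau(\tau),\ m_i\in\mathbb{Z}.
\]

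It remains to rule this out in $u_1$. First apply the Galois conjugation $\sigma\mapsto\sigma\oplus1$ of $u_1/u_0$. This shows $(\sigma\oplus1)\otimes r$ is also a cube. Multiplying and using $\sigma\otimes(\sigma\oplus1)=\tau$ gives that $\tau\otimes r\boxedexp{2}$ is a cube in $u_1$. By the norm argument above, $\tau\otimes r\boxedexp{2}$ is then a cube in $u_0=\sigma$.

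Since $\tau(\tau)\subset\sigma$ is purely inseparable (Proposition~\ref{corolar-pur-insep}), some $2^n$-th power of it is a cube in $\tau(\tau)\cong\tau(X)$. Comparing $(X\oplus\beta)$-adic valuations shows that $1+2m_0\equiv0$ and $2m_i\equiv 0 \pmod 3$ for $\beta_i\neq0$, where $m_0$ is the exponent of $\tau$ in $r$. Thus, up to cubes, $r=\tau\boxedexp{m_0}$ with $m_0\equiv1 \pmod 3$. Dividing by the cube $\tau\boxedexp{m_0-1}$ reduces us to showing that $\sigma\otimes\tau=\sigma\boxedexp{2}\otimes(\sigma\oplus1)$ is not a cube in $u_1$. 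Equivalently, since $\sigma\boxedexp{3}$ is a cube, $(\sigma\oplus1)\oslash\sigma$ is not a cube in $u_1$.

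This final step is the main obstacle, because the norm trick gives no information here: the norm $\tau\boxedexp{3}$ of $\sigma\otimes\tau$ is already a cube. I would attack it directly. Write a putative cube root as $y=x\otimes\sigma\oplus z$ with $x,z\in\sigma$. Expand $y\boxedexp{3}$ using $\sigma\boxedexp{2}=\sigma\oplus\tau$, and compare the two coordinates in the basis $\{1,\sigma\}$. This yields two polynomial equations in $x,z$ over the perfect closure of $\tau(\tau)$. After raising to a $2^n$-th power, these equations live in $\tau(X)$ with $X=\tau$ transcendental over $\tau$. The argument should then be finished by a degree or valuation count, in the style of the proofs of Proposition~\ref{rel-inch-perf} and Proposition~\ref{epsilon^3}. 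If the direct computation is unwieldy, the fallback is a different valuation, for instance a place of $u_1$ lying over the pole of $X$, under which the units $\sigma$ and $\sigma\oplus1$ have valuations that are not congruent mod $3$.
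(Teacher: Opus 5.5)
You have a genuine gap: the one step that carries the content of the lemma is left unproved.

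Everything up to that step is correct. Your Kummer-theoretic bookkeeping handles the tower more cleanly than the paper does. The norm argument disposes of every quadratic step. Kummer theory, with $2,3\in\mathbb{F}_4=\{0,1,2,3\}$ as the cube roots of unity, disposes of every cubic step $\varepsilon_\alpha\subset\varepsilon_\alpha(\varepsilon_\alpha)$. The conjugation $\sigma\mapsto\sigma\oplus1$ and the $(X\oplus\beta)$-adic count then correctly reduce the lemma to a single claim: $\sigma\otimes\tau$ is not a cube in $u_1=\sigma(\sigma)$. The paper instead inducts on $\alpha\le\tau$ over the coset $\{\sigma\otimes\eta\mid\eta\in\tau(\tau)\setminus\{0\}\}$. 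It redoes the three coordinate equations of Theorem~\ref{eps-corpuri-patratic-inchise}~ii) at each cubic step and uses Lemma~\ref{pol-grad3-ext-grad2} for the quadratic steps.

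That single claim is exactly the base case of the paper's proof, and you have not proved it. Saying the coordinate equations \emph{should} yield to a degree or valuation count is a plan, not an argument. Your fallback also fails. Take any place of $u_1$ over the pole of $X=\tau$, normalized so that $v(\tau)=-1$; the value group on these perfect fields is $\mathbb{Z}[1/2]$. The relation $\sigma\boxedexp{2}\oplus\sigma=\tau$ forces $v(\sigma)=v(\sigma\oplus1)=-1/2$, so the two valuations are equal, not incongruent mod $3$. Moreover $(\sigma\oplus1)\oslash\sigma=1\oplus(1\oslash\sigma)$ is a principal unit. By Hensel's lemma (with $3$ invertible in characteristic $2$) it is a cube in the completion, so that place gives no obstruction at all.

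The paper fills the missing step as follows. Write $a=\sigma\otimes x\oplus y$ with $x,y\in\sigma$. Using $\sigma\boxedexp{2}=\sigma\oplus\tau$ and $\sigma\boxedexp{3}=(\tau\oplus1)\otimes\sigma\oplus\tau$, the $1$-coordinate of $a\boxedexp{3}$ is $\tau\otimes x\boxedexp{3}\oplus\tau\otimes x\boxedexp{2}\otimes y\oplus y\boxedexp{3}$. This must vanish if $a\boxedexp{3}=\sigma\otimes\eta$ with $\eta\in\sigma$, and only this coordinate is needed.
\begin{itemize}
\item If $y=0$, then $x=0$.
\item If $y\neq0$, then $c=x\oslash y$ is a root of $\tau\otimes T\boxedexp{3}\oplus\tau\otimes T\boxedexp{2}\oplus1$. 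This polynomial has no root in $\tau(\tau)$: writing $c=p(\tau)\oslash q(\tau)$ with $p,q$ coprime and $q$ monic forces $q=X$ and then $X\mid p$. By Lemma~\ref{pol-grad3-ext-grad2} it has no root in $\sigma$ either.
\end{itemize}
So no $\sigma\otimes\eta$ with $\eta\neq0$ is a cube in $u_1$. This also makes your conjugation and $(X\oplus\beta)$-adic steps unnecessary.

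If you prefer a valuation argument, use a place over the zero of $X$, not its pole. Since $\tau=\sigma\boxedexp{2}\oplus\sigma$, the field $u_1$ is the perfect closure of $\tau(\sigma)$, with $\sigma$ transcendental over $\tau$. Then $\sigma\otimes\tau=\sigma\boxedexp{2}\otimes(\sigma\oplus1)$ has $\sigma$-adic valuation $2\notin3\,\mathbb{Z}[1/2]$, so it is not a cube.
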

	
	\begin{proof}
		We prove by induction on $\alpha \leq \tau $ that for no $\eta \in \tau(\tau) \setminus \{0\}  $ there exists $ a \in \varepsilon_\alpha $ such that $ a \boxedexp{3} = \tau^{\tau \times \omega } \otimes \eta $.
		
		For the base case $\alpha = 0$, 
		we prove that for no $\eta \in \tau(\tau) \setminus \{0\}  $ there exists $ a \in \tau^{\tau \times \omega }(\tau^{\tau \times \omega }) $ such that $ a \boxedexp{3} = \tau^{\tau \times \omega } \otimes \eta $. Let $\eta \in \tau(\tau) \setminus \{0\}  $. Assume there exists $ a \in \tau^{\tau \times \omega }(\tau^{\tau \times \omega }) $ such that $ a \boxedexp{3} = \tau^{\tau \times \omega } \otimes \eta $.
		Then $ a = \tau^{\tau \times \omega} \times x +y $, where $x, y < \tau^{\tau \times \omega}$. From Proposition~\ref{rel-inch-perf} we know that $ (\tau^{\tau \times \omega}) \boxedexp{2} \oplus \tau^{\tau \times \omega} \oplus \tau =0 $. Thus $a \boxedexp{3} = (\tau^{\tau \times \omega}) \boxedexp{3} \otimes x \boxedexp{3} \oplus (\tau^{\tau \times \omega}) \boxedexp{2} \otimes x \boxedexp{2} \otimes y \oplus \tau^{\tau \times \omega} \otimes x \otimes y \boxedexp{2} \oplus y \boxedexp{3} = \tau^{\tau \times \omega} \otimes \eta $, from which it follows that $ \tau^{\tau \times \omega} \otimes ( (\tau \oplus 1) \otimes x \boxedexp{3} \oplus x \boxedexp{2} \otimes y \oplus x \otimes y \boxedexp{2} ) \oplus (\tau \otimes x \boxedexp{3} \oplus \tau \otimes x \boxedexp{2} \otimes y \oplus y \boxedexp{3}) = \tau^{\tau \times \omega} \otimes \eta $.
		
		So $\tau \otimes x \boxedexp{3} \oplus \tau \otimes x \boxedexp{2} \otimes y \oplus y \boxedexp{3} = 0$. If $y = 0$, then $x=0$, so $a=0$, a contradiction. If $y >0$, then $\tau \otimes c \boxedexp{3} \oplus \tau \otimes c \boxedexp{2} \oplus 1 =0$, where $c = x \oslash y \in \tau^{\tau \times \omega}$.
		
		Assume there exist $ p, q \in \tau[X] $, $q$ monic, $(p,q)=1$ such that $\tau \otimes (p(\tau)\oslash q(\tau)) \boxedexp{3} \oplus \tau \otimes (p(\tau)\oslash q(\tau)) \boxedexp{2} \oplus 1 =0 $. Then $ X \otimes p \boxedexp{3} \oplus X \otimes p \boxedexp{2} \otimes q \oplus q \boxedexp{3} =0 $. Thus $X \mid q$ and $ q \mid (X \otimes p \boxedexp{3}) $, therefore $ q \mid X $. Thus $q = X$, from which it follows that $ p \boxedexp{3} \oplus X \otimes p \boxedexp{2} \oplus X \boxedexp{2} =0 $. Then $ X \mid p $, a contradiction. Thus, the polynomial $\tau \otimes T \boxedexp{3} \oplus \tau \otimes T \boxedexp{2} \oplus 1 $ has no roots in $\tau(\tau)$. 
		
		From Lemma~\ref{pol-grad3-ext-grad2} it follows that the polynomial $\tau \otimes T \boxedexp{3} \oplus \tau \otimes T \boxedexp{2} \oplus 1 $ has no roots in $ \tau^{\tau \times \omega} $, a contradiction with $c \in \tau^{\tau \times \omega}$. 
		
		Thus the polynomial $T \boxedexp{3} \oplus \tau^{\tau \times \omega} \otimes \eta$ has no roots in $ \tau^{\tau \times \omega}(\tau^{\tau \times \omega}) $. From Lemma~\ref{pol-grad3-ext-grad2} it follows that the polynomial $T \boxedexp{3} \oplus \tau^{\tau \times \omega} \otimes \eta$ has no roots in $\varepsilon_0$.
		
		We assume the induction hypothesis to be true for any $\beta \leq \alpha$ and we prove it for $\alpha^+$.
		Let $\eta \in \tau(\tau) \setminus \{0\}  $. Assume there exists $ x \in \varepsilon_{\alpha}(\varepsilon_\alpha) $ such that $ x \boxedexp{3} = \tau^{\tau \times \omega} \otimes \eta $. Then $x = \varepsilon_\alpha^2 \times a + \varepsilon_\alpha \times b + c$, where $a,b,c < \varepsilon_\alpha$. It follows that:
		$$  b\boxedexp{2} \otimes c \oplus (\tau \oplus \alpha)\otimes b \otimes a \boxedexp{2} \oplus a \otimes c\boxedexp{2} =0;   $$
		$$   (\tau \oplus \alpha) \otimes b \boxedexp{2} \otimes a \oplus (\tau \oplus \alpha) \otimes a \boxedexp{2} \otimes c \oplus b \otimes c \boxedexp{2} =0;    $$
		$$  (\tau \oplus \alpha) \otimes b \boxedexp{3} \oplus (\tau \oplus \alpha ) \boxedexp{2} \otimes a \boxedexp{3} \oplus c \boxedexp{3} = \tau^{\tau \times \omega} \otimes \eta. $$
		Multiplying the first relation by $b$, the second by $a$ and adding them, it follows that:
		$$ c \otimes (b \boxedexp{3} \oplus (\tau \oplus \alpha) \otimes a \boxedexp{3}) =0. $$

		If $c \neq 0$,
		since $ X \boxedexp{3} \oplus (\tau \oplus \alpha) $ has no roots in $\varepsilon_\alpha$, then, since $b \boxedexp{3} \oplus (\tau \oplus \alpha) \otimes a \boxedexp{3} = 0$, we have $a=b=0$, 
		so $c \boxedexp{3 } = \tau^{\tau \times \omega} \otimes \eta$, contradiction with ii) from the induction hypothesis for $\alpha$.
		
		Thus $c =0 $, from which it follows from the first relation that $a\otimes b =0$.

		If $a=0$, then $ b \boxedexp{3} = \tau^{\tau \times \omega} \otimes \eta \oslash (\tau \oplus \alpha) $, a contradiction with the induction hypothesis. If $b=0$, then $ a \boxedexp{3} = \tau^{\tau \times \omega} \otimes \eta \oslash (\tau \oplus \alpha) \boxedexp{2} $, a contradiction with the induction hypothesis. 
		
		Thus, the polynomial  $T \boxedexp{3} \oplus \tau^{\tau \times \omega} \otimes \eta$ has no roots in $\varepsilon_{\alpha}(\varepsilon_\alpha)$. From Lemma~\ref{pol-grad3-ext-grad2} and Theorem~\ref{eps-corpuri-patratic-inchise} it follows that the polynomial $T \boxedexp{3} \oplus \tau^{\tau \times \omega} \otimes \eta$ has no roots in $\varepsilon_{\alpha+1}$.
		
		The case where $\alpha$ is a limit ordinal is obvious.
	\end{proof}
	
	\begin{proposition} \label{eps_tau^3}
		We have that $\varepsilon_\tau \boxedexp{3} = \tau ^{\tau \times \omega} $.
	\end{proposition}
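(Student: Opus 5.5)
We will apply Theorem~\ref{set} g) to the field ordinal $\varepsilon_\tau$. The goal is to show that the lexicographically least polynomial over $\varepsilon_\tau$ with no Nim root in $\varepsilon_\tau$ is $X\boxedexp{3} \oplus \tau^{\tau \times \omega}$. Then $\varepsilon_\tau$ is a root of it, which is exactly the claimed identity.

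First we record what is already known. By the remark following Theorem~\ref{eps-corpuri-patratic-inchise}, $\varepsilon_\tau$ is a quadratically closed field. Hence every polynomial of degree at most $2$ over $\varepsilon_\tau$ has a root in it. Consequently the lexicographically least rootless polynomial has degree at least $3$.

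Next we compare with the degree-$3$ polynomials that precede $X\boxedexp{3} \oplus \tau^{\tau \times \omega}$ in the lexicographic order. The ordering in Theorem~\ref{set} g) compares coefficients from the top degree downwards, as in Proposition~\ref{epsilon^3}. So a monic cubic $X\boxedexp{3} \oplus a_2 \otimes X\boxedexp{2} \oplus a_1 \otimes X \oplus a_0$ is smaller than our candidate exactly when one of two things holds:
\begin{itemize}
\item $(a_2,a_1) \neq (0,0)$, and then the comparison is already decided at a higher coefficient; or
\item $a_2 = a_1 = 0$ and $a_0 < \tau^{\tau \times \omega}$.
\end{itemize}
Since the ordering has $0$ as the least value, the first case cannot occur. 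Indeed, $X\boxedexp{3} \oplus \tau^{\tau\times\omega}$ has $a_2=a_1=0$, which are the minimal possible values. So any smaller monic cubic must also have $a_2 = a_1 = 0$ and $a_0 < \tau^{\tau \times \omega}$, that is, it is $X\boxedexp{3} \oplus a_0$ with $a_0 < \tau^{\tau \times \omega}$. By Lemma~\ref{lema-eps-tau-rel-1}, each such polynomial has a root in $\varepsilon_\tau$.

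Finally, Lemma~\ref{lema-eps-tau-rel-2} states that $X\boxedexp{3} \oplus \tau^{\tau \times \omega}$ itself has no root in $\varepsilon_\tau$. In characteristic $2$ we have $-a = a$, so this is the same as saying there is no cube root of $\tau^{\tau \times \omega}$ in $\varepsilon_\tau$. Because the polynomial has degree $3$ and no root, it is irreducible over $\varepsilon_\tau$. It is therefore the lexicographically least polynomial over $\varepsilon_\tau$ with no Nim root in $\varepsilon_\tau$. Theorem~\ref{set} g) then gives $\varepsilon_\tau\boxedexp{3} = \tau^{\tau \times \omega}$.

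The main point needing care is the lexicographic bookkeeping: confirming that the convention used in Theorem~\ref{set} g) really makes every earlier cubic have $a_2=a_1=0$. This is the same convention already relied on in Proposition~\ref{epsilon^3}. Once that is settled, the heavy lifting is entirely contained in the two preceding lemmas.
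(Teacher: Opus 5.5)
Your proposal is correct and matches the paper's own one-line proof. Quadratic closedness of $\varepsilon_\tau$ and Lemma~\ref{lema-eps-tau-rel-1} handle every polynomial preceding $X\boxedexp{3} \oplus \tau^{\tau \times \omega}$, Lemma~\ref{lema-eps-tau-rel-2} shows this cubic has no root, and Theorem~\ref{set} g) concludes. One wording slip: your first bullet lists $(a_2,a_1)\neq(0,0)$ as a way for a cubic to be \emph{smaller}, but under the top-down lexicographic order such a cubic is larger, which is what you then correctly conclude.
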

	
	\begin{proof}
		It follows from the fact that $\varepsilon_\tau$ is a quadratically closed field and from Lemma~\ref{lema-eps-tau-rel-1}, Lemma~\ref{lema-eps-tau-rel-2}, Theorem~\ref{set} g).
	\end{proof}

	By Lemma~\ref{reuniune-corp-n-inchis}, we know that in the search of the next transcendental ordinal we should find the next $3$-closed field ordinal. Proposition~\ref{eps_tau^3} implies that this ordinal is strictly greater than $\varepsilon_{\omega^{\omega^{\omega}}}$.

	\section{Acknowledgements}
	
	This paper is based on my bachelor thesis, written under the supervision of Andrei Sipoș, whom I would like to thank  for introducing me to Nim arithmetic and for providing the insight for Proposition~\ref{omega-1-transc}.

\end{document}